\theoremstyle{definition}
\newtheorem{theorem}{Theorem}[section]
\newtheorem{corollary}[theorem]{Corollary}
\newtheorem{lemma}[theorem]{Lemma}
\newtheorem{proposition}[theorem]{Proposition}
\newtheorem{definition}[theorem]{Definition}
\newtheorem{notation}[theorem]{Notation}
\newtheorem{example}[theorem]{Example}
\newtheorem{remark}[theorem]{Remark}
\newtheorem{conjecture}[theorem]{Conjecture}
\newcommand{\eqnref}[1]{(\ref {#1})}
\newcommand{\Rbb}{\mathbb{R}}
\newcommand{\Vbb}{\mathbb{V}}
\newcommand{\Zbb}{\mathbb{Z}}
\newcommand{\Ccal}{\mathcal{C}}
\newcommand{\Hcal}{\mathcal{H}}
\newcommand{\Ical}{\mathcal{I}}
\newcommand{\Pcal}{\mathcal{P}}
\def\Bk{{\bf k}}
\def\BK{{\bf K}}
\def\BN{{\bf N}}
\newcommand{\Ga}{\alpha}
\newcommand{\Gvf}{\varphi}
\newcommand{\Gg}{\gamma}
\newcommand{\Gn}{\eta}
\newcommand{\Gt}{\theta}
\newcommand{\Gr}{\rho}
\newcommand{\Gy}{\psi}
\newcommand{\GD}{\Delta}
\newcommand{\GF}{\Phi}
\newcommand{\GG}{\Gamma}
\newcommand{\GL}{\Lambda}
\newcommand{\GO}{\Omega}
\newcommand{\GY}{\Psi}
\newcommand{\beqn}{\begin{equation}}
\newcommand{\eeqn}{\end{equation}}
\newcommand{\beqy}{\begin{eqnarray*}}
\newcommand{\eeqy}{\end{eqnarray*}}
\newcommand{\bey}{\begin{eqnarray}}
\newcommand{\eey}{\end{eqnarray}}
\numberwithin{equation}{section}
\def\({\Bigl(}
\def\){\Bigr)}
\def\8{\biggl(}
\def\9{\biggr)}
\def\2{\biggl\{}
\def\3{\biggr\}}
\newcommand{\tcv}{\ \widetilde{*}\ }
\newcommand{\tcp}{\ \widetilde{\circ}\ }
\def\tX{\widetilde{X}}
\def\tp{\widetilde{p}}
\def\tq{\widetilde{q}}
\newcommand{\cm}[1]{\ar@{}@<-1ex>[#1]|{\circlearrowright}}
\newcommand{\pt}{\mbox{pt}}
\newcommand{\Set}{\mathcal{S}et}
\newcommand{\Sh}{\mbox{Sh}}
\newcommand{\Vect}{\mbox{Vect}}
\newcommand{\supp}{\mbox{supp}}
\renewcommand{\SS}{\mbox{SS}}
\newcommand{\inte}{\mbox{Int}}
\newcommand{\Hom}{\mbox{Hom}}
\newcommand{\HOM}{\Hcal om}
\newcommand{\RHom}{\mbox{RHom}}
\newcommand{\RHOM}{\mbox{R}\Hcal om}
\newcommand{\Ker}{\mbox{Ker\! }}
\newcommand{\Coker}{\mbox{Coker\! }}
\renewcommand{\Im}{\mbox{Im\! }}
\newcommand{\ow}{\mbox{otherwise}}
\newcommand{\Ob}{\mbox{Ob}}
\newcommand{\id}{\mbox{id}}
\newcommand{\ctext}[1]
{\raise0.2ex\hbox{\textcircled{\scriptsize{#1}}}}
\newcommand{\indlim}[1]
{\mbox{“}\varinjlim_{#1}\hspace{-0.5em}\mbox{”}}
\newcommand{\prolim}[1]
{\mbox{“}\varprojlim_{#1}\hspace{-0.5em}\mbox{”}}
\def\ol{\overline}
\def\ul{\underline}
\def\bs{\backslash}
\newcommand{\bpape}[5]{%
	% paper{notation}{cite}{authors}{title}{vol, journal}{pages}{year}
	\bibitem [#1]{#1} #2. {\it #3}. #4. #5. 		% お好みに合わせて変えてください。
}
\newcommand{\bpaper}[6]{%
	% paper{notation}{cite}{authors}{title}{vol, journal}{pages}{year}
	\bibitem [#1]{#1} #2. {\it #3}. #4. #5. #6. 			% お好みに合わせて変えてください。
}
\newcommand{\bbook}[6]{%
	% book{notation}{cite}{authors}{title}{vol, journal}{publisher}{year}
	\bibitem [#1]{#1} #2. {\it #3}. #4. #5. #6.			% お好みに合わせて変えてください。
}
\newcommand{\barxiv}[5]{%
	% arxiv{notation}{cite}{authors}{title}{url}{url name}
	\bibitem [#1]{#1} #2. {\it #3}. Available at \href{#4}{#5}.  			% お好みに合わせて変えてください。
}
\title{An isometry theorem induced by the Radon transform between the convolution and interleaving distances}
\author{Michiaki Takiwaki}
\begin{document}

%\newpage

\maketitle

\begin{abstract}
	One-parameter persistence modules are applied to various subjects as tools in data analysis. On the other hand, since the theoretical study of multi-parameter persistence modules is not enough and in progress, they have few applications.  
	The sheaf theory is expected to elucidate detailed properties of persistence modules and give features of multi-parameter ones for applications. 
	However, the categories of sheaves on two or more dimensional Euclidean spaces have more complicated structures than those on $\Rbb$. %convolution? 
	The Radon transform for sheaves is a useful dimension reduction technique and induces a categorical equivalence between the localized bounded derived categories of sheaves on $\Rbb^n$ and those on $S^{n-1}\times\Rbb$. We show 
	We develop the convolution and the interleaving distances on these localized categories by improving original distances on the derived categories of sheaves on $\Rbb^n$ and those on $S^{n-1}\times\Rbb$. The convolution bifunctor defines these distances. 
	We show that the Radon transform changes multi-directional movements given by the convolution bifunctor to one-directional movements and induce an isometry theorem between these distances.  

\end{abstract}

\clearpage

\tableofcontents
\clearpage

\section{Introduction}
\label{Intro}\phantom{a} \vspace{-1em}
	\subsection{Related work and main result}
	\label{rwmr}\phantom{a}
		One-parameter persistence modules are applied to various subjects as tools in data analysis. On the other hand, since the theoretical study of multi-parameter persistence modules is not enough and in progress, they have few applications and require further features to describe real-world data. In order to obtain tractable features, we explore mathematical objects which correspond to multi-parameter persistence modules. 
		
		%sheaf theory and persistent homology
		There are close relations between the sheaf theory and the theory of persistence modules. For a partially ordered set $\Pcal$, Curry \cite{Cur14} found an equivalence of categories from the category of persistence modules on $\Pcal$ to the category $\Sh(\Pcal_{\mathfrak a})$ of sheaves on $\Pcal$ with the Alexandrov topology (see \cite[Definition 4.2.2]{Cur14}) for $\Pcal$. 
		For a vector space $\Vbb$ endowed with the Euclidean topology, Kashiwara and Schapira \cite{KS18} showed that a cone $\Gg$ in $\Vbb$ satisfying suitable conditions gives a partially ordered structure to $\Vbb$ and defined a functor from $\Sh(\Vbb_{\mathfrak a})$ to the category $\Sh(\Vbb_\Gg)$ of $\Gg$-sheaves which is a full subcategory of the category of sheaves on $\Vbb$. Berkouk and Petit \cite{BP21} proved that the functor used in \cite{KS18} induces an equivalence of categories from a quotient category of $\Sh(\Vbb_{\mathfrak a})$ by the category of ephemeral modules (see \cite[Definition 3.4]{BP21}) to $\Sh(\Vbb_\Gg)$. These equivalences of categories enable us to replace some problems of persistence modules with those of sheaves. In particular, multi-parameter persistence modules correspond to sheaves on $\Rbb^n\ (n\geq2)$. 
				
		%\Rbb^nと\Rbb^nの比較
		However, the bounded derived category $D^b(\Rbb^n)$ of sheaves on $\Rbb^n$ has more complicated structures than that on $\Rbb$. Every constructible sheaf on $\Rbb$ is decomposable (see \cite[Proposition 2.17.]{KS18}, \cite[Corollary 7.3.]{Gui16}), but constructible sheaves on $\Rbb^n$ may not be. Kashiwara and Schapira \cite{KS18} defined a bifunctor $(-)*(-):D^b(\Vbb)\times D^b(\Vbb)\to D^b(\Vbb)$ which is called the convolution bifunctor. They constructed a sheaf $K_a$ in $D^b(\Vbb)$ for $a\in\Rbb$ and defined the convolution distance $d_C$(see Definition \ref{a-isomorphism}) on $D^b(\Vbb)$ by applying the functor $K_a*(-)$. Precisely, the convolution distance is an extended pseudo-distance. We need some conditions about sheaves for proving that $d_C(F, G)=0$ if and only if $F\simeq G$ in $D^b(\Vbb)$. 
		Since the constructibility condition for $D^b(\Rbb)$ gives the decomposability theorem, $d_C$ on the derived category of constructible sheaves on $\Rbb$ satisfies the metric axioms (see \cite[Theorem 6.3.]{BG22}). On the other hand, a decomposability theorem for the category of constructible sheaves on $\Rbb^n$ is not yet found. For example, we need a stronger condition to prove that $d_C$ on $D^b(\Rbb^n)$ satisfies the metric axioms (see \cite{PSW24}, \cite{GV24}). 
	 
		 %projected barcode
		 
		 There are some techniques for dealing with $D^b(\Rbb^n)$. The projected barcode \cite{BP22} is a functor which reduces a sheaf to a tractable object. The idea of the projected barcode is derived from the fibered barcode (see \cite{Lan18}, \cite{LW15}) for persistence modules. The both barcodes are dimension reduction techniques defined by functions between $\Rbb^n$ and $\Rbb$. However we do not know how the projected barcode keeps properties of sheaves. In fact, there is a question about the convolution distance and the liner integral sheaf metric which is an extended pseudo-distance defined by the projected barcode (see \cite[Question 6.18.]{BP22}). 

		 %Radon transform
		 
		 Another useful technique is the Radon transform (see \cite[Definition 3.1.]{Gao17}), which is similar to the projected barcode. It is a functor from the derived category of sheaves on $\Rbb^n$ to that on $S^{n-1}\times \Rbb$. The Radon transform induces an equivalence of categories given by quantized contact transforms between the localized derived categories. A notable feature of the Radon transform is that original sheaves on $\Rbb^n$ can be restored up to constant sheaves from sheaves on $S^{n-1}\times\Rbb$. 
		 
		In this paper, we prove an isometry theorem by the Radon transform. 
		We first construct localized extended pseudo-distance $d_{LC}$ (resp.\,$d_{LI}$) on the localized categories of $D^b(\Rbb^n)$ (resp.\,$D^b(S^{n-1}\times\Rbb)$) from the usual convolution distance $d_C$ (resp.\,interleaving distance $d_I$). Here $d_I$ is an extended pseudo-distance on $D^b(S^{n-1}\times\Rbb)$, which is similar to $d_C$(see Definition \ref{a-interleaved}). The main theorem of this paper is an isometry theorem between $d_{LC}$ and $d_{LI}$. 

		\begin{theorem}
		\label{main theorem}
			The Radon transform $\GF_{\Bk_A}$ induces an isometric functor between the localized convolution and interleaving distances:
			\begin{equation}
				\GF_{\Bk_A}:(D^b(\Rbb^n, \dot{T}^*\Rbb^n), d_{LC})\to (D^b(S^{n-1}\times\Rbb, T^{*, +}(S^{n-1}\times\Rbb)), d_{LI}). 
			\end{equation}
			Namely, for any $F, G\in D^b(\Rbb^n, \dot{T}^*\Rbb^n)$, $d_{LC}(F, G)=d_{LI}(\GF_{\Bk_A}(F), \GF_{\Bk_A}(G))$. 
		\end{theorem}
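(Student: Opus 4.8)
The strategy is to reduce the theorem to a single \emph{intertwining} isomorphism relating the Radon transform $\GF_{\Bk_A}$ to the two convolution bifunctors that define $d_{LC}$ and $d_{LI}$, and then to exploit the fact, recalled above from \cite{Gao17}, that $\GF_{\Bk_A}$ is an equivalence between the localized categories in the statement, in particular fully faithful and essentially surjective. Write $\ast$ for the convolution on $\Rbb^n$ used in Definition~\ref{a-isomorphism}, realized as the kernel functor with kernel $\tilde K_a$ on $\Rbb^n\times\Rbb^n$ supported on $\{(x,y):\|x-y\|\le a\}$, and write $\star$ for the one-directional convolution on $S^{n-1}\times\Rbb$ used in Definition~\ref{a-interleaved}, realized as the kernel functor with kernel $\tilde M_a$ on $(S^{n-1}\times\Rbb)\times(S^{n-1}\times\Rbb)$ supported on $\{((\theta,t),(\theta',t')):\theta=\theta',\ |t-t'|\le a\}$. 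A preliminary step is to check that $K_a\ast(-)$ sends locally constant sheaves to locally constant sheaves and that $M_a\star(-)$ preserves the null subcategory defining the target localization, so that these functors descend to $D^b(\Rbb^n,\dot{T}^*\Rbb^n)$ and to $D^b(S^{n-1}\times\Rbb,T^{\ast,+}(S^{n-1}\times\Rbb))$; this is exactly what makes $d_{LC}$ and $d_{LI}$ well defined on the localized categories, and should already be implicit in their construction.

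The core of the argument is the following key lemma: for every $a\ge 0$ there is an isomorphism of functors on the localized categories
\begin{equation}
	\GF_{\Bk_A}\bigl(K_a\ast(-)\bigr)\ \simeq\ M_a\star\GF_{\Bk_A}(-),
\end{equation}
compatible with the canonical ``thickening'' morphisms $(-)\to K_a\ast(-)$ and $(-)\to M_a\star(-)$. I would prove this by composition of kernels: both sides are kernel functors $D^b(\Rbb^n)\to D^b(S^{n-1}\times\Rbb)$, with kernels the composites $\tilde K_a\circ\Bk_A$ and $\Bk_A\circ\tilde M_a$, and by base change these are computed from the set-theoretic composites of the underlying correspondences. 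Here the geometry does the work: enlarging the incidence relation $A=\{(x,\theta,t):\langle x,\theta\rangle\le t\}$ by a ball of radius $a$ in the $\Rbb^n$-variable produces $\{(x,\theta,t):\langle x,\theta\rangle\le t+a\}$, i.e.\ the relation $A$ translated by $a$ in the $t$-coordinate, and the same set is obtained by enlarging $A$ by $[-a,a]$ in the $t$-coordinate on the target side. Hence both composite kernels are isomorphic to $\Bk_{A^{(a)}}$, where $A^{(a)}$ is the Radon incidence set translated by $a$ in $t$; this is precisely the statement that the multi-directional motion produced by $K_a\ast(-)$ becomes, after the Radon transform, the one-directional motion produced by $M_a\star(-)$. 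The identification at the sheaf level holds because the fibres of the relevant composition projections are intersections of a closed ball with a closed half-space on one side and closed intervals on the other, all compact and contractible, so proper base change contributes $\Bk[0]$ along the fibres; compatibility with the thickening morphisms follows by tracing the inclusions of balls, resp.\ of intervals, through the same computation.

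Granting the key lemma, the proof concludes formally. For $d_{LI}(\GF_{\Bk_A}F,\GF_{\Bk_A}G)\le d_{LC}(F,G)$: given an $\epsilon$-interleaving of $F$ and $G$ in $D^b(\Rbb^n,\dot{T}^*\Rbb^n)$, that is morphisms $F\to K_\epsilon\ast G$ and $G\to K_\epsilon\ast F$ whose composites are the canonical maps into $K_{2\epsilon}\ast(-)$, apply $\GF_{\Bk_A}$ and transport along the key-lemma isomorphism to get morphisms $\GF_{\Bk_A}F\to M_\epsilon\star\GF_{\Bk_A}G$ and $\GF_{\Bk_A}G\to M_\epsilon\star\GF_{\Bk_A}F$; functoriality of $\GF_{\Bk_A}$ and compatibility of the key-lemma isomorphism with the thickening morphisms show their composites are the canonical maps into $M_{2\epsilon}\star(-)$, so the images are $\epsilon$-interleaved, and the infimum over $\epsilon$ gives the inequality. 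For the reverse inequality, use full faithfulness: by the key lemma $M_\epsilon\star\GF_{\Bk_A}G\simeq\GF_{\Bk_A}(K_\epsilon\ast G)$, whence $\Hom(\GF_{\Bk_A}F,M_\epsilon\star\GF_{\Bk_A}G)\simeq\Hom(F,K_\epsilon\ast G)$, so an $\epsilon$-interleaving of $\GF_{\Bk_A}F$ and $\GF_{\Bk_A}G$ is the image under $\GF_{\Bk_A}$ of a unique pair of morphisms $F\to K_\epsilon\ast G$, $G\to K_\epsilon\ast F$, and faithfulness forces their composites to be the canonical maps, so $F$ and $G$ are $\epsilon$-interleaved. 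Combining the two inequalities gives $d_{LC}(F,G)=d_{LI}(\GF_{\Bk_A}F,\GF_{\Bk_A}G)$ for all $F,G\in D^b(\Rbb^n,\dot T^*\Rbb^n)$.

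The main obstacle is the key lemma, specifically the passage from the set-theoretic identity of composite correspondences to an honest isomorphism of sheaf kernels on the \emph{localized} categories. One must verify the microlocal composability (non-characteristic) conditions of \cite{KS18} so that the compositions of kernels are well behaved and the two relevant $Rq_{!}$'s actually compute the composite functors; one must control the fibrewise $R\Gamma_c$ along the composition projections, which is contractible in the interior of the incidence region but degenerates on its boundary, and argue that anything not detected by the set-theoretic composition, together with the choice of open-versus-closed incidence relations, changes the kernel only by objects of the null subcategories that were quotiented out; and one must carry out the routine but bookkeeping-heavy check that the intertwining isomorphism is compatible with the canonical thickening morphisms, since it is this compatibility, and not merely the correspondence of objects, that transports the interleaving \emph{relations}. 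The preliminary descent of $K_a\ast(-)$ and $M_a\star(-)$ to the localized categories should be the only other point needing a genuine, if short, microlocal argument.
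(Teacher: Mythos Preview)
Your approach is essentially the paper's: the key intertwining lemma is exactly the paper's Lemma~\ref{commutative between radon and cmp}, $\Bk_A\circ\Bk_{\GD_a}\simeq\Bk_{A_a}\simeq\Bk_{Z_a}\tcp\Bk_A$, its compatibility with the thickening maps is Lemma~\ref{commutative between radon and chi}, and the isometry then follows formally from $\GF_{\Bk_A}$ being an equivalence. Two small corrections to align with the paper: the interleaving morphisms here go $K_a\ast F\to G$ with canonical maps $K_{2a}\ast F\to F$ (not $F\to K_a\ast G$ as you wrote), and the kernel identity is established in $D^b(S^{n-1}\times\Rbb\times\Rbb^n)$ \emph{before} localization---properness of $p_{13}$ on the support plus contractibility of the fibers suffices---so your concerns about non-characteristic conditions and null-subcategory corrections at the boundary do not in fact arise.
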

		
		This theorem is expected to solve problems mentioned above about the convolution distance.
		The convolution bifunctor transforms complexes of sheaves on $\Vbb$. In particular, The functor $K_{a}*(-)$ which is necessary for the definition of $d_C$ gives multi-directional movements to sheaves on $\Rbb^n$. If the behavior of $K_{a}*(-)$ causes a difficulty associated with the convolution distance, the above theorem is a powerful tool to clarify such a problem, because the Radon transform changes multi-directional movements to one-directional movements. 
		
		In order to observe such behavior, let us consider an example given by the subsets
		\begin{align*}
			S&=\{(x_1, x_2)\in\Rbb^2; -1\leq x_1\leq 1, -1\leq x_2\leq 1\}, &\\
			T&=\{(x_1, 1)\in\Rbb^2; -1\leq x_1\leq 1\}\sqcup\{(x_1, -1)\in\Rbb^2; -1\leq x_1\leq 1\}&
		\end{align*}
		of $\Rbb^2$. Let $\Bk_Z$ denote a complex of sheaves  in $D^b(\Rbb^2)$ defined by the constant sheaf on $\Rbb^2$ and the subset $Z=S\bs T$. Here $\Bk_Z$ satisfies that the restricted sheaf $\Bk_Z|_Z$ is isomorphic to the constant sheaf on $Z$ and $\Bk_Z|_{\Rbb^2\bs Z}\simeq 0$. For each $a\geq0$, we have the complex of sheaf $K_a*\Bk_Z$. If $0\leq a\leq 1$, there is a subset $Z^\prime_a$ of $\Rbb^2$ such that $K_a*\Bk_Z\simeq \Bk_{Z^\prime_a}$ If $a>1$, there are two subsets $Z^\prime_a$ and $Z^{\prime\prime}_a$ of $\Rbb^2$ such that $H^0(K_a*\Bk_Z)\simeq \Bk_{Z^\prime_a}$ and $H^1(K_a*\BK_Z)\simeq\Bk_{Z^{\prime\prime}_a}$. 
		
		Applying the Radon transform, $\GF_{\Bk_A}(K_a*\Bk_Z)$ is simpler than $K_a*\Bk_Z$. For any $a\geq0$, there is a continuous function $\Gvf:S^1\to\Rbb$ such that $\GF_{\Bk_A}(K_a*\Bk_Z)\simeq\Bk_{\{(y, t)\in S^1\times\Rbb; \Gvf(y)\leq t+a\}}[-1]$. 
		
		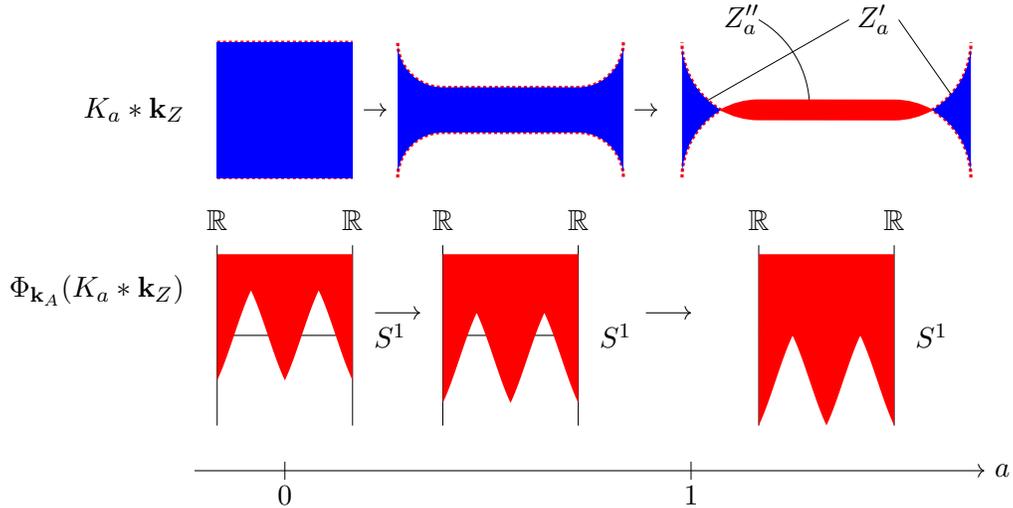
\begin{figure}[htbp]
		\label{movement_example}
			\centering
			\begin{tikzpicture}[scale=0.3]
				\coordinate[label=below:$0$](0)at(0, -16.2);
				\coordinate[label=below:$1$](1)at(18, -16.2);
				\coordinate[label=right:$a$](a)at(31, -16);
				\coordinate[label=left:$K_a*\Bk_Z$](KF)at(-4, 0);
				\coordinate[label=left:$\GF_{\Bk_A}(K_a*\Bk_Z)$](RKF)at(-4, -8);
				\coordinate[label=right:$Z^{\prime\prime}_a$](Z’a)at(19,4);
				\coordinate[label=right:$Z^\prime_a$](Z’’a)at(24.9,4);
				\draw[very thin] (0, -15.6) -- (0, -16.4);
				\draw[very thin] (18, -15.6) -- (18, -16.4);
				\draw[->] (3.5, 0) -- (4.5, 0);
				\draw[->] (15.5, 0) -- (16.5, 0);
				\draw[red, very thick, densely  dotted] (-3, 3) -- (3, 3);
				\draw[red, very thick, densely  dotted] (-3, -3) -- (3, -3);
				\draw[red, very thick, densely  dotted] (5, -3)arc(180:90:2)--(13, -1)arc(90:0:2);
				\draw[red, very thick, densely  dotted] (15, 3)arc(0:-90:2)--(7, 1)arc(270:180:2);
				\draw[red, very thick, densely  dotted] (17.6, -3)arc(180:120:3.46)arc(240:180:3.46);
				\draw[red, very thick, densely  dotted] (30.4, -3)arc(0:60:3.46)arc(-60:0:3.46);
				\draw[->] (4, -9) -- (6, -9);
				\draw[->] (16, -9) -- (18, -9);
				\draw[->] (-4, -16) -- (31, -16);
				\draw[very thin] (21, 4) arc (60:0:4.5); 
				\draw[very thin] (18, 0)--(25, 4);
				\draw[very thin] (30, 0)--(27.2, 4);
				\fill[blue, opacity=0.4] (-3, 3)--(-3, -3)--(3, -3)--(3, 3)--cycle;
				\fill[blue, opacity=0.4] (7, 1)arc(270:180:2)--(5, -3)arc(180:90:2)--(13, -1)arc(90:0:2)--(15, 3)arc(0:-90:2)--cycle;
				\fill[blue, opacity=0.4] (17.6, -3)arc(180:120:3.46)arc(240:180:3.46)--cycle;
				\fill[blue, opacity=0.4] (30.4, -3)arc(0:60:3.46)arc(-60:0:3.46)--cycle;
				\fill[red, opacity=0.4] (21, -0.46)arc(270:240:3.46)arc(120:90:3.46)--(27, 0.46)arc(90:60:3.46)arc(-60:-90:3.46)--cycle;
				%\fill[blue, opacity=0.4] (-3, 2)arc(135:225:2.8284)--(-2, -3)arc(225:315:2.8284)--(3, -2)arc(-45:45:2.8284)--(2, 3)arc(45:135:2.8284)--cycle;
				%\fill[gray, opacity=0.4] (14, -7) circle (3mm);
				%\fill[blue, opacity=0.4] (8, -7) circle (14.142mm);
				%\fill[blue, opacity=0.4] (0, -7) circle (28.284mm);
				\foreach\a in {-3, 7, 21}{
				\coordinate[label=below:$\Rbb$](Rbb1)at(\a, -4);
				\coordinate[label=below:$\Rbb$](Rbb1)at(\a+6, -4);
				\coordinate[label=right:$S^1$](S1)at(\a+6.5, -10);
				\draw[very thin] (\a, -14) -- (\a, -6);
				\draw[very thin] (\a +6, -14) -- (\a +6, -6);
				\draw[thin] (\a, -10) -- (\a+6, -10);
				\fill[red, opacity=0.4]
					plot[domain=0:90,smooth]({\a+(\x)/60},{2.82*sin((\x)-45)-10-(\a>6)-(\a>20)})
					--plot[domain=90:180,smooth]({\a+(\x)/60},{2.82*sin((\x)+45)-10-(\a>6)-(\a>20)}) 
					--plot[domain=180:270,smooth]({\a+(\x)/60},{2.82*sin((\x)+135)-10-(\a>6)-(\a>20)}) 
					--plot[domain=270:360,smooth]({\a+(\x)/60},{2.82*sin((\x)+225)-10-(\a>6)-(\a>20)})
					--(\a+6, -6.4)--(\a, -6.4)--cycle; 
				}
			\end{tikzpicture}
			
			\caption{For each $a\geq0$, the upper shape is $Z^\prime_a$ or $Z^\prime_a \cup Z^{\prime\prime}_a$ and the lower shape is $\{(y, t)\in S^1\times\Rbb; \Gvf(y)\leq t+a\}$. Since $S^1$ is a quotient space of the closed interval $[0, 1]$ of $\Rbb$ by identifying its endpoints $\{0, 1\}$, we show $\{(y, t)\in S^1\times\Rbb; \Gvf(y)\leq t+a\}$ as a subset of $[0, 1]\times\Rbb$ on this figure. }
		\end{figure}
		
		Increasing $a\geq0$, $Z^\prime_a$ is separated into two connected components and $H^1(K_a*\Bk_{Z})$ appears. On the other hand, increasing $a\geq0$, $\{(y, t)\in S^1\times\Rbb; \Gvf(y)\leq t+a\}$ just moves in the direction of $\Rbb$. Then by using the Radon transform, the movements given by the functor $K_a*(-)$ can be replaced with the tractable movements. Therefore, we expect that the property of the convolution distance $d_C$ on $D^b(\Rbb^n)$ is described by that on $D^b(\Rbb)$ or $d_I$ on $D^b(S^{n-1}\times\Rbb)$. 
		
		We will show some conjectures about the convolution and interleaving distances and remind this example in Section.\ref{conjecture} to explain a relation between these conjectures. 
		
	\subsection{Structure of this paper}
	\label{Str}\phantom{a} \vspace{-1em}%1/12ここまで
		
		This paper is organized as follows. 
		In Section.\ref{Pre}, we show some basics of sheaf theory. We define Grothendieck\!’\!\!s six operations: $\RHOM, \otimes, Rf_*, f^{-1}, Rf_!, f^!$ from continuous maps $f:X\to Y$ satisfying suitable conditions. The composition bifunctor which is necessary for the definition of the Radon transform is composed of these functors.
		In Section.\ref{QCT}, we show the microlocal theory for sheaves and the quantized contact transform. The microsupport of sheaf is necessary for localizations of categories. The Radon transform gives a special  equivalence of categories induced by the composition of the quantized contact transform. 
		In Section.\ref{CD}, we define the convolution functor and compare it with the composition functor. Applying the convolution bifunctor, we define the convolution distance and the interleaving distance. These distances are also defined in the setting of localized categories of sheaves by using an inequality of microsupports. 
		In Section.\ref{IT}, we prove the main theorem and show some conjectures related with the distances of sheaves. In addition, we give an example of the Radon transform changing multi directional movements to one directional movements. 

	{\bf Acknowledgments}
	
		I would like to express my sincere gratitude to my supervisor Yasuaki Hiraoka for his useful advices about approaches to research. I also thank Kohei Yahiro, who is a postdoctoral researcher of the same lab as me, for some comments about basics of sheaf theory from the preliminary stage of my study. 
		
		I am also grateful to Yuichi Ike for many discussions and sharing with me some topics relevant to my research. I would also like to thank Tomohiro Asano and Tatsuki Kuwagaki for helpful comments from points of view of symplectic geometry. This research is supported by JST, CREST Grant Number JPMJCR24Q1, Japan. 
		
\section{Preliminaries}
\label{Pre}\phantom{a}
	%sheaf theory
	Throughout this paper, $\Bk$ is a field and $X$ is a finite dimensional real smooth manifold. 
	
	\subsection{Presheaf and sheaf}
	\label{presheaf and sheaf}\phantom{a}
	%この論文ではk-ベクトル空間に値をとる層のみ考える
		A presheaf $F$ on $X$ is a contravariant functor from the category of open subsets of $X$ to the category $\Vect_\Bk$ of $\Bk$-vector space.  For an open subset $U$ of $X$, an element $s\in F(U)$ is called a {\bf section} of $F$. For an open inclusion $V\subset U$, a morphism $\Gr^F_{V, U}:F(U)\to F(V)$ defined by the inclusion is called the {\bf restriction morphism}. A morphism of presheaves $\Gvf: F\to G$ is a natural transformation from $F$ to $G$. Namely, it is a family of linear maps $\{\Gvf_U:F(U)\to G(U)\}$ and satisfies the following commutativety
		\[
		\xymatrix{
			F(U)\ar[r]^{\Gvf_{U}}\ar[d]_{\Gr^F_{V, U}}\ar@{}[dr]|{\circlearrowright}&G(U)\ar[d]^{\Gr^G_{V, U}}\\
			F(V)\ar[r]_{\Gvf_V}&G(V)
		}
		\]
		for all pair of open subsets $V\subset U$. For a section $s\in F(U)$, we often write $s|_V$ instead of $\Gr^F_{V, U}(s)$. Let $\mbox{PSh}(X)$ denote the category of presheaves of $\Bk$-vector space on $X$. 
	
		\begin{definition}
		\label{sheaf}
			A presheaf $F$ on $X$ is called a {\bf sheaf} if it satisfies the {\bf gluing condition} $(*)$ below. 
			
			$(*)$: For any open set $U\subset X$, any open covering $U=\bigcup_{i\in I}U_i$ and any family $s_i\in F(U_i)$ satisfying $s_i|_{U_i\cap U_j}=s_j|_{U_i\cap U_j}$ for all pairs $(i, j)$, there uniquely exists $s\in F(U)$ such that $s|_{U_i}=s_i$ for all $i$. 
		\end{definition}
		
		By the gluing condition, $F(\emptyset)=0$ for any sheaf $F$. 
		We define a morphism of sheaves as a morphism of the underlying presheaves. Let $\Sh(X)$ denote the category of sheaves of $\Bk$-vector space on $X$. %$\Sh(X)$ $D(X)$ (resp. $D^b(X)$) by the derived category (bounded derived category) of $\Sh(X)$. 
		
		\begin{definition}
		\label{stalk}
			Let $F$ be a sheaf on $X$. One defines the {\bf stalk} of $F$ at $x\in X$ as follow:
			\begin{equation}
				F_x:=\varinjlim_{U} F(U), 
			\end{equation}
			where $U$ runs through the family of open neighborhoods of $x$. 
		\end{definition}
		
		For a open subset $U\subset X$, we denote by $s_x$ the image of a section $s\in F(U)$ by the canonical morphism $F(U)\to F_x$. 
		The stalk of $F$ at $x\in X$ gives information of $F$ around an open neighborhood of $x$. It is useful to determine whether a morphism of sheaves is isomorphism or not.  
		
		\begin{proposition}\cite[Proposition 2.2.2.]{KS90}
		\label{stalk_iso}
			Let $\Gvf:F\to G$ be a morphism of sheaves on $X$. Then $\Gvf$ is an isomorphism if and if only for any $x\in X$, the induced morphism $\Gvf_x:F_x\to G_x$ is an isomorphism. 
		\end{proposition}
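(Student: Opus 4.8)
The plan is to prove the two implications separately. The forward implication is immediate from functoriality of the stalk; the reverse implication is where the gluing condition does the work, and it splits into an injectivity and a surjectivity argument for each $\Gvf_U$.

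First I would dispose of the easy direction. If $\Gvf$ is an isomorphism in $\Sh(X)$ with inverse $\Gy\colon G\to F$, then since $F\mapsto F_x$ is a functor (it sends a morphism to the induced map on the filtered colimit of section spaces), $\Gy_x$ is a two-sided inverse of $\Gvf_x$, so each $\Gvf_x$ is an isomorphism. For the converse, assume every $\Gvf_x$ is an isomorphism; by definition I must show that $\Gvf_U\colon F(U)\to G(U)$ is bijective for every open $U\subset X$.

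For injectivity, take $s\in F(U)$ with $\Gvf_U(s)=0$. For each $x\in U$ we have $\Gvf_x(s_x)=(\Gvf_U(s))_x=0$, hence $s_x=0$ since $\Gvf_x$ is injective. By the description of $F_x$ as the filtered colimit over open neighborhoods of $x$, there is an open neighborhood $U_x\subset U$ of $x$ with $s|_{U_x}=0$. These $U_x$ cover $U$, so the uniqueness part of the gluing condition $(*)$ forces $s=0$. For surjectivity, take $t\in G(U)$. For each $x\in U$, surjectivity of $\Gvf_x$ gives $\Gs_x\in F_x$ with $\Gvf_x(\Gs_x)=t_x$; choose an open neighborhood $V_x\subset U$ of $x$ and a section $s^x\in F(V_x)$ representing $\Gs_x$. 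Then $(\Gvf_{V_x}(s^x))_x=t_x$, so after shrinking $V_x$ we may assume $\Gvf_{V_x}(s^x)=t|_{V_x}$. On each overlap $V_x\cap V_y$ the sections $s^x|_{V_x\cap V_y}$ and $s^y|_{V_x\cap V_y}$ have the same image $t|_{V_x\cap V_y}$ under $\Gvf_{V_x\cap V_y}$, which is injective by the previous step, so they coincide. The gluing condition $(*)$ then yields $s\in F(U)$ with $s|_{V_x}=s^x$ for all $x$, and $\Gvf_U(s)|_{V_x}=\Gvf_{V_x}(s^x)=t|_{V_x}$, so $\Gvf_U(s)=t$ by separatedness.

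The main obstacle — really the only subtle point — is the passage from equality of stalks at $x$ to equality of sections on a sufficiently small neighborhood of $x$, which is invoked twice (to produce $U_x$ in the injectivity step and to normalize the local lifts $s^x$ in the surjectivity step). This is precisely where the filtered-colimit definition of the stalk is used, and it is also why one needs the uniqueness (separatedness) half of $(*)$ and not merely the existence of gluings.
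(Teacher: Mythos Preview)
Your proof is correct and follows essentially the same approach as the paper's: the forward direction by functoriality of stalks, and the converse by first establishing injectivity of each $\Gvf_U$ via the separatedness part of the gluing condition, then surjectivity by producing local lifts, using the already-proved injectivity on overlaps, and gluing. Your write-up is in fact slightly more explicit than the paper's about the passage from $s_x=0$ to $s|_{U_x}=0$ and about normalizing the local lifts before gluing, but the argument is the same.
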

		\begin{proof}
			Assume that $\Gvf$ is an isomorphism of sheaves. For any open subset $U\subset X$, $\Gvf_U:F(U)\to G(U)$ is an isomorphism of vector spaces. Therefore, for any $x\in X$, $\Gvf_x:F_x\to G_x$ is isomorphism. 
			
			Conversely assume $\Gvf_x$ is an isomorphism for all $x\in X$. Let $s$ be a section $F$ on an open subset $U$ of $X$ satisfying $\Gvf_U(s)=0$. Since $\Gvf_x(s_x)=(\Gvf_U(s))_x=0$, we have $s_x=0$ for all $x\in U$. By the gluing condition, we obtain $s=0$. Thus $\Gvf$ is an injective. Let $t$ be a section of $G$ on an open subset $U$ of $X$. By the assumption, there are an open covering $\{U_i\}$ of $U$ and $s_i\in F(U_i)$ such that $\Gvf_{U_i}(s_i)=t|_{U_i}$. Since 
			\[
			\Gvf_{U_i\cap U_j}(s_i|_{U_i\cap U_j})=\Gvf_{U_i}(s_i)|_{U_i\cap U_j}=t|_{U_i\cap U_j}=\Gvf_{U_j}(s_j)|_{U_i\cap U_j}=\Gvf_{U_i\cap U_j}(s_j|_{U_i\cap U_j}), 
			\]
			the injectivity of $\Gvf$ implies $s_i|_{U_i\cap U_j}=s_j|_{U_i\cap U_j}$ and hence there exists $s\in F(U)$ such that $s|_{U_i}=s_i$. We can check that $\Gvf_U(s)=t$. Thus $\Gvf$ is surjective.  
		\end{proof}
		
		\begin{proposition}\cite[Proposition 2.2.3.]{KS90}
		\label{sheafification}
			Given a presheaf $F$ on $X$, there exists a sheaf $F^+$ on $X$ and a morphism of presheaves $\Gt:F\to F^+$ such that for any sheaf $G$ and any morphism of presheaf $\Gvf:F\to G$, there uniquely exists a morphism $\widetilde\Gvf:F^+\to G$ which satisfies the following commutative diagram
			\[
			\xymatrix{
				F\ar[r]^{\Gvf}\ar[d]_{\Gt}&G\ar@{}@<-2ex>[dl]|{\circlearrowright}\\
				F^+. \ar@{.>}[ur]_{\widetilde\Gvf}&
			}
			\]
		\end{proposition}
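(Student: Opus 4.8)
The plan is to construct $F^+$ by hand as the presheaf of \emph{locally coherent families of germs}, and then verify the sheaf axiom and the universal property directly. For an open set $U\subseteq X$ I would define $F^+(U)$ to be the $\Bk$-vector subspace of $\prod_{x\in U}F_x$ consisting of the families $s=(s(x))_{x\in U}$ satisfying the local coherence condition: every $x\in U$ has an open neighborhood $V\subseteq U$ and a section $t\in F(V)$ with $s(y)=t_y$ for all $y\in V$. Since all operations are pointwise, $F^+(U)$ is closed under linear combinations (if $s,s'$ are coherent near $x$ via $t$ on $V$ and $t'$ on $V'$, then $\Gl s+\Gm s'$ is coherent near $x$ via $\Gl t+\Gm t'$ on $V\cap V'$), and the obvious restrictions of families make $F^+$ a presheaf of $\Bk$-vector spaces. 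The morphism $\Gt\colon F\to F^+$ is defined by $\Gt_U(t)=(t_x)_{x\in U}$, which indeed lies in $F^+(U)$ (take $V=U$ and the section $t$ itself), is $\Bk$-linear, and commutes with restrictions.

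Next I would check that $F^+$ is a sheaf. Given an open covering $U=\bigcup_i U_i$ and sections $s_i\in F^+(U_i)$ with $s_i|_{U_i\cap U_j}=s_j|_{U_i\cap U_j}$ for all $i,j$, define $s\in\prod_{x\in U}F_x$ by $s(x)=s_i(x)$ for any $i$ with $x\in U_i$; this is well defined by compatibility on overlaps, it satisfies local coherence at each $x$ because some $s_i$ does, and it is clearly the unique family restricting to every $s_i$. Hence the gluing condition $(*)$ holds. At this point it is convenient to record the elementary fact — a direct consequence of the uniqueness clause of $(*)$, exactly as in the proof of Proposition \ref{stalk_iso} — that two sections of a sheaf over an open set having the same germ at every point of that set must coincide; I will refer to this as \emph{separatedness} of a sheaf and use it below.

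Then comes the universal property, which is the heart of the argument. Given a sheaf $G$ and a morphism of presheaves $\Gvf\colon F\to G$, I define $\widetilde{\Gvf}_U(s)$ for $s\in F^+(U)$ as follows: using local coherence, choose an open covering $U=\bigcup_i U_i$ together with sections $t_i\in F(U_i)$ such that $s|_{U_i}=\Gt_{U_i}(t_i)$, and set $g_i=\Gvf_{U_i}(t_i)\in G(U_i)$. On $U_i\cap U_j$ the germs $(t_i)_x$ and $(t_j)_x$ agree for every $x$, hence so do $(g_i)_x=\Gvf_x((t_i)_x)$ and $(g_j)_x$, so by separatedness of $G$ the sections $g_i$ and $g_j$ agree on $U_i\cap U_j$; the gluing condition for $G$ then yields a unique $g\in G(U)$ with $g|_{U_i}=g_i$ for all $i$, and I put $\widetilde{\Gvf}_U(s)=g$. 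Two different choices of covering and lifts give sections of $G$ with identical germs everywhere, hence equal by separatedness, so $\widetilde{\Gvf}_U$ is well defined; it is $\Bk$-linear and commutes with restrictions by the same pointwise reasoning, so $\widetilde{\Gvf}\colon F^+\to G$ is a morphism of sheaves. Using the trivial covering $\{U\}$ with lift $t$ shows $\widetilde{\Gvf}_U(\Gt_U(t))=\Gvf_U(t)$, i.e. $\widetilde{\Gvf}\circ\Gt=\Gvf$. For uniqueness, if $\Gy\colon F^+\to G$ also satisfies $\Gy\circ\Gt=\Gvf$, then for $s$ with a covering/lift datum as above one has $\Gy_U(s)|_{U_i}=\Gy_{U_i}(\Gt_{U_i}(t_i))=\Gvf_{U_i}(t_i)=\widetilde{\Gvf}_U(s)|_{U_i}$ for every $i$, whence $\Gy_U(s)=\widetilde{\Gvf}_U(s)$ since $G$ is a sheaf.

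The main obstacle is the overlap-compatibility of the $g_i$ in the construction of $\widetilde{\Gvf}$, and it rests entirely on the separatedness lemma, namely that a section of a sheaf is pinned down by its germs. Everything else is bookkeeping: stability of local coherence under linear operations and restrictions, and independence of the chosen covering. Optionally, one may also observe that $\Gt$ induces isomorphisms $F_x\xrightarrow{\ \sim\ }(F^+)_x$ on stalks (surjectivity from local coherence, injectivity because $\Gt_U(t)$ vanishing near $x$ forces $t_x=0$), though this is not needed for the stated universal property.
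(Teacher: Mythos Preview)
Your construction of $F^+$ and $\Gt$ is exactly the one the paper gives, and your verification that $F^+$ is a sheaf is correct. The difference lies in how you establish the universal property. The paper does not build $\widetilde\Gvf$ by hand; instead it notes that $\Gt_x\colon F_x\to (F^+)_x$ is an isomorphism for every $x$, so by Proposition~\ref{stalk_iso} the map $\Gt\colon G\to G^+$ is an isomorphism whenever $G$ is already a sheaf, and then $\widetilde\Gvf$ is obtained from the composite
\[
\Hom_{\mathrm{PSh}(X)}(F,G)\longrightarrow \Hom_{\Sh(X)}(F^+,G^+)\xrightarrow{\ \sim\ }\Hom_{\Sh(X)}(F^+,G),
\]
the first arrow being functoriality of $(\,\cdot\,)^+$ on morphisms. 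This is shorter and immediately gives both existence and uniqueness as a bijection of Hom-sets, but it leans on Proposition~\ref{stalk_iso}. Your route constructs $\widetilde\Gvf$ explicitly via local lifts and gluing in $G$; it is more self-contained (only needing the separatedness consequence of the sheaf axiom) and makes the induced morphism concrete, at the cost of more bookkeeping. Amusingly, the stalk isomorphism you mention as ``optional'' is precisely the crux of the paper's argument.
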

		\begin{proof}
			For any open subset $U$ of $X$, we define the set of functions: 
			\begin{eqnarray*}
				F^+(U)&:=&\2s:U\to \prod_{x\in U}F_x; \text{ for any } x\in U, s(x)\in F_x \text{ and there is an open neighborhood } V \text{ of } x, \\
				&&V\subset U \text{ and } t\in F(V) \text{ such that } t_y=s(y) \text{ for any } y\in V\3. 
			\end{eqnarray*}
			Then $F^+$ satisfies the gluing condition. Therefore it is a sheaf on $X$. 
			
			We shall define the morphism $\Gt: F\to F^+$ as below: 
			\[
				\Gt_U:F(U)\to F^+(U),\quad s\mapsto\8s:U\to \prod_{x\in U}F_x, x\mapsto \{s_x\}\9, 
			\]
			where $U$ is an open subset of $X$. It satisfies that $\Gt_x:F_x\to (F^+)_x$ is an isomorphism for any $x\in X$. In particular if $G$ is a sheaf, $\Gt:G\to G^+$ is an isomorphism by Proposition \ref{stalk_iso}. Thus we can obtain the morphism $\widetilde\Gvf$ as the image of $\Gvf$ by the following homomorphism
			\[
				\Hom_{\mbox{PSh}(X)}(F, G)\to \Hom_{\Sh(X)}(F^+, G^+)\simeq\Hom_{\Sh(X)}(F^+, G). 
			\]
			It is easily checked that the above homomorphism has an inverse. Then it implies the uniqueness of $\widetilde\Gvf$. 
		\end{proof}
		We call $F^+$ the {\bf associated sheaf} to the presheaf $F$. 
		
		\begin{example}
		\label{constant sheaf}
			In the general setting, presheaves on $X$ are not sheaves. For example, if $X=\Rbb$ and $\Bk=\Zbb/2\Zbb$, we define a presheaf $F$ on $\Rbb$ which satisfies that for two open subsets $U$ and $V$ of $\Rbb$, 
			\begin{align*}
				U&\mapsto \Zbb/2\Zbb, \\
				V\subset U&\mapsto \id_{\Zbb/2\Zbb}:F(U)\to F(V). 
			\end{align*} 
			For $U_0=(-1, 0)$ and $U_1=(0, 1)$, we can not find $s\in F(U_0\cup U_1)$ which satisfies $s|_{U_0}=0\in F(U_0)=\Zbb/2\Zbb$ and $s|_{U_1}=1\in F(U_1)=\Zbb/2\Zbb$. 
		\end{example}
		\begin{example}
			\begin{enumerate}[(i)]
				\item We denote by $C^0_X$ a presheaf $U\mapsto \{f:U\to \Rbb; f {\rm \ is\ continuous\ on\ }U\}$ and by $\Gr^{C^0_X}_{V, U}$ a restriction morphism for the usual restriction of a function. Then $C^0_X$ is a sheaf on $X$. 
				\item We denote by $\Bk^\vee_X$ a presheaf $U\mapsto \{f:U\to \Bk; f {\rm \ is \ constant \ on\ }U\}$ and by $\Gr^{\Bk^\vee_X}_{V, U}$ a restriction morphism for the usual restriction of a function. Then $\Bk^\vee_X$ is not a sheaf on $X$. 
				\item We denote by $\Bk_X$ a presheaf $U\mapsto \{f:U\to \Bk; f {\rm \ is \ locally\ constant \ on\ }U\}$ and by $\Gr^{\Bk_X}_{V, U}$ a restriction morphism for the usual restriction of a function. Then $\Bk_X$ is a sheaf on $X$ since it is an assciated sheaf to $\Bk^\vee_X$. 
			\end{enumerate}
		\end{example}
		
		\begin{definition}
		\label{kernel and cokernel of sheaves}
			Let $\Gvf: F\to G$ be a morphism of sheaves on $X$. 
			\begin{enumerate}[(i)]
				\item One defines $\Ker\Gvf$ as a presheaf $U\mapsto \Ker\Gvf_U$. 
				\item One defines $\Coker\Gvf$ as an associated sheaf to the presheaf $U\mapsto\Coker\Gvf_U$. 
			\end{enumerate}
		\end{definition}
		
		Since the presheaf $U\mapsto \Ker\Gvf_U$ satisfies the gluing condition, it is actually a sheaf on $X$. Moreover $\Sh(X)$ is an abelian category. 
		
		\begin{proposition}\cite[Remark 2.2.5.]{KS90}
		\label{stalk_exact}
			A complex of sheaves $F\to G\to H$ in $\Sh(X)$ is exact if and only if for each $x\in X$ the sequence of vector spaces $F_x\to G_x\to H_x$ in $\Vect_\Bk$ is exact. In particular, the functor $(-)_x:\Sh(X)\to\Vect_\Bk$ is exact. 
		\end{proposition}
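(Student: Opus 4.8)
The plan is to reduce the statement to the principle that stalks detect isomorphisms (Proposition \ref{stalk_iso}), together with the fact that the stalk functor $(-)_x$ commutes with forming kernels and cokernels. So the first step I would carry out is to establish these commutation properties. For a morphism $\Gvf:F\to G$ in $\Sh(X)$, Definition \ref{kernel and cokernel of sheaves} presents $\Ker\Gvf$ as the (pre)sheaf $U\mapsto\Ker\Gvf_U$, hence $(\Ker\Gvf)_x=\varinjlim_{U\ni x}\Ker\Gvf_U$; since filtered colimits are exact in $\Vect_\Bk$, this equals $\Ker(\varinjlim_{U\ni x}\Gvf_U)=\Ker(\Gvf_x)$. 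Likewise $\Coker\Gvf$ is the sheaf associated to the presheaf $U\mapsto\Coker\Gvf_U$, and by the proof of Proposition \ref{sheafification} the canonical morphism to the associated sheaf is an isomorphism on every stalk, so $(\Coker\Gvf)_x=\varinjlim_{U\ni x}\Coker\Gvf_U=\Coker(\Gvf_x)$ (colimits commute with colimits). Combining these with the identity $\Im\Gvf=\Ker(G\to\Coker\Gvf)$ valid in any abelian category, I get $(\Im\Gvf)_x=\Ker\bigl(G_x\to\Coker(\Gvf_x)\bigr)=\Im(\Gvf_x)$. I would also note that a morphism of sheaves vanishes iff all of its stalk maps vanish (a special case of Proposition \ref{stalk_iso}, or directly from the gluing condition).

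Next I would treat the main equivalence. Let $F\xrightarrow{\Gvf}G\xrightarrow{\Gy}H$ be the complex. By the last remark, $\Gy\circ\Gvf=0$ if and only if $\Gy_x\circ\Gvf_x=0$ for every $x$; so I may assume the composite vanishes on both sides and work with honest subobjects of $G$. Exactness at $G$ means that the canonical monomorphism $\Im\Gvf\hookrightarrow\Ker\Gy$ (coming from the factorization of $\Gvf$ through $\Ker\Gy$) is an isomorphism. By Proposition \ref{stalk_iso}, this happens if and only if its stalk at each $x\in X$ is an isomorphism of vector spaces; and by the commutation properties from the first step, that stalk map is exactly the canonical map $\Im(\Gvf_x)\hookrightarrow\Ker(\Gy_x)$, which is an isomorphism precisely when $F_x\to G_x\to H_x$ is exact at $G_x$. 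This establishes the stated equivalence.

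For the "in particular" assertion, I would apply the equivalence just proved: $(-)_x$ sends a three-term exact sequence to a three-term exact sequence of $\Bk$-vector spaces, and it is clearly additive, so it is an exact functor (equivalently, it commutes with $\Ker$ and $\Coker$ by the first step). The main obstacle is not deep: the only point needing care is the commutation of $(-)_x$ with $\Ker$ and $\Coker$, which rests on exactness of filtered colimits in $\Vect_\Bk$ and on the fact—recorded in the proof of Proposition \ref{sheafification}—that sheafification does not alter stalks; everything else is a routine application of Proposition \ref{stalk_iso}.
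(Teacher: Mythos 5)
Your proof is correct and follows essentially the same route as the paper: both reduce to the observation that $(-)_x$ commutes with $\Ker$ and $\Coker$ (hence with $\Im$) and then invoke Proposition \ref{stalk_iso} to test whether the canonical map $\Im(F\to G)\to\Ker(G\to H)$ is an isomorphism. You simply spell out the details (exactness of filtered colimits, sheafification preserving stalks) that the paper compresses into ``we notice \ldots by Definition \ref{kernel and cokernel of sheaves}.''
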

		\begin{proof}
			We first remark that for a morphism $\Gvf$ of sheaves, we notice $(\Ker\Gvf)_x\simeq\Ker(\Gvf_x)$ and $(\Coker\Gvf)_x\simeq\Coker(\Gvf_x)$ by Definition \ref{kernel and cokernel of sheaves}. Let $F\to G\to H$ be a complex of sheaves in $\Sh(X)$. There is a canonical morphism $\Im(F\to G)\to \Ker(G\to H)$. By Proposition \ref{stalk_iso}, it is an isomorphism if and only if $F_x\to G_x\to H_x$ is the exact sequence of $\Vect_\Bk$ for any $x\in X$. 
		\end{proof}
	
	\subsection{Fundamental operations}
	\label{fundamental operations}
		%Grothendieck six operations 
		%$(-)_Z, \GG_Z$
		Let $f:X\to Y$ be a continuous map. 
		
		\begin{definition}
		\label{pushforward and pullback}
			\begin{enumerate}[(i)]
				\item Let $F$ be a sheaf on $X$. The {\bf direct image} $f_*F$ of $F$ by $f$ is the sheaf on $Y$ defined by
				\begin{equation*}
					V\mapsto f_*F(V)=F(f^{-1}(V)). 
				\end{equation*}
				\item Let $G$ be a sheaf on $Y$. The {\bf inverse image} $f^{-1}G$ of $G$ by $f$ is the sheaf on $X$ associated to the presheaf
				\begin{equation*}
					U\mapsto \varinjlim_{V} G(V), 
				\end{equation*}
				where $V$ ranges through the family of open neighborhoods of $f(U)$ in $Y$. 
			\end{enumerate}
		\end{definition}
		
		\begin{remark}
			\begin{enumerate}[(i)]
				\item There are canonical morphism of functors
				\begin{equation}
				\label{canonical morphism for pushforward and pullback}
					f^{-1}\circ f_*\to \id_{\Sh(X)},\quad \id_{\Sh(Y)}\to f_*\circ f^{-1} . 
				\end{equation}
				Then $f^{-1}$ is a left adjoint to $f_*$ and $f_*$ is a right adjoint to $f^{-1}$. Namely, we have the following isomorphism
				\begin{equation}
				\label{adjoint_pushforward and pullback}
					\Hom_{\Sh(X)}(f^{-1}G, F)\simeq \Hom_{\Sh(Y)}(G, f_*F), 
				\end{equation}
				where $F\in\Sh(X)$ and $G\in\Sh(Y)$. 
				\item Let $G\in\Sh(Y)$ and let $x\in X$. Then $(f^{-1}G)_x\simeq G_{f(x)}$. 
				\item In addition, $f^{-1}:\Sh(Y)\to\Sh(X)$ is an exact functor but $f_*:\Sh(X)\to\Sh(Y)$ is only left exact. 
			\end{enumerate}
		\end{remark}
		
		\begin{definition}
		\label{G-section}
			For an open subset $U$ of $X$, let $\GG(U; -)$ denote the functor $\Sh(X)\to \Vect_\Bk, F\mapsto F(U)$. 
		\end{definition}
		For any open subset $U$ of $X$, the functor $\GG(U; -)$ is left exact. 
		
		\begin{example}\cite[Example 2.3.2.]{KS90}
			Let $F$ be a sheaf on $X$. For $a_X:X\to \{\pt\}$, where $\{\pt\}$ is the set with one element, one has
			\[
				\Bk_X\simeq a_X^{-1}\Bk_{\{\pt\}},\quad \GG(X; F)=\GG(\{\pt\};a_{X*}F)\simeq a_{X*}F. 
			\]
		\end{example}
		
		\begin{notation}
		Let $F$ be a sheaf on $X$ and $Z$ be a subset of $X$. For an inclusion map $j:Z\to X$, one sets $F|_Z:=j^{-1}F$ and $\GG(Z; F):=\GG(Z; j^{-1}F)$. 
		\end{notation}
		
		\begin{proposition}\cite[Proposition 2.5.1.]{KS90}
		\label{section_representation}
			Let $F\in\Sh(X)$. For a closed subset $Z\subset X$, there is a canonical isomorphism
			\[
				\Gy:\varinjlim_{U}\GG(U; F)\simeq \GG(Z; F), 
			\]
			where $U$ ranges through the family of open neighborhoods of $Z$ in $X$. 
		\end{proposition}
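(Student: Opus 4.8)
The plan is to exhibit $\Gy$ as the evident ``restriction of sections'' map and then to prove it injective and surjective separately, these two verifications being of rather different natures. For an open neighbourhood $U$ of $Z$, with inclusion $j_U\colon Z\hookrightarrow U$, one has $F|_Z\simeq j_U^{-1}(F|_U)$, and applying $\GG(U;-)$ to the canonical morphism $F|_U\to j_{U*}j_U^{-1}(F|_U)$ of \eqref{canonical morphism for pushforward and pullback} produces a map $F(U)\to\GG(Z;F)$, which sends a section to its family of stalks along $Z$; these maps are compatible with the restriction maps $F(U)\to F(U')$ for $Z\subset U'\subset U$, hence factor through a morphism $\Gy\colon\varinjlim_U\GG(U;F)\to\GG(Z;F)$, and it remains to see that $\Gy$ is bijective. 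Throughout I will use the stalk identification $(F|_Z)_x=(j^{-1}F)_x\simeq F_x$ for $x\in Z$ (the Remark after Definition~\ref{pushforward and pullback}); the explicit model of the associated sheaf from Proposition~\ref{sheafification}, applied to the presheaf $P\colon W\mapsto\varinjlim_{V}F(V)$ on $Z$ (with $V$ running over open neighbourhoods of $W$ in $X$), whose associated sheaf is $F|_Z$ and whose stalks are $P_x\simeq F_x$; and the paracompactness of the manifold $X$.

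\emph{Injectivity.} Suppose $s\in F(U)$, $U\supset Z$ open, represents an element of $\Ker\Gy$, i.e.\ $s|_Z=0$ in $\GG(Z;F)$. Under $(F|_Z)_x\simeq F_x$ this says $s_x=0$ in $F_x$ for every $x\in Z$, so each $x\in Z$ has an open neighbourhood $V_x\subset U$ with $s|_{V_x}=0$; by the uniqueness clause of the gluing axiom (Definition~\ref{sheaf}), $s$ then vanishes on the open set $U':=\bigcup_{x\in Z}V_x$, and since $Z\subset U'\subset U$ the class of $s$ in $\varinjlim_U\GG(U;F)$ is $0$. (This step uses only that $F$ is a sheaf, not that $Z$ is closed.)

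\emph{Surjectivity, and the main obstacle.} Given $t\in\GG(Z;F)=(j^{-1}F)(Z)$, the description of $j^{-1}F$ as the sheaf associated to $P$ (Proposition~\ref{sheafification}) provides, for each $x\in Z$, an open neighbourhood $V_x\ni x$ in $X$ and a section $s_x\in F(V_x)$ whose stalks along $V_x\cap Z$ coincide with $t$: $(s_x)_y=t(y)$ in $F_y$ for all $y\in V_x\cap Z$. For $x,x'\in Z$ the restrictions of $s_x$ and $s_{x'}$ to $V_x\cap V_{x'}$ then have equal stalks at every point of the set $V_x\cap V_{x'}\cap Z$, which is closed in $V_x\cap V_{x'}$ because $Z$ is closed in $X$; the injectivity step applied over $V_x\cap V_{x'}$ thus gives an open $O_{xx'}$ with $V_x\cap V_{x'}\cap Z\subset O_{xx'}\subset V_x\cap V_{x'}$ on which $s_x=s_{x'}$. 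It remains to glue: using paracompactness of $X$, one refines $\{V_x\}_{x\in Z}$ to a locally finite open cover of some open $U\supset Z$ and shrinks it to $\{A_i\}_i$ (with a choice $A_i\subset V_{x(i)}$) so that $A_i\cap A_j\subset O_{x(i)x(j)}$ for all $i,j$; then the sections $s_{x(i)}|_{A_i}$ agree on overlaps and glue to $s\in F(U)$, which satisfies $s_y=t(y)$ for every $y\in Z$, i.e.\ $s|_Z=t$ and $\Gy([s])=t$. I expect this shrinking-and-patching to be the only genuine difficulty: out of the a priori uncontrolled family $\{V_x\}_{x\in Z}$ and the pairwise agreement sets $O_{xx'}$, one must fabricate a single honest open cover of a neighbourhood of $Z$ on which all chosen local sections simultaneously match on overlaps---which is precisely where the hypothesis that $X$ is a (hence paracompact) manifold enters. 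The construction of $\Gy$ and the injectivity are routine.
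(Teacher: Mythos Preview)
Your overall strategy matches the paper's: construct $\Gy$ as the evident restriction map, prove injectivity by the sheaf axiom (identical to the paper), and for surjectivity pass from local lifts $s_x\in F(V_x)$ to a single section on a neighbourhood of $Z$ via paracompactness.

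The one place where your proposal is thinner than the paper is precisely the step you flag as the ``only genuine difficulty'': you assert that after passing to a locally finite refinement one can \emph{shrink} to $\{A_i\}$ with $A_i\cap A_j\subset O_{x(i)x(j)}$, but this is not a standard shrinking lemma and you do not say how to achieve it. The paper resolves this directly rather than by a two-step refine-then-shrink: after choosing a locally finite refinement $\{U_i\}$ and a standard shrinking $\{V_i\}$ with $\ol V_i\subset U_i$, it defines
\[
W=\Bigl\{x\in\textstyle\bigcup_i V_i\;;\; s_{i,x}=s_{j,x}\ \text{for all }i,j\text{ with }x\in\ol V_j\Bigr\},
\]
and uses local finiteness of $\{\ol V_i\}$ to check that $W$ is open (each $x$ has a neighbourhood meeting only those $\ol V_j$ with $j\in I(x)$, so the stalk conditions persist). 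Then $Z\subset W$ by construction, and on $W\cap V_i\cap V_j$ the sections $s_i,s_j$ agree at every stalk, hence glue. In effect the paper's $W\cap V_i$ are your $A_i$, and the definition of $W$ is the concrete mechanism that produces the simultaneous overlap control you were reaching for. Your plan is correct, but the missing ingredient is exactly this construction of $W$.
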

		\begin{proof}
			We shall prove that $\Gy$ is injective. If $s\in\GG(U; F)$ is zero in $\GG(Z; F)$, $s_x=0$ for any $x\in Z$. Then $s$ is zero on an open neighborhood of $Z$. Therefore $\Gy$ is an injective morphism. 
			
			We shall prove that $\Gy$ is surjective. Let $s\in\GG(Z; F)$. Then there exists an open covering $\{U_i\}_{i\in I}$ of $Z$ and $s_i\in\GG(U_i; F)$ such that $s|_{U_i\cap Z}=s_i|_{U_i\cap Z}$. Since $Z$ is a closed subset of the manifold $X$, $Z$ is paracompact. Then we may assume that $\{U_i\}$ is a locally finite covering of $Z$. We can find a family of open subsets $\{V_i\}_{i\in I}$ such that $\ol V_i\subset U_i$, $\{\ol V_i\}$ is locally finite and $\bigcup V_i\supset Z$. For $x\in X$, we set $I(x)=\{i\in I; x\in \ol V_i\}$ and define
			\[
				W=\2x\in\bigcup V_i; s_{i, x}=s_{j, x} \text{ for any } i, j\in I(x)\3. 
			\]
			Then $I(x)$ is a finite set and each $x$ has a neighborhood $W_x$ such that $I(y)\subset I(x)$ for any $y\in W_x$. Therefore $W$ is open and $W$ contains $Z$ by its construction. Since $s_i|_{W\cap V_i\cap V_j}=s_j|_{W\cap V_i\cap V_j}$, there exists $\widetilde s\in\GG(W; F)$ such that $\widetilde s|_{W\cap V_i}=s_i|_{W\cap V_i}$. Then $\widetilde s$ satisfies $\Gy(\widetilde s)=s$. 
		\end{proof}
		
		\begin{definition}
		\label{support}
			Let $F$ be a sheaf on $X$. 
			\begin{enumerate}[(i)]
				\item One defines the {\bf support of a sheaf} $F$ denoted by $\supp(F)$ as the complement of the union of open subsets $U\subset X$ such that $F|_U=0$. 	
				\item For an open subset $U$ of $X$ and a section $s\in\GG(U;F)$, one defines the {\bf support of a section} $s$ as the complement in $U$ of the union of the open sets $V\subset U$ such that $\Gr^F_{V, U}(s)=0$. 
			\end{enumerate}
		\end{definition}
		
		%Hom, テンソル
		\begin{definition}
		\label{internal hom and tensor}
			Let $F, G$ be sheaves on $X$. 
			\begin{enumerate}[(i)]
				\item One defines $\HOM(F, G)$ as the presheaf $U\mapsto \Hom_{\Sh(U)}(F|_U, G|_U)$. 
				\item One defines $F\otimes G$ as an associated sheaf to the presheaf $U\mapsto F(U)\otimes_\Bk G(U)$. 
			\end{enumerate}
		\end{definition}
		Since the presheaf $U\mapsto \Hom_{\Sh(U)}(F|_U, G|_U)$ satisfies the gluing condition, it is actually a sheaf on $X$. 
		\begin{remark}
			\begin{enumerate}[(i)]
				\item The bifunctor $(-)\otimes(-)$ is a left adjoint to $\HOM(-, -)$ and $\HOM(-, -)$ is a right adjoint to $(-)\otimes(-)$. Namely, we have the following isomorphism
				\[
					\Hom_{\Sh(X)}(F\otimes G, H)\simeq \Hom_{\Sh(X)}(F, \HOM(G, H)), 
				\]
				where $F, G, H\in \Sh(X)$. In addition, $(-)\otimes(-):\Sh(X)\times\Sh(X)\to\Sh(X)$ is an exact functor because $\Bk$ is a field but $\HOM:\Sh(X)^\circ\times\Sh(X)\to\Sh(X)$ is only left exact. 
				\item Let $F, G\in\Sh(X)$ and let $x\in X$. Then $(F\otimes G)_x\simeq F_x\otimes G_x$. 
			\end{enumerate}
		\end{remark}
		
		\begin{proposition}\cite[Proposition 2.3.5.]{KS90}
		\label{tensor_pullback}
			Let $F, G\in \Sh(Y)$. Then there is a canonical isomorphism
			\[
				f^{-1}F\otimes f^{-1}G\simeq f^{-1}(F\otimes G). 
			\]
		\end{proposition}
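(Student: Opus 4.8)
The plan is to exhibit a canonical morphism $f^{-1}F\otimes f^{-1}G\to f^{-1}(F\otimes G)$ and then check that it is an isomorphism by reducing to stalks and invoking Proposition \ref{stalk_iso}.

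First I would construct the morphism at the presheaf level. For an open subset $U\subset X$, the presheaf underlying $f^{-1}F$ sends $U$ to $\varinjlim_{V}F(V)$, where $V$ ranges over the (filtered) family of open neighborhoods of $f(U)$ in $Y$; since $\otimes_\Bk$ commutes with filtered colimits in each variable, there is a natural isomorphism
\[
\Bigl(\varinjlim_{V}F(V)\Bigr)\otimes_\Bk\Bigl(\varinjlim_{V}G(V)\Bigr)\;\simeq\;\varinjlim_{V}\bigl(F(V)\otimes_\Bk G(V)\bigr).
\]
Composing with the sheafification maps $F(V)\otimes_\Bk G(V)\to(F\otimes G)(V)$ and the structural maps into the colimit gives a morphism of presheaves on $X$ from $U\mapsto(f^{-1}F)(U)\otimes_\Bk(f^{-1}G)(U)$ to $U\mapsto\varinjlim_{V}(F\otimes G)(V)$; applying the associated-sheaf functor and using its universal property (Proposition \ref{sheafification}) yields the sought morphism $\Gvf\colon f^{-1}F\otimes f^{-1}G\to f^{-1}(F\otimes G)$, and this construction is visibly natural in $F$ and $G$.

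Next, by Proposition \ref{stalk_iso} it suffices to prove that $\Gvf_x$ is an isomorphism for every $x\in X$. For this I would use the two stalk computations already recorded in the text: $(f^{-1}H)_x\simeq H_{f(x)}$ for $H\in\Sh(Y)$, and $(H_1\otimes H_2)_x\simeq(H_1)_x\otimes_\Bk(H_2)_x$. Combining them identifies the source of $\Gvf_x$ with $F_{f(x)}\otimes_\Bk G_{f(x)}$ and its target with $(F\otimes G)_{f(x)}\simeq F_{f(x)}\otimes_\Bk G_{f(x)}$.

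The only slightly delicate point — hence the step I expect to require the most care — is verifying that under these identifications $\Gvf_x$ becomes the identity map of $F_{f(x)}\otimes_\Bk G_{f(x)}$, rather than merely some morphism between abstractly isomorphic spaces. This is a diagram chase through the construction of $\Gvf$: the stalk at $x$ of $f^{-1}(-)$ is the colimit over open neighborhoods of $f(x)$, which is cofinal among the systems appearing above after passing to stalks, so the filtered-colimit isomorphism for $\otimes_\Bk$ is compatible with taking stalks and degenerates to the evident identification. Once this is checked, $\Gvf_x$ is an isomorphism for all $x$, and Proposition \ref{stalk_iso} upgrades $\Gvf$ to an isomorphism of sheaves, completing the proof. (One could also phrase the whole argument as an instance of the fact that $f^{-1}$, being a left adjoint, commutes with the colimits used to define $\otimes$, but the stalkwise verification is the most direct route given the tools developed so far.)
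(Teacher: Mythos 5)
Your proof takes the same route as the paper: construct the canonical morphism from the comparison maps $F(V)\otimes G(V)\to (F\otimes G)(V)$, then reduce to stalks via Proposition \ref{stalk_iso} using the two identities $(f^{-1}H)_x\simeq H_{f(x)}$ and $(H_1\otimes H_2)_x\simeq(H_1)_x\otimes(H_2)_x$. The additional care you take — spelling out the filtered-colimit compatibility and flagging that the stalk comparison must be the \emph{canonical} one rather than an abstract isomorphism — is a legitimate point that the paper's proof silently absorbs, but it is the same argument.
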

		\begin{proof}
			There is a canonical morphism $f^{-1}F\otimes f^{-1}G\to f^{-1}(F\otimes G)$ which is induced by $F(V)\otimes G(V)\to (F\otimes G)(V)$ for $V$ is an open subset of $Y$. For any $x\in X$, 
			\begin{align*}
				(f^{-1}F\otimes f^{-1}G)_x&\simeq (f^{-1}F)_x\otimes (f^{-1}G)_x\\
				&\simeq F_{f(x)}\otimes G_{f(x)}\\ 
				&\simeq (F\otimes G)_{f(x)}. 
			\end{align*}
			By Proposition \ref{stalk_iso}, we have the canonical isomorphism $f^{-1}F\otimes f^{-1}G\simeq f^{-1}(F\otimes G)$. 
		\end{proof}
		
		\begin{definition}
		\label{proper push}
			Let $F$ be a sheaf on $X$. 
			\begin{enumerate}[(i)]
				\item The {\bf proper direct image} $f_!F$ of $F$ by $f:X\to Y$ is the sheaf on $Y$ defined by
				\[
					V\mapsto \{s\in F(f^{-1}(V)); f:\supp(s)\to V {\rm\ is\ proper}\}. 
				\]
				\item One sets 
				\[
					\GG_c(X; F)=\{s\in \GG(X; F); \supp(s) {\rm\ is\ compact}\}. 
				\]
			\end{enumerate}
		\end{definition}
		By the above definition, one finds $\GG_c(X; F)\simeq a_{X!}F$ immediately.  
		
		\begin{proposition}\cite[Proposition 2.5.2.]{KS90}
		\label{proper stalk}
			Let $F$ be a sheaf on $X$ and $f:X\to Y$ be a continuous map. Then for $y\in Y$, the canonical morphism
			\[
				(f_! F)_y\to \GG_c(f^{-1}(y); F|_{f^{-1}(y)})
			\]
			is an isomorphism. 
		\end{proposition}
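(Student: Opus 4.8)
The plan is to unwind the colimit defining $(f_!F)_y$, make the canonical morphism explicit, and then check injectivity and surjectivity by hand, using only that the manifold $X$ is locally compact, Hausdorff and paracompact, that $Y$ is Hausdorff, together with Proposition~\ref{section_representation} and the gluing condition of Definition~\ref{sheaf}. Concretely, $(f_!F)_y=\varinjlim_{V\ni y}(f_!F)(V)$, where $(f_!F)(V)$ consists of those $s\in\GG(f^{-1}(V);F)$ for which $f|_{\supp(s)}\colon\supp(s)\to V$ is proper. Restricting such an $s$ along $j\colon f^{-1}(y)\hookrightarrow f^{-1}(V)$ produces a section of $F|_{f^{-1}(y)}$ whose support is $\supp(s)\cap f^{-1}(y)$ (recall that $(j^{-1}F)_x\simeq F_x$ for $x\in f^{-1}(y)$), and this set is the $f|_{\supp(s)}$-preimage of the point $y$, hence compact. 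Since these restrictions commute with the transition maps $(f_!F)(V)\to(f_!F)(V')$ for $V'\subset V$, they pass to the colimit and define the morphism $(f_!F)_y\to\GG_c(f^{-1}(y);F|_{f^{-1}(y)})$ in question.

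For injectivity I would take $s\in(f_!F)(V)$ whose restriction to $f^{-1}(y)$ vanishes. Then $s_x=0$ for every $x\in f^{-1}(y)$, i.e.\ $\supp(s)\cap f^{-1}(y)=\emptyset$. Since $f|_{\supp(s)}$ is proper and $V$ is locally compact Hausdorff, $f(\supp(s))$ is closed in $V$ and does not contain $y$, so $V':=V\bs f(\supp(s))$ is an open neighbourhood of $y$ with $f^{-1}(V')\cap\supp(s)=\emptyset$; hence $s|_{f^{-1}(V')}=0$ and $s$ is already $0$ in the colimit.

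For surjectivity, start from $t\in\GG_c(f^{-1}(y);F|_{f^{-1}(y)})$ with compact support $K$. Since $f^{-1}(y)$ is closed in $X$, Proposition~\ref{section_representation} gives an open $W\supset f^{-1}(y)$ and $\ol t\in\GG(W;F)$ with $\ol t|_{f^{-1}(y)}=t$, so that $\supp(\ol t)\cap f^{-1}(y)=K$. Using local compactness of $X$, choose an open $U$ with $K\subset U\subset\ol U\subset W$ and $\ol U$ compact; then $A:=\supp(\ol t)\cap\ol U$ is compact (hence closed in $X$) with $A\cap f^{-1}(y)=K\subset U$, so $A\bs U=\supp(\ol t)\cap(\ol U\bs U)$ is compact and disjoint from $f^{-1}(y)$, and there is an open $V\ni y$ with $f^{-1}(V)\cap(A\bs U)=\emptyset$. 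This is arranged precisely so that $\{f^{-1}(V)\cap U,\ f^{-1}(V)\bs A\}$ is an open cover of $f^{-1}(V)$ on which $\ol t$ and the zero section agree over the overlap; gluing produces $s\in\GG(f^{-1}(V);F)$ with $\supp(s)\subset A\cap f^{-1}(V)$, a compact set. Hence $f|_{\supp(s)}$ is automatically proper, so $s\in(f_!F)(V)$, and comparing $s$ and $t$ on the cover $\{f^{-1}(y)\cap U,\ f^{-1}(y)\bs A\}$ (noting $K\subset A$) gives $s|_{f^{-1}(y)}=t$.

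The one genuinely delicate point is this last construction: the neighbourhoods $W$, $U$, $V$ must be produced in this order so that extending $\ol t$ by zero is legitimate and yields a section with compact — equivalently $f$-proper — support. All the point-set inputs it relies on (a manifold is locally compact, paracompact and Hausdorff, a compact set maps to a compact set, and a proper map into a locally compact Hausdorff space is closed) are available under the standing hypotheses, so I expect no obstacle beyond this bookkeeping.
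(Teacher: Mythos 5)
Your proof takes essentially the same route as the paper's: construct the restriction morphism, then use Proposition \ref{section_representation} to extend a compactly supported section on the fibre and arrange neighbourhoods so that extending by zero produces a section of $f_!F$ near $y$. The only organisational difference is that you split the argument into injectivity and surjectivity, whereas the paper constructs a one-sided inverse map; your version is in fact a little more complete, since the paper does not explicitly check that its inverse is two-sided, while your injectivity argument (using that a proper map into the locally compact Hausdorff space $V$ is closed) covers that.

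There is one small misstatement in the surjectivity step. The set $A\cap f^{-1}(V)$ is \emph{not} compact in general: it is the intersection of the compact set $A$ with the open set $f^{-1}(V)$, hence only an open subset of $A$, and consequently $\supp(s)$ itself need not be compact. What your argument actually needs — and what does hold — is just that $f|_{\supp(s)}\colon\supp(s)\to V$ is proper, and this follows from the two facts you already have in hand: $\supp(s)\subset A$ and $\supp(s)$ is closed in $f^{-1}(V)$. Indeed, for $L\subset V$ compact, $f^{-1}(L)\cap\supp(s)$ is closed in $X$ (because $f^{-1}(L)$ is closed in $X$, is contained in $f^{-1}(V)$, and $\supp(s)$ is closed in $f^{-1}(V)$) and is contained in the compact set $A$, hence compact. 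This is precisely how the paper phrases its final step: $\supp(\widetilde s)$ lies inside a set on which $f$ is already proper. With that one correction your proof agrees with the paper's.
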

		\begin{proof}
			We first construct the morphism $\Ga:(f_! F)_y\to \GG_c(f^{-1}(y); F|_{f^{-1}(y)})$. 
			An element of $(f_! F)_y$ is represented by $t\in \GG(V; f_!F)$, where $V$ is an open neighborhood of $y\in Y$. Since $\GG(V; f_!F)=\{s\in F(f^{-1}(V)); f:\supp(s)\to V {\rm\ is\ proper}\}$ and $f^{-1}(V)$ is an open neighborhood of $f^{-1}(y)\subset X$, we can obtain an element of $\GG_c(f^{-1}(y); F|_{f^{-1}(y)})$ by restriction of $t$ on $f^{-1}(x)$. 
			
			We shall construct the inverse of $\Ga$. Let $s\in\GG_c(f^{-1}(y); F|_{f^{-1}(y)})$. We set $K=\supp(s)$. By Proposition \ref{section_representation}, there exists an open neighborhood $U$ of $K$ in $X$ and $t\in\GG(U; G)$ such that $t|_K=s|_K$. By shrinking $U$, we may assume $t|_{U\cap f^{-1}(y)}=s|_{U\cap f^{-1}(y)}$. Let $V$ be a relatively compact open neighborhood of $K$, with $\ol V\subset  U$. Since $y$ does not belong to $f(\ol V\cap \supp(t)\bs V)$, there exists an open neighborhood $W$ of $y$ such that $f^{-1}(W)\cap \ol V\cap \supp(t)\subset V$. We define $\widetilde s\in\GG(f^{-1}(W); F)$ by setting
			\[
				\widetilde s|_{f^{-1}(W)\bs(\supp(t)\cap \ol V)}=0,\quad 
				\widetilde s|_{f^{-1}(W)\cap V}=t|_{f^{-1}(W)\cap V}. 
			\]
			Since $\supp(\widetilde s)$ is contained in $f^{-1}(W)\cap \supp(t)\cap \ol V$, $f$ is proper on this set. Moreover we have $\widetilde s|_{f^{-1}(y)}=s$ and obtain the inverse of $\Ga$. 
		\end{proof}%あとでノート見て
		
		\begin{corollary}\cite[Remark 2.5.3.]{KS90}
		\label{push stalk}
			Let $F$ be a sheaf on $X$ and $f:X\to Y$ be a continuous map. Assume $f$ is proper on $\supp(F)$. Then for $y\in Y$, the canonical morphism
			\[
				(f_* F)_y\to \GG(f^{-1}(y); F|_{f^{-1}(y)})
			\]
			is an isomorphism. 
		\end{corollary}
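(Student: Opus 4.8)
The plan is to deduce this from Proposition \ref{proper stalk} by noting that the hypothesis forces $f_*F$ and $f_!F$ to coincide. Recall that $f_!F$ is always a subsheaf of $f_*F$: for open $V\subseteq Y$ one has $f_!F(V)\subseteq F(f^{-1}(V))=f_*F(V)$, and the restriction morphisms of $f_*F$ preserve this subset since properness is stable under restriction over open subsets of the base. So the first step is to check that when $f$ is proper on $\supp(F)$ this inclusion is an equality. Given $s\in F(f^{-1}(V))$, one argues that $\supp(s)$ is closed in $f^{-1}(V)$ and contained in $\supp(F)$, that the map $f:\supp(F)\cap f^{-1}(V)\to V$ is proper (restriction over the open $V$ of the proper map $f|_{\supp(F)}$), and that restricting a proper map to a closed subspace keeps it proper; hence $f|_{\supp(s)}:\supp(s)\to V$ is proper, i.e. $s\in f_!F(V)$. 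Thus $f_*F=f_!F$ and in particular $(f_*F)_y\simeq(f_!F)_y$.

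The second step is to invoke Proposition \ref{proper stalk}, giving the canonical isomorphism $(f_!F)_y\xrightarrow{\ \sim\ }\GG_c(f^{-1}(y);F|_{f^{-1}(y)})$, and then to observe that here the compact-support restriction is vacuous, i.e. $\GG_c(f^{-1}(y);F|_{f^{-1}(y)})=\GG(f^{-1}(y);F|_{f^{-1}(y)})$. For this I would show that $\supp(F|_{f^{-1}(y)})$ is compact: writing $j:f^{-1}(y)\hookrightarrow X$ for the inclusion, the isomorphism $(j^{-1}F)_x\simeq F_x$ for $x\in f^{-1}(y)$ gives $\supp(F|_{f^{-1}(y)})\subseteq\supp(F)\cap f^{-1}(y)$; the latter set is the fiber over $y$ of the proper map $f|_{\supp(F)}$, hence compact, and $\supp(F|_{f^{-1}(y)})$ is closed in $f^{-1}(y)$, hence a closed — so compact — subset of it. Therefore every global section of $F|_{f^{-1}(y)}$ has support in this compact set, and the two spaces of sections coincide.

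The last step is to check that the composite
\[
(f_*F)_y\simeq(f_!F)_y\xrightarrow{\ \sim\ }\GG_c(f^{-1}(y);F|_{f^{-1}(y)})=\GG(f^{-1}(y);F|_{f^{-1}(y)})
\]
is the canonical restriction morphism of the statement. Since every arrow above is built from restriction maps, a germ at $y$ represented by $t\in F(f^{-1}(V))$ is sent along the chain to $t|_{f^{-1}(y)}$, which is exactly the canonical morphism. The only genuine obstacle is the bookkeeping in the first step around stability of properness under restriction over open subsets of the base and under restriction to closed subspaces; granting that, the statement falls out of Proposition \ref{proper stalk}.
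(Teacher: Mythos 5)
Your proposal is correct and follows the intended route: the paper leaves this corollary unproved, and the implicit argument is exactly the one you spell out, namely that properness of $f$ on $\supp(F)$ forces the subsheaf inclusion $f_!F\hookrightarrow f_*F$ to be an equality, and forces $\supp(F|_{f^{-1}(y)})$ to be compact so that $\GG_c(f^{-1}(y);F|_{f^{-1}(y)})=\GG(f^{-1}(y);F|_{f^{-1}(y)})$, after which Proposition~\ref{proper stalk} gives the result. The topological bookkeeping you flag (properness is stable under base change to an open $V\subset Y$, and under restriction to a closed subspace of the source; and $\supp(s)\subset\supp(F)\cap f^{-1}(V)$ is closed in the latter) is all correct, and the compatibility of the composite with the canonical restriction map is immediate from naturality since every arrow in the chain is induced by restriction of sections.
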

		%\begin{proof}
			%%%%%%%%%%%%%
		%\end{proof}
		
		\begin{proposition}\cite[Proposition 2.5.11.]{KS90}
		\label{cartesian_commutative}
			For the following Cartesian square of finite dimensional manifolds
			\begin{equation}
			\label{cartesian_square}
				\xymatrix{
				X^\prime\ar[r]^{f^\prime}\ar[d]_{g^\prime}&Y^\prime\ar[d]^{g}\\
				X\ar[r]_{f}\ar@{}[ur]|{\square}&Y,
				}
			\end{equation}
			there is a canonical isomorphism of functors
			%\begin{enumerate}[(i)]
				%\item $
			\begin{equation}
			\label{cartesian_isomorphsim}
				g^{-1}\circ f_!\simeq f^{\prime}_!\circ g^{\prime-1}. 
			\end{equation}
				%$
				%\item $g^{-1}\circ Rf_!\simeq Rf^{\prime}_!\circ g^{\prime-1}$
				%\item $f^{!}\circ Rg_*\simeq Rg^{\prime}_*\circ f^{\prime!}$
			%\end{enumerate}
		\end{proposition}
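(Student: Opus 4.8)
The plan is to exhibit a canonical morphism $\theta_F\colon g^{-1}f_!F\to f'_!g'^{-1}F$, natural in $F\in\Sh(X)$, and then to verify that it is an isomorphism by a stalkwise computation, applying Proposition \ref{stalk_iso}.

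\emph{Construction of $\theta_F$.} By the adjunction \eqref{adjoint_pushforward and pullback} for $g$, giving $\theta_F$ amounts to giving a morphism $f_!F\to g_*\big(f'_!g'^{-1}F\big)$ in $\Sh(Y)$, that is, for each open $V\subset Y$ a map $(f_!F)(V)\to (f'_!g'^{-1}F)(g^{-1}(V))$ compatible with restrictions. Since \eqref{cartesian_square} commutes, $f'^{-1}(g^{-1}(V))=g'^{-1}(f^{-1}(V))$, so the canonical map $F(f^{-1}(V))\to (g'^{-1}F)\big(g'^{-1}(f^{-1}(V))\big)$ coming from the unit \eqref{canonical morphism for pushforward and pullback} for $g'$ sends a section $s$ to a section $s'$ of $g'^{-1}F$ on $f'^{-1}(g^{-1}(V))$ with $\supp(s')\subset g'^{-1}(\supp(s))$. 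If in addition $f|_{\supp(s)}\colon\supp(s)\to V$ is proper, then, because \eqref{cartesian_square} is Cartesian, $g'^{-1}(\supp(s))\to g^{-1}(V)$ is the base change of $\supp(s)\to V$ along the restriction $g^{-1}(V)\to V$ of $g$, hence is proper; consequently $f'|_{\supp(s')}$ is proper over $g^{-1}(V)$, so $s'\in (f'_!g'^{-1}F)(g^{-1}(V))$. The assignment $s\mapsto s'$ is the desired map, and compatibility with restrictions and naturality in $F$ are clear from the construction.

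\emph{Stalkwise isomorphism.} Fix $y'\in Y'$ and set $y=g(y')$. On the left, the stalk formula for $g^{-1}$ together with Proposition \ref{proper stalk} gives $(g^{-1}f_!F)_{y'}\simeq (f_!F)_{y}\simeq \GG_c(f^{-1}(y);F|_{f^{-1}(y)})$. On the right, Proposition \ref{proper stalk} applied to $f'$ gives $(f'_!g'^{-1}F)_{y'}\simeq \GG_c\big(f'^{-1}(y');(g'^{-1}F)|_{f'^{-1}(y')}\big)$. Since \eqref{cartesian_square} is Cartesian, the projection identifies $f'^{-1}(y')$ with $f^{-1}(y)$ as topological spaces, i.e.\ $g'$ restricts to a homeomorphism $\gamma\colon f'^{-1}(y')\xrightarrow{\sim}f^{-1}(y)$, and along the fibre inclusions $g'\circ j'=j\circ\gamma$, whence $(g'^{-1}F)|_{f'^{-1}(y')}\simeq \gamma^{-1}\big(F|_{f^{-1}(y)}\big)$; transporting $\GG_c$ along $\gamma$ identifies the right-hand stalk with $\GG_c(f^{-1}(y);F|_{f^{-1}(y)})$ as well. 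Unwinding the definitions, $\theta_F$ induces precisely this identification at $y'$ (both maps are induced by restriction of sections to the fibre), and if $f^{-1}(y)=\emptyset$ both stalks vanish. By Proposition \ref{stalk_iso}, $\theta_F$ is an isomorphism, and naturality gives the isomorphism of functors \eqref{cartesian_isomorphsim}.

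The only genuine point, I expect, is the properness transfer used twice above: that $g'^{-1}(\supp(s))\to g^{-1}(V)$ is a base change of the proper map $\supp(s)\to V$, and that the fibres $f'^{-1}(y')$ and $f^{-1}(y)$ are homeomorphic via $g'$. Both hold because the underlying topological space of the Cartesian square \eqref{cartesian_square} is the topological fibre product $X\times_Y Y'$ — this is exactly where the Cartesian hypothesis is used — after which everything is routine bookkeeping with supports and restriction morphisms.
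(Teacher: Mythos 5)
Your proof is correct and follows essentially the same route as the paper: construct the base-change morphism via the adjunction for $g$, using properness under base change (the Cartesian hypothesis) to land in $f'_!g'^{-1}F$, and then verify the isomorphism stalkwise via Proposition \ref{proper stalk} and the fibre homeomorphism $g'\colon f'^{-1}(y')\xrightarrow{\sim}f^{-1}(g(y'))$. The only cosmetic difference is that the paper first builds an auxiliary morphism $f_!\circ g'_*\to g_*\circ f'_!$ (for sheaves on $X'$) and then precomposes with the unit $\id\to g'_*g'^{-1}$, whereas you define the adjoint map $f_!F\to g_*f'_!g'^{-1}F$ directly; unwinding the paper's composite yields exactly your map.
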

		
		\begin{proof}
			First we shall construct a canonical morphism
			\begin{equation}
			\label{cartesian_morphism}
				f_!\circ g^\prime_*\to g_*\circ f^\prime_!. 
			\end{equation}
			Let $F\in\Sh(X^\prime)$ and let $V$ be an open subset of $Y$. Consider the following morphisms
			\begin{equation}
				\xymatrix{
				\GG(V; f_!\circ g^\prime_* F)\ar[r]\ar[d]&\GG(f^{-1}(V); g^\prime_* F)\ar[r]^-{\simeq}&\GG(g^{\prime-1}(f^{-1}(V)); F)\ar[d]^{\simeq}\\
				\GG(V; g_*\circ f^\prime_! F)\ar[r]_-{\simeq}&\GG(g^{-1}(V); f^\prime_!F)\ar[r]&\GG(f^{\prime-1}(g^{-1}(V)); F). 
				}
			\end{equation}
			A section $s\in\GG(V; f_!\circ g^\prime_*F)$ is given by a section $t\in\GG(f^{-1}(V); g^\prime_* F)$ such that $f:Z\to V$ is proper with $Z:=\supp(t)$. For the section $t$, we have a section $u\in\GG(g^{\prime-1}(f^{-1}(V)); F)\simeq\GG(f^{\prime-1}(g^{-1}(V)); F)$ such that $Z=g^\prime(\supp(u))$. 
			By the following Cartesian square
			\[
				\xymatrix{
				g^{\prime-1}(Z)\ar[r]^{f^\prime}\ar[d]_{g^\prime}&g^{-1}(V)\ar[d]^{g}\\
				Z\ar[r]_{f}\ar@{}[ur]|{\square}&V, 
				}
			\]
			$f^\prime:g^{\prime-1}(Z)\to g^{-1}(V)$ is proper. 
			Therefore $u$ defines a section of $g_*\circ f^\prime_! F$ on $V$ and this assignment gives the morphism (\ref{cartesian_morphism}). 
			
			We consider the composition of (\ref{canonical morphism for pushforward and pullback}) and (\ref{cartesian_morphism}):
			\begin{equation}
			\label{cartesian_morphism2}
				f_!\to f_!\circ g^{\prime}_*\circ g^{\prime-1}\to g_*\circ f^\prime_!\circ g^{\prime-1}. 
			\end{equation}
			By the isomorphism (\ref{adjoint_pushforward and pullback}), for $F\in\Sh(X)$, we have
			\[
				\Hom_{\Sh(Y^\prime)}(g^{-1}\circ f_!F, f^{\prime}_!\circ g^{\prime-1}F)\simeq \Hom_{\Sh(Y)}(f_!F, g_*\circ f^\prime_!\circ g^{\prime-1}F).  
			\]
			We get the morphism (\ref{cartesian_isomorphsim}) which is the image of the morphism (\ref{cartesian_morphism2}) by the above isomorphism. 
			
			To prove this is an isomorphism, we calculate the stalk at $y\in Y^\prime$: 
			\begin{align*}
				(g^{-1}\circ f_!F)_y&\simeq (f_!F)_{g(y)}\\
				&\simeq \GG_c(f^{-1}(g(y)); F|_{f^{-1}(g(y))})\\
				&\simeq \GG_c(f^{\prime-1}(y); g^{\prime-1}F|_{f^{\prime-1}(y)})\\
				&\simeq (f^{\prime}_!\circ g^{\prime-1}F)_y. 
			\end{align*}
			The third isomorphism is induced by the homeomorphism $g^{\prime}:f^{\prime-1}(y)\to f^{-1}(g(y))$. 
		\end{proof}
		
		\begin{proposition}\cite[Proposition 2.5.12.]{KS90}
		\label{tensor_vector space}
			Let $L$ be a vector space and let $F$ be a sheaf on $X$. Then there is a natural isomorphism
			\begin{equation}
				\GG_c(X; F)\otimes L\simeq \GG_c(X; F\otimes L_X). 
			\end{equation}
			Here $L_X$ is a constant sheaf on $X$ defined by $a_X^{-1}L$. 
		\end{proposition}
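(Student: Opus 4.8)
The plan is to reduce everything to stalks and sections, exactly as in the proofs of Propositions \ref{tensor_pullback} and \ref{proper stalk} above. First I would construct the natural morphism
\[
	\alpha_L:\GG_c(X; F)\otimes L\to \GG_c(X; F\otimes L_X).
\]
Since $L_X=a_X^{-1}L$ and a section of the presheaf $U\mapsto F(U)\otimes_\Bk L$ maps canonically into $(F\otimes L_X)(U)$ after sheafification, a pair $(s,\ell)$ with $s\in\GG_c(X;F)$ and $\ell\in L$ determines a global section $s\otimes\ell$ of $F\otimes L_X$; its support is contained in $\supp(s)$, hence compact, so $s\otimes\ell\in\GG_c(X;F\otimes L_X)$. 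Extending $\Bk$-bilinearly over $L$ gives $\alpha_L$, and naturality in both $F$ and $L$ is immediate from the constructions. Equivalently, one may phrase this via $a_{X!}$: using $\GG_c(X;-)\simeq a_{X!}$ from the remark after Definition \ref{proper push} together with Proposition \ref{tensor_vector space} being exactly the statement that $a_{X!}$ commutes with $-\otimes L$.

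To show $\alpha_L$ is an isomorphism, I would argue in two stages. \emph{Stage 1: the case $\dim_\Bk L<\infty$.} Both sides of $\alpha_L$ are additive functors in $L$, and $\alpha_{L_1\oplus L_2}$ is identified with $\alpha_{L_1}\oplus\alpha_{L_2}$ under the canonical decompositions (using that $\GG_c(X;-)$ is additive and $F\otimes(L_1\oplus L_2)_X\simeq (F\otimes (L_1)_X)\oplus(F\otimes(L_2)_X)$, the latter because $-\otimes-$ on sheaves is exact as $\Bk$ is a field, per the remark after Definition \ref{internal hom and tensor}). Hence it suffices to treat $L=\Bk$, where $L_X=\Bk_X$, the tensor unit, and $\alpha_\Bk$ is the identity. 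This disposes of all finite-dimensional $L$. \emph{Stage 2: the general case.} Write $L=\varinjlim_i L_i$ as the filtered colimit of its finite-dimensional subspaces. Tensor product $-\otimes L$ commutes with filtered colimits on $\Vect_\Bk$; the constant-sheaf functor $a_X^{-1}(-)$ commutes with colimits (it is a left adjoint, by the remark after Definition \ref{pushforward and pullback}); sheaf tensor $F\otimes(-)$ commutes with colimits (it is a left adjoint to $\HOM(F,-)$); and the key point is that $\GG_c(X;-)=a_{X!}$ commutes with filtered colimits of sheaves. Granting that, $\alpha_L=\varinjlim_i\alpha_{L_i}$ is a filtered colimit of isomorphisms, hence an isomorphism.

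The main obstacle is the last ingredient of Stage 2: that $\GG_c(X;-)$ commutes with filtered colimits. This is where the properness/compact-support hypothesis is essential — ordinary $\GG(X;-)$ does not commute with filtered colimits on a general manifold. I would prove it by the same mechanism used in Proposition \ref{proper stalk}: a section of $\varinjlim_i F_i$ with compact support $K$ is, by compactness and the local nature of filtered colimits of sheaves (each stalk $(\varinjlim_i F_i)_x\simeq\varinjlim_i (F_i)_x$, cf.\ the discussion of stalks), represented on a finite open cover of $K$ by sections of a single $F_{i_0}$ which agree after passing to a further $F_{i_1}$ on the (finitely many) overlaps; gluing via Proposition \ref{section_representation} produces a compactly supported section of $F_{i_1}$ mapping to the given one, and injectivity is similar. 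If one prefers to avoid re-proving this, an alternative is to invoke that $X$ has a finite cover by relatively compact opens and to induct on such a cover using the excision/Mayer–Vietoris sequences for $\GG_c$, but the direct colimit argument is cleaner. Everything else — constructing $\alpha_L$, reducing to $L=\Bk$, and the compatibility with direct sums and colimits — is routine bookkeeping of the kind already carried out in the preceding propositions.
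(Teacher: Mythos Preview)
The paper does not actually prove this proposition; it is stated with a citation to \cite[Proposition 2.5.12.]{KS90} and then immediately used in the proof of Proposition \ref{projection_formula}, so there is no proof in the paper to compare against.

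Your argument is correct in outline and close to the standard one in \cite{KS90}. The usual presentation is slightly more direct: choose a basis $(e_i)_{i\in I}$ of $L$, so that $L_X\simeq\bigoplus_{i\in I}\Bk_X$ and hence $F\otimes L_X\simeq\bigoplus_{i\in I}F$; then the statement reduces to showing $\GG_c(X;\bigoplus_i F)\simeq\bigoplus_i\GG_c(X;F)$, which is exactly the fact that a compactly supported section of a direct sum has only finitely many nonzero components (a finite cover of the compact support by opens on which the section comes from the presheaf direct sum). Your two-stage reduction via finite-dimensional subspaces and filtered colimits is equivalent and arrives at the same compactness argument. One caution: your proposed alternative of taking a \emph{finite} cover of $X$ by relatively compact opens is not available when $X$ is noncompact; the direct argument via compactness of $\supp(s)$ that you outline first is the right one.
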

		
		\begin{proposition}\cite[Proposition 2.5.13.]{KS90}
		\label{projection_formula}
			For $F\in\Sh(X)$ and $G\in\Sh(Y)$, there is a natural  isomorphism
			\begin{equation}
			%\label{projection_formula_morphism}
				f_!F\otimes G\to f_!(F\otimes f^{-1}G). 
			\end{equation}
		\end{proposition}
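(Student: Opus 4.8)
The goal is a natural isomorphism $f_!F\otimes G\xrightarrow{\sim} f_!(F\otimes f^{-1}G)$. The plan is to produce a canonical morphism in this direction by formal manipulations with the adjunctions of Section \ref{fundamental operations}, and then to verify it is an isomorphism by a stalkwise computation, appealing to Proposition \ref{stalk_iso}.

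First I would construct the morphism. Since $f_!F$ is a subsheaf of $f_*F$, the counit $f^{-1}\circ f_*\to\id_{\Sh(X)}$ of \eqnref{canonical morphism for pushforward and pullback} restricts to a morphism $f^{-1}f_!F\to F$. Tensoring with $f^{-1}G$ and using Proposition \ref{tensor_pullback} to identify $f^{-1}(f_!F\otimes G)\simeq f^{-1}f_!F\otimes f^{-1}G$, I obtain a morphism $f^{-1}(f_!F\otimes G)\to F\otimes f^{-1}G$ in $\Sh(X)$; by the adjunction \eqnref{adjoint_pushforward and pullback} this corresponds to a morphism $f_!F\otimes G\to f_*(F\otimes f^{-1}G)$ in $\Sh(Y)$. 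It then remains to observe that this morphism factors through the subsheaf $f_!(F\otimes f^{-1}G)\hookrightarrow f_*(F\otimes f^{-1}G)$; since $f_!\hookrightarrow f_*$ is a monomorphism, this factorisation can be checked at the level of stalks, where it will be transparent from the description below.

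Second, I would check that the morphism is an isomorphism stalkwise. Fix $y\in Y$. Using that $(-)_y$ commutes with $\otimes$ together with Proposition \ref{proper stalk}, the left-hand stalk is $(f_!F\otimes G)_y\simeq(f_!F)_y\otimes G_y\simeq\GG_c(f^{-1}(y);F|_{f^{-1}(y)})\otimes G_y$. For the right-hand side, Proposition \ref{proper stalk} gives $(f_!(F\otimes f^{-1}G))_y\simeq\GG_c(f^{-1}(y);(F\otimes f^{-1}G)|_{f^{-1}(y)})$, and $(F\otimes f^{-1}G)|_{f^{-1}(y)}\simeq F|_{f^{-1}(y)}\otimes(f^{-1}G)|_{f^{-1}(y)}$. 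Since inverse image commutes with composition of maps, $(f^{-1}G)|_{f^{-1}(y)}$ is the inverse image of $G_y$ (a sheaf on a point, i.e. a vector space) along $f^{-1}(y)\to\{y\}$, hence the constant sheaf $(G_y)_{f^{-1}(y)}$. Proposition \ref{tensor_vector space} then yields $\GG_c(f^{-1}(y);F|_{f^{-1}(y)}\otimes(G_y)_{f^{-1}(y)})\simeq\GG_c(f^{-1}(y);F|_{f^{-1}(y)})\otimes G_y$, so both stalks are canonically $\GG_c(f^{-1}(y);F|_{f^{-1}(y)})\otimes G_y$. Unwinding the definitions shows that the morphism constructed in the first step induces precisely this composite of isomorphisms on the stalk at $y$, so Proposition \ref{stalk_iso} finishes the proof; naturality in $F$ and $G$ is immediate from the construction.

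The main obstacle is not the existence of isomorphic stalks but the bookkeeping showing that the specific morphism built in the first step induces, on each stalk, exactly the chain of canonical isomorphisms of the second step — this requires tracing the morphism through the adjunction \eqnref{adjoint_pushforward and pullback}, the identification $(f^{-1}G)|_{f^{-1}(y)}\simeq(G_y)_{f^{-1}(y)}$, and the isomorphism of Proposition \ref{tensor_vector space}. The same computation simultaneously justifies the claim in the first step that the morphism lands in the subsheaf $f_!(F\otimes f^{-1}G)$.
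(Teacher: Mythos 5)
Your proposal is correct and follows essentially the same route as the paper's own proof: construct the morphism by applying the adjunction \eqnref{adjoint_pushforward and pullback} together with Proposition \ref{tensor_pullback} to the counit $f^{-1}f_*\to\id$, then verify it is an isomorphism stalkwise using Proposition \ref{proper stalk}, the identification $(f^{-1}G)|_{f^{-1}(y)}\simeq(G_y)_{f^{-1}(y)}$, and Proposition \ref{tensor_vector space}. You are in fact somewhat more careful than the paper in two respects: you explicitly note that the adjunction naturally produces a map into $f_*(F\otimes f^{-1}G)$ and that one must still check it factors through the subsheaf $f_!(F\otimes f^{-1}G)$, a point the paper passes over with ``the morphism is induced by,'' and you keep $F$, $G$, $\Sh(X)$, $\Sh(Y)$ consistent, whereas the paper's displayed construction swaps the roles of $F$ and $G$ and mislabels several Hom-sets.
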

		\begin{proof}
			Note first that we have the canonical morphism $f_*G\otimes F\to f_*(G\otimes f^{-1}F)$ as the image of identity in $\Hom_{\Sh(Y)}(G\otimes f^{-1}F)$ by the composition of the following morphisms
			\begin{align*}
				\Hom_{\Sh(Y)}(G\otimes f^{-1}F, G\otimes f^{-1}F)&\to \Hom_{\Sh(Y)}(f^{-1}f_*G\otimes f^{-1}F, G\otimes f^{-1}F)&\\
				&\simeq\Hom_{\Sh(X)}(f_*G\otimes F, f_*(G\otimes f^{-1}F)). &
			\end{align*}
			The above isomorphism is obtained by Proposition \ref{tensor_pullback}. Then the morphism (\ref{projection_formula_morphism}) is induced by the canonical morphism $f_*G\otimes F\to f_*(G\otimes f^{-1}F)$. 
			
			In order to prove (\ref{projection_formula_morphism}) is isomorphism, let us choose $y\in Y$ and apply Lemma \ref{proper stalk}. We have
			\begin{align*}
				(f_!F\otimes G)_y&\simeq\GG_c(f^{-1}(y); G\otimes f^{-1}F|_{f^{-1}(y)})&\\
				&\simeq\GG_c(f^{-1}(y); G|_{f^{-1}(y)}\otimes (F_y)_X)&\\
				&\simeq\GG_c(f^{-1}(y); G|_{f^{-1}(y)})\otimes (F_y)&\\
				&\simeq(f_!G)_y\otimes F_y&\\
				&\simeq f_!(G\otimes f^{-1}F). &
			\end{align*}
			Since $F_y$ is a $\Bk$-vector space, the third isomorphism is induced by Proposition \ref{tensor_vector space}. 
			Then we obtain the natural  isomorphism (\ref{projection_formula_morphism}) by Proposition \ref{stalk_iso}. 
		\end{proof}
		
		%切り落とし, がんま切り落とし
		
		\begin{definition}
		\label{cut-off_def}
			Let $F$ be a sheaf on $X$. 
			\begin{enumerate}[(i)]
				\item For a closed subset $A$ of $X$ and the inclusion $j:A\hookrightarrow X$, one defines 
				\begin{equation}
					F_A:=j_*j^{-1}F. 
				\end{equation}
				Then we have a natural morphism $F\to F_A$. 
				\item For an open subset $W$ of $X$, one defines
				\begin{equation}
					F_W:= \Ker(F\to F_{X\bs W}). 
				\end{equation}
				\item For a locally closed subset $Z$ of $X$ written by $Z=A\cap W$, where $A$ is a closed subset of $X$ and $U$ is an open subset of $X$. Then one defines
				\begin{equation}
					F_Z:=(F_A)_W. 
				\end{equation}
			\end{enumerate}
		\end{definition}
		
		For a locally closed subset $Z$ of $X$, we simply write $\Bk_Z$ instead of $(\Bk_X)_Z$ in $\Sh(X)$, where $\Bk_X$ is a constant sheaf on $X$. 
		
		\begin{remark}
			Let $U$ be an open subset of $X$. 
			\begin{enumerate}[(i)]
				\item For a locally closed subset $Z$ of $X$, one has
				\begin{equation}
				\label{cut-off_restriction}
					\begin{cases}
						F_Z|_Z\simeq F|_Z, &\\
						F_Z|_{X\bs Z}=0. &
					\end{cases}
				\end{equation}
				In particular, $(F_Z)_x\simeq F_x$ if $x\in Z$ and $(F_Z)_x=0$ if $x\not\in Z$. Moreover if $F^\prime\in\Sh(X)$ satisfies $F^\prime|_Z\simeq F|_Z$ and $F^\prime|_{X\bs Z}\simeq F|_{X\bs Z}$, there exists an isomorphism $F_Z\simeq F^\prime$. 
				\item For a closed subset $A$ of $X$, the sheaf $F_A$ is the associated sheaf of the following presheaf
				\begin{equation}
				%\label{cut-off_restriction}
					U\mapsto
					\begin{cases}
						F(U)&\text{ if }U\cap A\neq\emptyset, \\
						0&\ow. 
					\end{cases}
				\end{equation}
				\item For an open subset $W$ of $X$, the sheaf $F_W$ satisfies
				\begin{equation}
				F_W(U)\simeq
					\begin{cases}
						F(U)&\text{ if }U\subset W, \\
						0&\ow. 
					\end{cases}
				\end{equation}
			\end{enumerate}
		\end{remark}
		
		\begin{proposition}\cite[Proposition 2.5.4.]{KS90}
		\label{i!}
			Let $Z$ be a locally closed of $X$ and let $i:Z\hookrightarrow X$ be the inclusion. 
			\begin{enumerate}[(i)]
				\item The functor $i_!$ is exact. 
				\item Let $F$ be a sheaf on $X$. Then
				\[
					F_Z\simeq i_!i^{-1}F. 
				\]
			\end{enumerate}
		\end{proposition}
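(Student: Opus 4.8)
The plan is to obtain (i) from a stalk computation and to prove (ii) by reducing to the two extreme cases in which $Z$ is open and in which $Z$ is closed. \emph{Part (i).} For $G\in\Sh(Z)$ and $y\in X$, Proposition~\ref{proper stalk} gives $(i_!G)_y\simeq\GG_c(i^{-1}(y);G|_{i^{-1}(y)})$. Since $i$ is an inclusion, $i^{-1}(y)$ is a one-point space when $y\in Z$ and is empty otherwise, so $(i_!G)_y\simeq G_y$ for $y\in Z$ and $(i_!G)_y=0$ for $y\notin Z$, naturally in $G$. Because exactness of a sequence of sheaves on $X$ can be checked stalkwise and the functors $(-)_y$ are exact (Proposition~\ref{stalk_exact}), $i_!$ carries a short exact sequence in $\Sh(Z)$ to a sequence on $X$ that is exact at every stalk, hence exact; thus $i_!$ is exact.

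\emph{Part (ii), the two extreme cases.} If $Z=A$ is closed with inclusion $i_A$, then $F_A=i_{A*}i_A^{-1}F$ by Definition~\ref{cut-off_def}, and since a closed embedding is a proper map, $i_A$ restricts to a proper map on the support of every section of $i_{A*}G$; hence $i_{A!}=i_{A*}$ and $F_A\simeq i_{A!}i_A^{-1}F$. If $Z=W$ is open with inclusion $j_W$, I would build the canonical morphism $\Gvf\colon j_{W!}j_W^{-1}F\to F$ by hand: a section of $j_{W!}j_W^{-1}F$ over an open $U$ is a section $s\in F(W\cap U)$ whose support is closed in $U$, and gluing $s$ with the zero section of $F$ over $U\bs\supp(s)$ yields a section of $F$ over $U$, compatibly with restrictions. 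Proposition~\ref{proper stalk} shows the stalk of $j_{W!}j_W^{-1}F$ at $x$ is $F_x$ for $x\in W$ and $0$ otherwise, while Corollary~\ref{push stalk} shows the stalk of $F_{X\bs W}=j_*j^{-1}F$ at $x$ is $F_x$ for $x\notin W$ and $0$ otherwise; hence $F\to F_{X\bs W}$ is surjective on stalks and $0\to F_W\to F\to F_{X\bs W}\to 0$ is exact. The composite $j_{W!}j_W^{-1}F\xrightarrow{\ \Gvf\ }F\to F_{X\bs W}$ vanishes on every stalk, hence vanishes (Proposition~\ref{stalk_iso}), so $\Gvf$ factors through $F_W=\Ker(F\to F_{X\bs W})$, and the induced morphism $j_{W!}j_W^{-1}F\to F_W$ is an isomorphism on all stalks, hence an isomorphism by Proposition~\ref{stalk_iso}.

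\emph{Part (ii), the general case, and the main obstacle.} For arbitrary locally closed $Z$, write $Z=A\cap W$ with $A$ closed and $W$ open, so $F_Z=(F_A)_W$ by Definition~\ref{cut-off_def}. Applying the open case to the sheaf $F_A$ gives $(F_A)_W\simeq j_{W!}j_W^{-1}(F_A)=j_{W!}j_W^{-1}i_{A!}i_A^{-1}F$; since the square on the vertices $Z,W,A,X$ with its four inclusions is Cartesian, base change (Proposition~\ref{cartesian_commutative}) gives $j_W^{-1}i_{A!}\simeq k_!\,l^{-1}$ with $k\colon Z\hookrightarrow W$ and $l\colon Z\hookrightarrow A$, and since $i_A\circ l$ is the inclusion $i\colon Z\hookrightarrow X$ this yields $F_Z\simeq j_{W!}k_!\,i^{-1}F\simeq i_!i^{-1}F$, the last step using $(j_W\circ k)_!\simeq j_{W!}\circ k_!$. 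I expect the open case to be the delicate step: the morphism $\Gvf$ has to be produced explicitly, because the adjunction $(i_!,i^!)$ is not yet available, and one must verify both that it is a well-defined morphism of sheaves and that it identifies its source with $F_W$ as an actual isomorphism rather than merely matching stalks. The composition formula $(g\circ f)_!\simeq g_!\circ f_!$ used at the end is standard and can, if necessary, be verified by the same fibre computation as in (i).
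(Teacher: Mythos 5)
Your argument is correct, and it is considerably more careful than the paper's. For part~(i), both you and the paper compute $(i_!G)_x$ stalkwise and appeal to Proposition~\ref{stalk_exact}; your use of Proposition~\ref{proper stalk} to evaluate the fibre $\GG_c(i^{-1}(x);\,\cdot\,)$ just makes explicit the computation the paper suppresses. For part~(ii), the paper simply asserts that $i_!i^{-1}F$ and $F_Z$ have the same restrictions to $Z$ and to $X\bs Z$ and then cites, implicitly, the unproved Remark following Definition~\ref{cut-off_def} (which, as stated, has a typo and is in any case not a complete argument: matching restrictions alone do not produce an isomorphism of sheaves --- one first needs a morphism to compare). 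You instead reduce to the two extreme cases and supply exactly the missing ingredient. The closed case is immediate from $i_{A!}=i_{A*}$; in the open case you build the morphism $j_{W!}j_W^{-1}F\to F$ explicitly by gluing a compactly-supported (i.e.\ closed-supported) section with zero, show it factors through $F_W=\Ker(F\to F_{X\bs W})$ by a stalk computation, and only then invoke Proposition~\ref{stalk_iso} to conclude it is an isomorphism. The general locally closed case then follows by the base-change formula of Proposition~\ref{cartesian_commutative} applied to the Cartesian square on $Z,W,A,X$, together with $(j_W\circ k)_!\simeq j_{W!}\circ k_!$. This is essentially the approach of Kashiwara--Schapira's own proof, and it is more self-contained than what the paper writes: the paper buys brevity at the cost of leaning on an unjustified Remark, whereas your version produces the canonical comparison morphism before checking stalks, which is logically the right order. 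Two small points worth adding for completeness: verify that your hand-built map $\Gvf_U$ is compatible with restriction morphisms (you flag this but do not carry it out), and note explicitly that properness of $j_W$ on $\supp(s)$ is equivalent to $\supp(s)$ being closed in $U$, which is the reason a section of $j_{W!}j_W^{-1}F(U)$ is precisely one with closed support in $U$.
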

		\begin{proof}
			(i) If $x\in Z$, $(i_!F)_x\simeq F_x$ and if $x\not\in Z$, $(i_!F)_x=0$. Therefore $i_!$ is exact. 
			
			(ii) We have $(i_!i^{-1}F)|_{Z}\simeq F|_Z$ and $(i_!i^{-1}F)|_{X\bs Z}=0$. Therefore we obtain $F_Z\simeq i_!i^{-1}F$. 
		\end{proof}
		
		\begin{proposition}\cite[Proposition 2.3.6.(iii)]{KS90}
		\label{cut-off_commutative}
			Let $Z$ and $Z^\prime$ be two locally closed subsets of $X$. Let $F$ be a sheaf on $X$. There is a natural isomorphism
			\[
				F_{Z\cap Z^\prime}\simeq (F_Z)_{Z^\prime}. 
			\]
		\end{proposition}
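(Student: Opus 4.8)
The plan is to rewrite every cut-off appearing in the statement as an extension-by-zero composed with a restriction, using Proposition \ref{i!}(ii), and then to collapse the resulting composition by the base change isomorphism of Proposition \ref{cartesian_commutative}. Introduce the inclusions $i\colon Z\hookrightarrow X$, $i'\colon Z'\hookrightarrow X$, $j\colon Z\cap Z'\hookrightarrow X$, $k\colon Z\cap Z'\hookrightarrow Z'$ and $k'\colon Z\cap Z'\hookrightarrow Z$, so that $i\circ k'=i'\circ k=j$. The geometric point is that
\[
\xymatrix{
Z\cap Z'\ar[r]^-{k}\ar[d]_{k'}&Z'\ar[d]^{i'}\\
Z\ar[r]_-{i}\ar@{}[ur]|{\square}&X
}
\]
is a Cartesian square, because the fibre product $Z\times_X Z'$, equipped with the subspace topologies, is precisely $Z\cap Z'$. (An alternative is to invoke $F_Z\simeq F\otimes\Bk_Z$, which follows from Proposition \ref{i!}(ii) and the projection formula, Proposition \ref{projection_formula}, and thereby reduce the claim to $\Bk_Z\otimes\Bk_{Z'}\simeq\Bk_{Z\cap Z'}$; but that identity ultimately rests on the same stalk computation as below, so I would take the route above.)

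First I would record, via Proposition \ref{i!}(ii), the natural isomorphisms $F_Z\simeq i_!i^{-1}F$, $(F_Z)_{Z'}\simeq i'_!\,i'^{-1}(F_Z)$ and $F_{Z\cap Z'}\simeq j_!\,j^{-1}F$. Substituting the first into the second gives $(F_Z)_{Z'}\simeq i'_!\,i'^{-1}i_!\,i^{-1}F$. Applying Proposition \ref{cartesian_commutative} to the Cartesian square above yields the natural isomorphism $i'^{-1}\circ i_!\simeq k_!\circ k'^{-1}$; inserting this and using the functoriality of $(-)_!$ and $(-)^{-1}$ together with $i'\circ k=i\circ k'=j$, I obtain
\[
(F_Z)_{Z'}\simeq i'_!\,k_!\,k'^{-1}i^{-1}F\simeq (i'\circ k)_!\,(i\circ k')^{-1}F=j_!\,j^{-1}F\simeq F_{Z\cap Z'},
\]
all isomorphisms being natural in $F$. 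This proves the proposition.

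The step I expect to be the main obstacle is the appeal to Proposition \ref{cartesian_commutative}: the spaces $Z$, $Z'$ and $Z\cap Z'$ are only locally closed subsets of $X$, hence locally compact spaces rather than smooth manifolds, so one is slightly outside the stated hypotheses. Two remedies are available. One may simply use the proper base change theorem in the generality of locally compact spaces, which is the form actually proved in \cite[Proposition 2.5.11]{KS90}, since $Z$, $Z'$, $Z\cap Z'$ are locally compact because $X$ is. The more self-contained route, which I would spell out, is to note that the base change \emph{morphism} constructed in the proof of Proposition \ref{cartesian_commutative} (the analogue of (\ref{cartesian_morphism2})) is defined for arbitrary continuous maps, and hence already furnishes a canonical morphism $(F_Z)_{Z'}\to F_{Z\cap Z'}$; one then checks it is an isomorphism on stalks. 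By the Remark following Definition \ref{cut-off_def}, for $x\in X$ one has $((F_Z)_{Z'})_x\simeq F_x$ if $x\in Z\cap Z'$ and $0$ otherwise, and likewise $(F_{Z\cap Z'})_x\simeq F_x$ if $x\in Z\cap Z'$ and $0$ otherwise, so Proposition \ref{stalk_iso} concludes. The delicate point is therefore exactly this stalk verification: exhibiting a canonical comparison morphism a priori is precisely what the base change construction provides.
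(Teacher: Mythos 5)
Your proof is correct and follows the same route as the paper: rewrite $(F_Z)_{Z'}$ via Proposition \ref{i!}(ii) as $i'_!\,i'^{-1}i_!\,i^{-1}F$, then collapse the middle two functors with the base-change isomorphism of Proposition \ref{cartesian_commutative} applied to the Cartesian square formed by the four inclusions. Your additional caveat --- that $Z$, $Z'$, $Z\cap Z'$ are only locally closed subsets rather than manifolds, so one must either invoke the locally compact version of base change or check the comparison morphism on stalks --- is a genuine refinement the paper glosses over, but it does not alter the structure of the argument.
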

		\begin{proof}
			Consider the following Cartesian square
			\begin{equation*}
				\xymatrix{
				Z\cap Z^\prime\ar[r]^{j}\ar[d]_{j^\prime}&Z^\prime\ar[d]^{i^\prime}\\
				Z\ar[r]_{i}\ar@{}[ur]|{\square}&X, 
				}
			\end{equation*}
			where four morphisms are inclusions. By Proposition \ref{cartesian_commutative} and Proposition \ref{i!}, we have
			\begin{align*}
				(F_Z)_{Z^\prime}\simeq i^\prime_!i^{\prime-1}i_!i^{-1}F
				\simeq i^\prime_!j_!j^{\prime-1}i^{-1}F
				\simeq F_{Z\cap Z^\prime}. 
			\end{align*}
		\end{proof}
		
		\begin{proposition}\cite[Remark 2.3.11.]{KS90}
		\label{cut-off_pullback}
			Let $Z$ be a locally closed subset of $Y$. There is a natural isomorphism
			\[
				f^{-1}F_Z\simeq (f^{-1}F)_{f^{-1}(Z)}. 
			\]
		\end{proposition}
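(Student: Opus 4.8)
The plan is to recognize the cut-off functor $(-)_Z$ as a composition of the elementary operations already studied and to slide $f^{-1}$ past it using the base change isomorphism of Proposition \ref{cartesian_commutative}, exactly as was done in the proof of Proposition \ref{cut-off_commutative}. Let $F\in\Sh(Y)$, write $i:Z\hra Y$ and $i^\prime:f^{-1}(Z)\hra X$ for the inclusions, and $f^\prime:=f|_{f^{-1}(Z)}:f^{-1}(Z)\to Z$ for the restriction of $f$; note $f^{-1}(Z)$ is locally closed in $X$ since $Z$ is locally closed in $Y$. The key observation is that the square
\[
\xymatrix{
f^{-1}(Z)\ar[r]^{i^\prime}\ar[d]_{f^\prime}&X\ar[d]^{f}\\
Z\ar[r]_{i}\ar@{}[ur]|{\square}&Y
}
\]
is Cartesian, and that by Proposition \ref{i!} one has $F_Z\simeq i_!i^{-1}F$ and $(f^{-1}F)_{f^{-1}(Z)}\simeq i^\prime_!i^{\prime-1}(f^{-1}F)$.

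Granting this, the computation is short. First I would apply Proposition \ref{cartesian_commutative} to the displayed Cartesian square to obtain the natural isomorphism of functors $f^{-1}\circ i_!\simeq i^\prime_!\circ f^{\prime-1}$. Next, from the commutativity $f\circ i^\prime=i\circ f^\prime$ and functoriality of the inverse image one gets $f^{\prime-1}\circ i^{-1}\simeq i^{\prime-1}\circ f^{-1}$. Chaining these together,
\[
f^{-1}F_Z\simeq f^{-1}i_!i^{-1}F\simeq i^\prime_!f^{\prime-1}i^{-1}F\simeq i^\prime_!i^{\prime-1}f^{-1}F\simeq (f^{-1}F)_{f^{-1}(Z)},
\]
and every isomorphism used is natural in $F$, which gives the asserted natural isomorphism.

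I do not expect a genuine obstacle here: the statement is a formal consequence of base change. The only point worth a remark is that $i$ and $i^\prime$ are inclusions of locally closed subsets rather than maps of manifolds, so one is using Proposition \ref{cartesian_commutative} in the mildly broadened form allowing such inclusions as horizontal arrows — but this is precisely the usage already made in the proof of Proposition \ref{cut-off_commutative}. If one wishes to avoid even this, the proof can instead proceed by the reduction $Z=A\cap W$ with $A$ closed and $W$ open in $Y$: for $A$ closed, $F_A=j_*j^{-1}F$ with $j$ a closed (hence proper) embedding, so $j_!=j_*$ and base change over the Cartesian square above $A$ applies directly; for $W$ open, one uses exactness of $f^{-1}$ on $F_W=\Ker(F\to F_{Y\bs W})$ together with the closed case applied to $Y\bs W$; and finally Proposition \ref{cut-off_commutative} recombines $F_Z\simeq(F_A)_W$ and $(f^{-1}F)_{f^{-1}(Z)}\simeq\bigl((f^{-1}F)_{f^{-1}(A)}\bigr)_{f^{-1}(W)}$ to conclude.
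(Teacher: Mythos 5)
Your proposal is correct and matches the paper's proof essentially verbatim: both use the Cartesian square formed by the inclusion $Z\hra Y$ and the restriction $f|_{f^{-1}(Z)}$, apply the base change isomorphism of Proposition \ref{cartesian_commutative} together with the description $F_Z\simeq i_!i^{-1}F$ from Proposition \ref{i!}, and chain the resulting isomorphisms. The alternative reduction via $Z=A\cap W$ is a reasonable backup but unnecessary here.
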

		\begin{proof}
			Let $i:f^{-1}(Z)\hookrightarrow X$ and $j:Z\hookrightarrow Y$ denote the inclusion. Consider the following Cartesian square
			\begin{equation*}
				\xymatrix{
				f^{-1}(Z)\ar[r]^{i}\ar[d]_{f|_{f^{-1}(Z)}}&X\ar[d]^{f}\\
				Z\ar[r]_{j}\ar@{}[ur]|{\square}&Y.  
				}
			\end{equation*}
			By Proposition \ref{cartesian_commutative}, we have
			\begin{align*}
				f^{-1}(F_Z)&= f^{-1}j_!j^{-1}F
				\simeq i_!f|_{f^{-1}(Z)}^{-1}j^{-1}F
				\simeq i_!i^{-1}f^{-1}F
				\simeq (f^{-1}F)_{f^{-1}(Z)}. 
			\end{align*}
		\end{proof}
		
		\begin{proposition}\cite[Proposition 2.3.10.]{KS90}
		\label{cut-off_tensor}
			Let $Z$ be a locally closed subset of $X$. For a sheaf $F$ on $X$, there is a natural isomorphism
			\[
				F\otimes \Bk_{Z}\simeq F_Z. 
			\]
		\end{proposition}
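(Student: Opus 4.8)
The plan is to reduce the claim to the proper-direct-image description of the cut-off functor together with the projection formula. Let $i\colon Z\hookrightarrow X$ be the inclusion of the locally closed subset $Z$. By Proposition \ref{i!}(ii) applied to the constant sheaf $\Bk_X$ one has $\Bk_Z=(\Bk_X)_Z\simeq i_!i^{-1}\Bk_X$, and the same proposition applied to $F$ gives $F_Z\simeq i_!i^{-1}F$; so it is enough to produce a natural isomorphism $F\otimes i_!i^{-1}\Bk_X\simeq i_!i^{-1}F$.

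First I would apply the projection formula (Proposition \ref{projection_formula}) to the continuous map $i\colon Z\to X$, taking $i^{-1}\Bk_X\in\Sh(Z)$ in the role of the sheaf on the source and $F\in\Sh(X)$ in the role of the sheaf on the target; combined with the commutativity of $\otimes$ this yields
\[
F\otimes i_!i^{-1}\Bk_X\;\simeq\;i_!\bigl(i^{-1}\Bk_X\otimes i^{-1}F\bigr).
\]
Then Proposition \ref{tensor_pullback} rewrites $i^{-1}\Bk_X\otimes i^{-1}F\simeq i^{-1}(\Bk_X\otimes F)$, and since $\Bk_X$ is the unit for $\otimes$ — which one checks on stalks, $(\Bk_X\otimes F)_x\simeq\Bk\otimes F_x\simeq F_x$, so that Proposition \ref{stalk_iso} gives $\Bk_X\otimes F\simeq F$ — we get $i^{-1}(\Bk_X\otimes F)\simeq i^{-1}F$. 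Composing these canonical isomorphisms produces $F\otimes\Bk_Z\simeq i_!i^{-1}F\simeq F_Z$, and every step is functorial in $F$, so the isomorphism is natural.

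I do not expect a genuine obstacle: the only things to watch are the bookkeeping in the projection formula, which is applied to the map $i\colon Z\to X$ going \emph{into} $X$ rather than to a map out of $X$, and the availability of the unit isomorphism $\Bk_X\otimes F\simeq F$, which is the one ingredient not recorded verbatim earlier but is immediate from the stalk computation above. An alternative, more hands-on route would sidestep the projection formula: writing $Z=A\cap W$ with $A$ closed and $W$ open, one treats the closed case $F\otimes\Bk_A\simeq F_A$ and the open case $F\otimes\Bk_W\simeq F_W$ separately, building the natural morphisms out of $\Bk_X\to\Bk_A$ and $\Bk_W\to\Bk_X$ and verifying them on stalks via the remark following Definition \ref{cut-off_def}, and then glues the two cases using Proposition \ref{cut-off_commutative} and the instance $\Bk_{Z\cap Z'}\simeq((\Bk_X)_Z)_{Z'}$ of that proposition for $\Bk_X$; I would keep the projection-formula argument as the main line.
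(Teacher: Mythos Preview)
Your argument via the projection formula is correct. The paper takes a shorter, different route: it simply checks that $(F\otimes\Bk_Z)|_Z\simeq F|_Z$ (using Proposition~\ref{tensor_pullback}) and $(F\otimes\Bk_Z)|_{X\setminus Z}=0$, and then invokes the characterization of $F_Z$ recorded in the remark after Definition~\ref{cut-off_def}. So the paper's proof is essentially the restriction/stalk check that you sketch as your ``alternative, more hands-on route'', except that it does not bother to split $Z$ into a closed and an open piece --- the remark applies directly to any locally closed $Z$. What your main line buys is that naturality in $F$ is manifest at every step, whereas the paper's restriction argument leans on the uniqueness clause in that remark (which, as stated there, is a little informal about how the isomorphism is pinned down). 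Conversely, the paper's proof avoids the bookkeeping of the projection formula and the unit isomorphism, reducing everything to two one-line restriction computations.
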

		\begin{proof}
			By Proposition \ref{tensor_pullback}, we have $(F\otimes \Bk_{Z})|_{Z}\simeq F|_Z$ and $(F\otimes \Bk_{Z})|_{X\bs Z}=0$. Therefore we obtain $F\otimes \Bk_{Z}\simeq F_Z$. 
		\end{proof}
		
		\begin{proposition}\cite[Proposition 2.3.6.(v)]{KS90}
		\label{short exact}
			%\begin{enumerate}[(i)]
				%\item 
				Let $Z$ be a locally closed subset of $X$ and $Z^\prime$ be a closed subset of $Z$. For $F\in\Sh(X)$, one obtains the following exact sequence
				\begin{equation*}
					0\to F_{Z\bs Z^\prime}\to F_Z\to F_{Z^\prime}\to0. 
				\end{equation*}
				%\item For a locally closed subset $Z\subset X$, $(-)_Z:\Sh(X)\to \Sh(X)$ is exact. 
				%\item For $F\in\Sh(X)$, if $F^\prime\in\Sh(X)$ satisfies (\ref{cut-off_restriction}), $F_Z\simeq F^\prime$. 
			%\end{enumerate}
		\end{proposition}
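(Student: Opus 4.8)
The plan is to produce the desired sequence as the image, under the exact functor $i_!$, of the elementary ``open--closed'' short exact sequence living on $Z$ itself, where $i:Z\hookrightarrow X$ is the inclusion.

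Since $Z^\prime$ is closed in $Z$, the set $W:=Z\bs Z^\prime$ is open in $Z$; write $j:W\hookrightarrow Z$ and $k:Z^\prime\hookrightarrow Z$ for the open and closed inclusions and set $G:=i^{-1}F\in\Sh(Z)$. Definition \ref{cut-off_def}(ii), applied on $Z$, gives $G_W=\Ker(G\to G_{Z^\prime})$, so the sequence $0\to G_W\to G\to G_{Z^\prime}$ is exact at the first two terms. Moreover $G\to G_{Z^\prime}$ is an epimorphism: on stalks it is, up to the canonical identification $(G_{Z^\prime})_x\simeq G_x$ for $x\in Z^\prime$ from the Remark after Definition \ref{cut-off_def}, the identity $G_x\to G_x$ when $x\in Z^\prime$ and the zero map when $x\in W$, so it is surjective on all stalks and hence surjective by Proposition \ref{stalk_exact}. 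Thus
\[
0\to G_W\to G\to G_{Z^\prime}\to 0
\]
is exact in $\Sh(Z)$.

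Next I would apply $i_!$, which is exact by Proposition \ref{i!}(i), and identify the three resulting terms in $\Sh(X)$. By Proposition \ref{i!}(ii), $i_!G=i_!i^{-1}F=F_Z$. Applying Proposition \ref{i!}(ii) again, now inside $\Sh(Z)$, gives $G_W=j_!j^{-1}G$ and $G_{Z^\prime}=k_!k^{-1}G$; since $i\circ j$ and $i\circ k$ are the inclusions of $W$ and $Z^\prime$ into $X$ (both locally closed in $X$), and proper direct image is functorial under composition, we obtain $i_!G_W=(i\circ j)_!(i\circ j)^{-1}F=F_{Z\bs Z^\prime}$ and likewise $i_!G_{Z^\prime}=F_{Z^\prime}$. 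Feeding these identifications into the exact sequence produced by $i_!$ yields exactly $0\to F_{Z\bs Z^\prime}\to F_Z\to F_{Z^\prime}\to 0$.

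No serious obstacle arises; the two spots needing a little care are the epimorphism claim for $G\to G_{Z^\prime}$ (handled by the stalk computation above together with Proposition \ref{stalk_exact}) and the compatibility $i_!\circ k_!=(i\circ k)_!$ of proper direct images under composition, which is elementary and standard. As an alternative that stays inside $\Sh(X)$, one may instead build the two morphisms directly: writing $Z^\prime=Z\cap A^\prime$ and $Z\bs Z^\prime=Z\cap U^\prime$ with $A^\prime$ closed and $U^\prime$ open in $X$, Proposition \ref{cut-off_commutative} identifies $F_{Z^\prime}\simeq (F_Z)_{A^\prime}$ and $F_{Z\bs Z^\prime}\simeq (F_Z)_{U^\prime}$, and the natural morphisms $F_Z\to (F_Z)_{A^\prime}$ and $(F_Z)_{U^\prime}\hookrightarrow F_Z$ of Definition \ref{cut-off_def} provide the arrows $F_Z\to F_{Z^\prime}$ and $F_{Z\bs Z^\prime}\to F_Z$; exactness at each of the three spots then follows from the same three-case stalk analysis ($x\in Z\bs Z^\prime$, $x\in Z^\prime$, $x\notin Z$) via Proposition \ref{stalk_exact}.
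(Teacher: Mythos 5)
Your primary argument is correct but genuinely different from the paper's. The paper simply constructs the natural morphisms $F_{Z\bs Z^\prime}\to F_Z\to F_{Z^\prime}$ (via Proposition~\ref{cut-off_commutative} and the natural arrows of Definition~\ref{cut-off_def}) and verifies exactness by computing stalks, citing the stalk-exactness criterion (the paper's one-line justification says Proposition~\ref{stalk_iso}, though Proposition~\ref{stalk_exact} is really what is being invoked). Your primary route instead establishes the short exact sequence on $Z$ itself from the definition $G_W=\Ker(G\to G_{Z^\prime})$ plus a stalkwise surjectivity check, and then transports it to $X$ through the exact functor $i_!$, using $i_!\circ j_!=(i\circ j)_!$ and Proposition~\ref{i!}(ii) to identify the three terms. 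This is a clean structural argument; what it buys is that the exactness check happens once, in the simplest possible two-stratum setting on $Z$, after which everything is carried along by an exact functor. The mild cost is that you must verify $Z\bs Z^\prime$ and $Z^\prime$ are indeed locally closed in $X$ (they are, since $Z$ is locally closed and $Z^\prime$ is closed in $Z$) and invoke compatibility of $(-)_!$ with composition, which the paper does not record as a lemma but which is standard. Your alternative argument, checking exactness at each stalk in the three cases $x\in Z\bs Z^\prime$, $x\in Z^\prime$, $x\notin Z$ via Propositions~\ref{cut-off_commutative} and~\ref{stalk_exact}, coincides with the paper's intended proof.
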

		\begin{proof}
			This follows from Proposition \ref{stalk_iso}. 
		\end{proof}

		\begin{definition}
		\label{relative homology_def}
			Let $F$ be a sheaf on $X$. For a locally closed subset $Z$ of $X$, let $j:Z\hookrightarrow X$ denote the inclusion. 
			\begin{enumerate}[(i)]
				%\item We set $F_Z:=j_!j^{-1}F\in\Sh(X)$. 
				\item For any open subset $U\subset X$, one defines $\GG_Z(U; F)$ as the vector space $\Ker(F(V)\to F(V\bs Z))$, where $V$ is an open subset of $U$ containing $Z$ as a closed subset. 
				\item One defines $\GG_Z(F)$ as a presheaf $U\mapsto \GG_{Z\cap U}(U; F)$ on $X$. 
			\end{enumerate}	
		\end{definition}
		
		\begin{remark}
			Consider the above setting. 
			\begin{enumerate}[(i)]
				\item For any pair of open subsets $V\subset U$ containing $Z$ as a closed subset, the canonical morphism $\GG_Z(U; F)\to \GG_Z(V; F)$ is an isomorphism. Therefore $\GG_Z(U; F)$ is independent of the choice of $V$. 
				\item Since the presheaf $U\mapsto \GG_{Z\cap U}(U; F)$ satisfies the gluing condition, it is actually a sheaf on $X$. 
				\item The functor $(-)_Z$ is a left adjoint to $\GG_Z(-)$ and $\GG_Z$ is a right adjoint to $(-)_Z$. In addition, $(-)_Z:\Sh(X)\to\Sh(X)$ is an exact functor but $\GG_Z:\Sh(X)\to\Sh(X)$ is only left exact. 
			\end{enumerate}
		\end{remark}

	\subsection{Derived category}
	\label{derived category}\phantom{a}
		We  shall henceforth consider the derive category of the abelian category. We denote by $D(X)$ the derived category of $\Sh(X)$ and by $D^b(X)$ the bounded derived category of that. The category $D^b(X)$ is a full triangulated subcategory of $D(X)$. If there is no risk of confusion, an object $F$ of $D^b(X)$ is simply called a sheaf instead of a complex of sheaves. 
		
		In order to define the derived functors of the functors appeared in Section.\ref{fundamental operations}, we consider that $Z\subset X$ is a locally closed subset and $f:X\to Y$ is a continuous map of finite dimensional manifolds. Since $\Sh(X)$ has enough injectives, we can obtain derived functors from all left exact functors:
		\[
			 R\GG(Z; -), R\GG_{Z}(X; -), R\GG_Z(-), Rf_*, \RHom(-, -), \RHOM(-, -), Rf_!, R\GG_c(X; -). 
		\]
		Moreover, since the functors $(-)_Z, f^{-1}, (-)\otimes(-)$ are exact, they are extended to derived categories. 
		
		We show some propositions for derived categories of sheaves. 
		
		\begin{proposition}\cite[Proposition 2.6.7.]{KS90}
		\label{cartesian_commutative_derived.ver}
			Consider the Cartesian square (\ref{cartesian_square}). There is a canonical isomorphism of functors
			\[
				g^{-1}\circ Rf_!\simeq Rf^{\prime}_!\circ g^{\prime-1}. 
			\]
		\end{proposition}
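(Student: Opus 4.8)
The plan is to deduce the derived statement from the non-derived base-change isomorphism of Proposition~\ref{cartesian_commutative} by resolving the argument with sheaves that simultaneously compute $Rf_!$ and behave well under the inverse image $g'^{-1}$ — namely, $f$-soft sheaves. First I would recall the standard mechanism: for $F\in D^b(X)$ one may choose a bounded complex $J^\bullet$ of $f$-soft sheaves (for instance $c$-soft sheaves, which are automatically $f$-soft and which exist in sufficient supply on the finite-dimensional manifold $X$) together with a quasi-isomorphism $F\xrightarrow{\ \sim\ }J^\bullet$; then $Rf_!F\simeq f_!J^\bullet$, using that $f$-soft sheaves are $f_!$-acyclic and that $f_!$ is exact on complexes of $f$-soft sheaves. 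Since $g'^{-1}$ is exact, $g'^{-1}F\xrightarrow{\ \sim\ }g'^{-1}J^\bullet$ is again a quasi-isomorphism, so the point will be to show that $g'^{-1}J^\bullet$ still computes $Rf'_!(g'^{-1}F)$ and that the non-derived base change applies degreewise.

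The key lemma I need is that the base-change inverse image $g'^{-1}$ carries $f$-soft sheaves to $f'$-soft sheaves, and this is where the Cartesian hypothesis on the square $(\ref{cartesian_square})$ enters. For $y'\in Y'$, the map $g'$ restricts to a homeomorphism $f'^{-1}(y')\xrightarrow{\ \sim\ }f^{-1}(g(y'))$, and by functoriality of inverse image along the commuting square of inclusions $f'^{-1}(y')\hookrightarrow X'$, $f^{-1}(g(y'))\hookrightarrow X$ one gets that $(g'^{-1}F)|_{f'^{-1}(y')}$ is the pullback of the $c$-soft sheaf $F|_{f^{-1}(g(y'))}$ along that homeomorphism, hence is $c$-soft. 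Therefore $g'^{-1}J^\bullet$ is a bounded complex of $f'$-soft sheaves and $Rf'_!(g'^{-1}F)\simeq f'_!(g'^{-1}J^\bullet)$.

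It then remains to apply the non-derived isomorphism $g^{-1}\circ f_!\simeq f'_!\circ g'^{-1}$ of Proposition~\ref{cartesian_commutative} degreewise to $J^\bullet$ and chain the identifications
\[
g^{-1}\circ Rf_!F\;\simeq\;g^{-1}f_!J^\bullet\;\simeq\;f'_!g'^{-1}J^\bullet\;\simeq\;Rf'_!\circ g'^{-1}F .
\]
Naturality in $F$ follows from the naturality of the non-derived base-change map together with the functoriality of $f$-soft resolutions, upgrading this to an isomorphism of functors. I expect the only genuinely delicate point to be the key lemma — stability of $f$-softness under $g'^{-1}$ — together with the bookkeeping guaranteeing that one stays inside $D^b$ (boundedness of the $f$-soft resolution, which rests on $f$ having finite $c$-soft dimension on a finite-dimensional manifold).

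As an alternative route for the verification once a morphism $g^{-1}\circ Rf_!\to Rf'_!\circ g'^{-1}$ is in hand, one may check it is an isomorphism on stalks: $(g^{-1}Rf_!F)_{y'}\simeq(Rf_!F)_{g(y')}$ is computed by the derived proper base change to a point (the derived analogue of Proposition~\ref{proper stalk}), and the homeomorphism $g':f'^{-1}(y')\to f^{-1}(g(y'))$ identifies it with $(Rf'_!g'^{-1}F)_{y'}$, exactly mirroring the stalk computation that closes the proof of Proposition~\ref{cartesian_commutative}; one then concludes by Proposition~\ref{stalk_iso}.
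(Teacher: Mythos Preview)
Your proposal is correct and follows the same approach as the paper: both deduce the derived statement from the non-derived base-change isomorphism of Proposition~\ref{cartesian_commutative}. The paper's own proof is the single sentence ``This proposition is obtained by Proposition~\ref{cartesian_commutative}'' (with a small label typo), whereas you have spelled out the standard mechanism---resolving by $f$-soft sheaves, checking that $g'^{-1}$ preserves $f$-softness via the fiberwise homeomorphism $f'^{-1}(y')\simeq f^{-1}(g(y'))$, and applying the non-derived isomorphism degreewise---which is precisely what is needed to justify that one-line reference.
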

		
		This proposition is obtained by Proposition \ref{cartesian_square}. 
		
		\begin{proposition}\cite[Proposition 2.6.6]{KS90}
		\label{projection_formula_derived.ver}
			For $F\in D^b(X)$ and $G\in D^b(Y)$, there is a natural  isomorphism
			\begin{equation}
			\label{projection_formula_morphism}
				Rf_!F\otimes G\to Rf_!(F\otimes f^{-1}G). 
			\end{equation}
		\end{proposition}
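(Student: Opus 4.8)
The plan is to mirror the proof of the non-derived Proposition \ref{projection_formula}: first produce a canonical morphism (\ref{projection_formula_morphism}), then show it is an isomorphism by computing stalks over each point $y\in Y$.

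First I would construct the morphism. Since $\Bk$ is a field, the bifunctor $(-)\otimes(-)$ on $\Sh(X)$ is exact, and so are $(-)_Z$ and $f^{-1}$, so all three pass termwise to the bounded derived categories without resolving. Choosing an $f_!$-acyclic resolution of $F$ (for instance a c-soft one), applying the natural morphism $f_!(\,-\,)\otimes(\,-\,)\to f_!\bigl((\,-\,)\otimes f^{-1}(\,-\,)\bigr)$ of Proposition \ref{projection_formula} termwise, and composing with the canonical morphism $f_!\to Rf_!$ would produce (\ref{projection_formula_morphism}) in $D^b(Y)$, with naturality in $F$ and $G$ inherited from Proposition \ref{projection_formula}.

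Next I would check that (\ref{projection_formula_morphism}) is an isomorphism by passing to stalks. Fix $y\in Y$, write $X_y:=f^{-1}(y)$, and let $i_y:X_y\hra X$ and $k_y:\{y\}\hra Y$ be the inclusions, noting that $f\circ i_y$ factors through $\{y\}$. Applying Proposition \ref{cartesian_commutative_derived.ver} to the Cartesian square with corners $X_y,\{y\},X,Y$ gives, for any $H\in D^b(X)$, a natural isomorphism $(Rf_!H)_y\simeq R\GG_c(X_y; i_y^{-1}H)$, which is the derived form of Proposition \ref{proper stalk}. Since $(-)_y$ is exact, the stalk of the left-hand side of (\ref{projection_formula_morphism}) is $(Rf_!F)_y\otimes G_y\simeq R\GG_c(X_y; i_y^{-1}F)\otimes G_y$. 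For the right-hand side, functoriality of inverse images gives $i_y^{-1}f^{-1}G\simeq(G_y)_{X_y}$, the constant complex on $X_y$ with value $G_y=k_y^{-1}G$, and combining this with Proposition \ref{tensor_pullback} yields $i_y^{-1}(F\otimes f^{-1}G)\simeq i_y^{-1}F\otimes(G_y)_{X_y}$, so the stalk of the right-hand side is $R\GG_c(X_y; i_y^{-1}F\otimes(G_y)_{X_y})$. The derived form of Proposition \ref{tensor_vector space} then identifies this with $R\GG_c(X_y; i_y^{-1}F)\otimes G_y$. After checking that these identifications are compatible with the stalk of (\ref{projection_formula_morphism}), the cone of (\ref{projection_formula_morphism}) has vanishing stalks and hence vanishes by Proposition \ref{stalk_exact}, so (\ref{projection_formula_morphism}) is an isomorphism in $D^b(Y)$.

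The main obstacle will not be the formal manipulation but securing the two ingredients used above in their derived forms: the proper-stalk formula $(Rf_!H)_y\simeq R\GG_c(X_y; i_y^{-1}H)$ and the K\"unneth-type identity $R\GG_c(X_y;-\otimes L_{X_y})\simeq R\GG_c(X_y;-)\otimes L$ for a bounded complex $L$ of $\Bk$-vector spaces. The first is obtained from Proposition \ref{cartesian_commutative_derived.ver}. The second is the delicate point, since $G_y$ need not be finite-dimensional; it holds because over a field every such $L$ is, in each degree, a possibly infinite direct sum of copies of $\Bk$, and $R\GG_c$ on the locally compact space $X_y$ commutes with such direct sums, which gives the identity. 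With these in hand, the stalk computation above completes the proof.
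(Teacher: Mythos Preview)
Your argument is correct and, in spirit, matches what the paper does: the paper gives no detailed proof, simply stating that the result ``is obtained by Proposition \ref{projection_formula}'', i.e.\ by deriving the non-derived projection formula. Your approach of constructing the morphism and then verifying it on stalks is exactly the derived analogue of the paper's proof of Proposition \ref{projection_formula}, so the strategies agree.

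That said, your stalk verification is more work than needed. Once you have chosen a c-soft resolution $I^\bullet$ of $F$, the non-derived Proposition \ref{projection_formula} already gives a termwise \emph{isomorphism} of complexes $f_!I^\bullet\otimes G\simeq f_!(I^\bullet\otimes f^{-1}G)$, not merely a morphism. The point you do not exploit is that the tensor product of a c-soft sheaf with any sheaf (over a field) is again c-soft, so $I^\bullet\otimes f^{-1}G$ is itself an $f_!$-acyclic resolution of $F\otimes f^{-1}G$. Hence both sides of the termwise isomorphism already compute the correct derived functors, and the derived isomorphism follows immediately without recourse to the base-change formula, the derived proper-stalk identity, or the K\"unneth argument for $R\GG_c$. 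Your route works, but this observation is presumably what the paper's one-line reference to Proposition \ref{projection_formula} has in mind, and it spares you the delicate commutation of $R\GG_c$ with infinite direct sums.
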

		
		This proposition is obtained by Proposition \ref{projection_formula}. 
		
		\begin{proposition}
		\label{distinguish triangle}
			Let $Z$ be a locally closed subset of $X$ and $Z^\prime$ be a closed subset of $Z$. For $F\in D^b(X)$, there is the following distinguish triangle
			\[
				F_{Z\bs Z^\prime}\to F_Z\to F_{Z^\prime}\overset{+1}{\to}
			\]
			in $D^b(X)$. 
		\end{proposition}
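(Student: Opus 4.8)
The plan is to deduce this directly from the abelian-category statement of Proposition \ref{short exact}, using that the cut-off functors are exact and therefore require no derived correction. First I would observe that all three subsets involved are locally closed in $X$: since $Z^\prime$ is closed in the locally closed set $Z$, both $Z^\prime$ and $Z\bs Z^\prime$ (which is open in $Z$) are locally closed in $X$, so $F_{Z\bs Z^\prime}$, $F_Z$ and $F_{Z^\prime}$ are defined for $F\in D^b(X)$. Recall also that each of $(-)_{Z\bs Z^\prime},\,(-)_Z,\,(-)_{Z^\prime}\colon\Sh(X)\to\Sh(X)$ is an exact functor (as recorded after Definition \ref{cut-off_def} and in Proposition \ref{i!}), hence extends termwise to an exact functor on the category $C^b(\Sh(X))$ of bounded complexes of sheaves and induces the corresponding functor on $D^b(X)$ without passing to injective resolutions.

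Next I would fix a bounded complex $F^\bullet$ representing the given object of $D^b(X)$. Applying Proposition \ref{short exact} to the sheaf $F^n$ in each degree $n$ yields short exact sequences
\[
	0\to (F^n)_{Z\bs Z^\prime}\to (F^n)_Z\to (F^n)_{Z^\prime}\to 0,
\]
which, by naturality in $F$ of the maps $F\to F_A$ and $F_W\hookrightarrow F$ underlying Definition \ref{cut-off_def}, are compatible with the differentials of $F^\bullet$. They therefore assemble into a short exact sequence of complexes
\[
	0\to F^\bullet_{Z\bs Z^\prime}\to F^\bullet_Z\to F^\bullet_{Z^\prime}\to 0
\]
in $C^b(\Sh(X))$.

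Finally I would invoke the standard fact that a short exact sequence of complexes in an abelian category gives rise to a distinguished triangle $F^\bullet_{Z\bs Z^\prime}\to F^\bullet_Z\to F^\bullet_{Z^\prime}\overset{+1}{\to}$ in the derived category, and note that the three terms appearing there coincide with the derived cut-off functors applied to $F$ by the exactness observation of the first paragraph. This yields the asserted distinguished triangle in $D^b(X)$.

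There is essentially no real obstacle here beyond bookkeeping; the only point that deserves a moment of care is verifying that the degreewise sequences coming from Proposition \ref{short exact} genuinely form morphisms of complexes, i.e.\ that the canonical maps in Definition \ref{cut-off_def} are natural in $F$ — which holds because they are built out of $f_*$, $f^{-1}$ and kernels, all of which are functorial.
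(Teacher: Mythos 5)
Your argument is correct and is precisely the expansion the paper intends when it says the triangle ``is obtained by Proposition~\ref{short exact}'': exactness of the cut-off functors lets you apply the short exact sequence degreewise to a representing complex, assemble a short exact sequence of complexes by naturality, and convert it into the distinguished triangle. No genuine difference in approach.
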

		
		This distinguish triangle is showed in \cite[(2.6.33.)]{KS90} and obtained by Proposition \ref{short exact}. 
		
		For considering the adjoint functor of the functor induced by the composition bifunctors discussed in Section.\ref{composition}, we need to define the right adjoint functor of $Rf_!$. 
		
		\begin{theorem}\cite[Theorem 3.1.5.]{KS90}
		\label{exceptional pullback}
			Let $f:X\to Y$ be a continuous map of finite dimensional manifolds. Then there exists a functor $f^!:D^b(Y)\to D^b(X)$ of triangulated categories and an isomorphism
			\[
				\Hom_{D^b(Y)}(Rf_!F, G)\simeq \Hom_{D^b(X)}(F, f^!G), 
			\]
			where $F\in D^b(X)$ and $G\in D^b(Y)$. 
		\end{theorem}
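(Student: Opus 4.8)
The plan is to build $f^!$ from two elementary cases by the standard factorization of $f$, using the uniqueness of adjoint functors to patch the pieces together. First I would factor $f$ through its graph: writing $h\colon X\hookrightarrow X\times Y$, $x\mapsto(x,f(x))$, for the graph embedding (a closed embedding, since $Y$ is Hausdorff) and $p\colon X\times Y\to Y$ for the second projection, one has $f=p\circ h$, so $Rh_!=h_*$ is exact and $Rf_!\simeq Rp_!\circ Rh_!$. Thus, once right adjoints $h^!$ of $Rh_!$ and $p^!$ of $Rp_!$ are produced, I would set $f^!:=h^!\circ p^!$. By uniqueness of adjoints this is independent of the chosen factorization, and it is automatically a functor of triangulated categories, since a right adjoint of a triangulated functor is triangulated. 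That $f^!$ actually takes values in $D^b$, and not merely in $D^+$, follows from the finite cohomological dimension of $Rf_!$ recorded before the theorem, so this is a book-keeping point rather than a genuine difficulty.

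For a closed embedding $i\colon Z\hookrightarrow X$ I would define $i^!:=i^{-1}\circ R\GG_Z(-)$. The verification is short: $i_*$ is exact and fully faithful (because $i^{-1}i_*\simeq\id$), and every cohomology sheaf of $R\GG_Z G$ is supported on $Z$, so $R\GG_Z G\simeq i_*i^{-1}R\GG_Z G$. Since $Ri_!=i_*$ and $i_*H$ is supported on $Z$, the derived form of the adjunction $(-)_Z\dashv\GG_Z$ recalled earlier yields, for $H\in D^b(Z)$ and $G\in D^b(X)$,
\[
\Hom_{D^b(X)}(Ri_!H,G)\simeq\Hom_{D^b(X)}(i_*H,R\GG_Z G)\simeq\Hom_{D^b(Z)}(H,i^{-1}R\GG_Z G),
\]
the last isomorphism by full faithfulness of $i_*$; this is the required adjunction, with the case of an open embedding ($i^!=i^{-1}$) subsumed as the trivial subcase.

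The projection $p\colon X\times Y\to Y$ is the main step. Here I would argue locally on $X$: cover $X$ by coordinate charts identified with open subsets of $\Rbb^n$, construct on each $U_\alpha\times Y\to Y$ the right adjoint $p_\alpha^!$ of $R(p|_{U_\alpha\times Y})_!$, check with uniqueness of adjoints that these agree over overlaps, and glue. On a single chart the structure map factors through an open embedding $(\text{open in }\Rbb^n)\times Y\hookrightarrow\Rbb^n\times Y$ (already handled) followed by the chain $\Rbb^n\times Y\to\Rbb^{n-1}\times Y\to\cdots\to Y$, each link of which has the form $q\colon\Rbb\times Z\to Z$. For this base case I would take $q^!G:=q^{-1}G[1]$ (the relative dualizing complex is $\Bk[1]$, since $\Rbb$ is orientable of dimension $1$) and verify $\Hom_{D^b(Z)}(Rq_!F,G)\simeq\Hom_{D^b(\Rbb\times Z)}(F,q^{-1}G[1])$ by passing to a $c$-soft resolution of $F$, applying the projection formula (Proposition \ref{projection_formula_derived.ver}), and using $R\GG_c(\Rbb; \Bk_\Rbb)\simeq\Bk[-1]$, equivalently $Rq_!\,\Bk_{\Rbb\times Z}\simeq\Bk_Z[-1]$.

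The hard part is precisely this projection case. Two points require care: (i) turning the chart-by-chart recipe into a genuine functor $D^b(Y)\to D^b(X\times Y)$, which in practice means fixing a functorial (for instance $c$-soft or flabby) resolution globally rather than gluing objects naively in the derived category; and (ii) making the base-case adjunction isomorphism natural in both variables, not merely a bijection on objects. One could instead sidestep the explicit construction: $Rf_!$ extends to the unbounded derived categories, commutes with small coproducts, and $D(X)$ is compactly generated, so Brown representability produces a right adjoint that then preserves boundedness by the cohomological-dimension bound; but this machinery lies outside the framework built in this paper, so I would prefer the constructive route above. Once $q^!$, hence $p^!$, is in hand, assembling $f^!=h^!\circ p^!$ and invoking the generalities of the first paragraph completes the argument.
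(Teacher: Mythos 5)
This theorem is cited from Kashiwara--Schapira and is not proved in the paper, so the comparison is against the KS90 argument rather than against anything in this document. Kashiwara--Schapira do not factor $f$ through its graph: they fix a bounded flat $c$-soft resolution $K^\bullet$ of $\Bk_X$ (which exists because $X$ has finite dimension), show (their Proposition~3.1.2) that for a single flat $c$-soft sheaf $K$ the functor $F\mapsto f_!(K\otimes F)$ on abelian categories of sheaves admits an explicit right adjoint $f^!_K$, and then assemble $f^!G$ as the total complex obtained by applying the $f^!_{K^i}$ to an injective resolution of $G$. The graph factorization plays no role in their existence proof; it is used afterwards for establishing formulas for $f^!$ in special cases. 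Your closed-embedding step is correct as written, and the base computation $q^!G\simeq q^{-1}G[1]$ for $q\colon\Rbb\times Z\to Z$ is also correct.

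The genuine gap is in your treatment of the projection $p\colon X\times Y\to Y$, and it is the very point you flag and then do not resolve. The step ``construct $p_\alpha^!$ on each chart $U_\alpha\times Y$, check agreement on overlaps, and glue'' is not available in $D^b$: objects of a derived category do not satisfy descent along open covers. Uniqueness of adjoints gives you isomorphisms on double overlaps, but it does not give you the cocycle on triple overlaps, and even a strict cocycle of isomorphisms does not allow one to glue objects in a bare triangulated category without a dg or $\infty$-categorical enhancement, which the paper has not set up. Your proposed repair --- fix a global functorial $c$-soft resolution and work at the level of complexes --- is exactly the KS90 device, but once you have it, the same recipe constructs $f^!$ directly for arbitrary $f$, so the factorization $f=p\circ h$ and the chain $\Rbb^n\times Y\to\cdots\to Y$ become superfluous. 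As it stands, the hardest part of the argument is deferred to a plan rather than proved, so the proposal is incomplete precisely where the theorem has content. (Your Brown-representability alternative would close the gap, but as you note it uses machinery outside the paper's framework.)
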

		
	\subsection{Composition}
	\label{composition}\phantom{a}
		Let $X_i\ (i=1, 2, 3, 4)$ be finite dimensional manifolds. We set $X_{ij}=X_i\times X_j$ and $X_{123}=X_1\times X_2\times X_3$. We denote by $p_{ij}:X_{123}\to X_{ij}$ the projection of the corresponding indexes. 
		\begin{definition}
		\label{composition_def}
			For $K_{ij}\in D^b(X_{ij})$, the composition bifunctor $D^b(X_{12})\times D^b(X_{23})\to D^b(X_{13})$ is defined as follows: 
			\begin{equation}
				K_{12}\circ_{X_2}K_{23}:=Rp_{13!}(p_{12}^{-1}K_{12}\otimes p_{23}^{-1}K_{23}). 
			\end{equation}
			We usually write $\circ_{2}$ instead of $\circ_{X_2}$. If there is no risk of confusion, we simply write $\circ$. 
		\end{definition}
		\begin{example}
		\label{composition functor}
			If $X_3=\pt$, one modifies $p_{i3}:X_{123}\to X_{i3}$ to $p_i:X_{12}\to X_i$ and $p_{12}:X_{123}\to X_{12}$ to $\id_{X_{12}}$. One sets the functor $\GF_K:D^b(X_2)\to D^b(X_1)$ for $K\in D^b(X_{12})$ as follows:
			\begin{equation}
				\GF_K(F):=Rp_{1!}(K\otimes p_2^{-1}F), 
			\end{equation}
			where $F\in D^b(X_2)$. 
			$\GF_K$ has a right adjoint functor $\GY_K: D^b(X_1)\to D^b(X_2)$ as follows:
			\begin{equation}
				\GY_K(F):= Rp_{2*}R\mathcal{H}om(K, p_1^!F), 
			\end{equation}
			where $F\in D^b(X_1)$. 
		\end{example}
		\begin{proposition}
		\label{associativity of composition}
			For $K_{ij}\in D^b(X_{ij})$, there is a canonical isomorphism
			\begin{equation}
				(K_{12}\circ_2 K_{23})\circ_3 K_{34}\simeq K_{12}\circ_2 (K_{23}\circ_3 K_{34}). 
			\end{equation}
		\end{proposition}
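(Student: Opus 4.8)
The plan is to lift everything to the fourfold product $X_{1234}:=X_1\times X_2\times X_3\times X_4$ and to show that both sides of the asserted isomorphism are canonically isomorphic to the single ``triple convolution kernel''
\[
 \Kcal:=Rq_{14!}\bigl(q_{12}^{-1}K_{12}\otimes q_{23}^{-1}K_{23}\otimes q_{34}^{-1}K_{34}\bigr)\in D^b(X_{14}),
\]
where $q_S\colon X_{1234}\to\prod_{i\in S}X_i$ denotes the projection for $S\subset\{1,2,3,4\}$. The only inputs are base change along Cartesian squares (Proposition \ref{cartesian_commutative_derived.ver}), the projection formula (Proposition \ref{projection_formula_derived.ver}), the compatibility of $f^{-1}$ with $\otimes$ (the derived form of Proposition \ref{tensor_pullback}), the commutativity and associativity of $\otimes$, and the standard functorialities $(g\circ f)^{-1}\simeq f^{-1}g^{-1}$ and $R(g\circ f)_!\simeq Rg_!\,Rf_!$.

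For the left-hand side, first compute $L:=K_{12}\circ_2K_{23}\in D^b(X_{13})$ on $X_{123}$, and then $(K_{12}\circ_2K_{23})\circ_3K_{34}=Rr_{14!}(r_{13}^{-1}L\otimes r_{34}^{-1}K_{34})$ on $X_{134}$, where $r_{13},r_{34},r_{14}$ are the projections of $X_{134}$. The square
\[
 \xymatrix{
 X_{1234}\ar[r]^{\rho}\ar[d]_{\pi}&X_{134}\ar[d]^{r_{13}}\\
 X_{123}\ar[r]_{p_{13}}\ar@{}[ur]|{\square}&X_{13}
 }
\]
with $\rho$ forgetting $X_2$ and $\pi$ forgetting $X_4$ is Cartesian, so Proposition \ref{cartesian_commutative_derived.ver} gives $r_{13}^{-1}L\simeq R\rho_!\,\pi^{-1}(p_{12}^{-1}K_{12}\otimes p_{23}^{-1}K_{23})\simeq R\rho_!(q_{12}^{-1}K_{12}\otimes q_{23}^{-1}K_{23})$, using the compatibility of $\pi^{-1}$ with $\otimes$ together with $p_{12}\circ\pi=q_{12}$ and $p_{23}\circ\pi=q_{23}$. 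Substituting this, then applying the projection formula to $\rho$ (with $\rho^{-1}r_{34}^{-1}K_{34}=(r_{34}\circ\rho)^{-1}K_{34}=q_{34}^{-1}K_{34}$), and finally collapsing $Rr_{14!}R\rho_!=R(r_{14}\circ\rho)_!=Rq_{14!}$, turns the left-hand side into $\Kcal$.

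Running the mirror computation for the right-hand side $K_{12}\circ_2(K_{23}\circ_3K_{34})$---now performing the inner composition on $X_{234}$ and the outer one on $X_{124}$, and using the Cartesian square whose legs are $X_{1234}\to X_{124}$ (forgetting $X_3$) and $X_{1234}\to X_{234}$ (forgetting $X_1$) over $X_{24}$---yields $\Kcal$ as well, the only extra ingredient being one use of the commutativity of $\otimes$ to bring $q_{12}^{-1}K_{12}$ onto the correct side before applying the projection formula. Composing the two chains of isomorphisms gives the asserted canonical isomorphism. The main point that requires care is purely organizational: keeping straight which of the many forgetful squares is Cartesian and which composite of projections equals which $q_S$. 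I would also remark that none of the projection maps are proper, but every tool invoked (base change, projection formula) is valid for arbitrary continuous maps of finite-dimensional manifolds, so no properness hypothesis enters; and if one additionally wants the pentagon coherence for the associators of a fivefold composition, it follows from the same bookkeeping carried out on $X_{12345}$, which I would only indicate.
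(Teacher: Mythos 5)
Your proof is correct and follows essentially the same strategy the paper uses for the immediately following Proposition~\ref{associativity of tilde composition} (which is the $M$-twisted version of this statement, proved in full there, while the paper leaves Proposition~\ref{associativity of composition} itself unproved). In both cases the idea is to lift to the fourfold product $X_{1234}$, to use the base-change isomorphism of Proposition~\ref{cartesian_commutative_derived.ver} for the Cartesian square of forgetful projections, the compatibility of inverse image with $\otimes$ (Proposition~\ref{tensor_pullback}), the projection formula (Proposition~\ref{projection_formula_derived.ver}), and the functorialities $(g\circ f)^{-1}\simeq f^{-1}g^{-1}$, $R(g\circ f)_!\simeq Rg_!Rf_!$, and then to show that both sides reduce to the same expression $Rq_{14!}(q_{12}^{-1}K_{12}\otimes q_{23}^{-1}K_{23}\otimes q_{34}^{-1}K_{34})$. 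Your bookkeeping of the Cartesian square and of which composites of projections equal which $q_S$ is accurate, and your remark that no properness is needed because base change and the projection formula for $Rf_!$ hold for arbitrary continuous maps of finite-dimensional manifolds is also correct. The proof is sound.
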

		
		Let $M$ be a smooth manifold. We denote by $\widetilde p_{ij}:M\times X_{123}\to M\times X_{ij}$ the projection. We define the tilde composition bifunctor which is similar to the composition bifunctor. The tilde composition bifunctor is necessary for the proof of Theorem \ref{re:main theorem}. 
		\begin{definition}
		\label{tilde composition}
			For $K_{ij}\in D^b(M\times X_{ij})$, the tilde composition bifunctor $D^b(M\times X_{12})\times D^b(M\times X_{23})\to D^b(M\times X_{13})$ is defined as follow:
			\begin{equation}
				K_{12}\tcp_{X_2}K_{23}:=R\widetilde p_{13!}(\widetilde p_{12}^{\ -1}K_{12}\otimes \widetilde p_{23}^{\ -1}K_{23}). 
			\end{equation}
			We usually write $\tcp_{\!2}$ instead of $\tcp_{\!X_2}$. If there is no risk of confusion, we simply write $\tcp$. 
		\end{definition}
		\begin{proposition}
		\label{associativity of tilde composition}
			For $K_{12}\in D^b(M\times X_{12}), K_{23}\in D^b(M\times X_{23}), K_{34}\in D^b(X_{34})$, there is a canonical isomorphism: 
			\begin{equation}
				(K_{12}\tcp_{\!2} K_{23})\circ_3 K_{34}\simeq K_{12}\tcp_{\!2} (K_{23}\circ_3 K_{34}). 
			\end{equation}
		\end{proposition}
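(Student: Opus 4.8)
The plan is to expand both composites on the big product $W := M\times X_{1234}$ and to recognise that each of them reduces, there, to the same threefold pull--tensor--push expression. I would introduce the projections $a\colon W\to M\times X_{12}$, $b\colon W\to M\times X_{23}$, $c\colon W\to X_{34}$ (the last one forgets both the $M$-factor and the $X_1$-factor) and $d\colon W\to M\times X_{14}$, and prove that both sides are canonically isomorphic to $Rd_!\bigl(a^{-1}K_{12}\otimes b^{-1}K_{23}\otimes c^{-1}K_{34}\bigr)$; the proposition then follows by transitivity of canonical isomorphisms.

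For the left-hand side I would first write $L := K_{12}\tcp_{\!2}K_{23}=R\widetilde p_{13!}(\widetilde p_{12}^{-1}K_{12}\otimes\widetilde p_{23}^{-1}K_{23})$ (Definition \ref{tilde composition}), regard $M\times X_{13}$ as $(M\times X_1)\times X_3$, and unfold $L\circ_3 K_{34}=Rr_{14!}(r_{13}^{-1}L\otimes r_{34}^{-1}K_{34})$, where $r_{13},r_{34},r_{14}$ are the projections off $M\times X_{134}$. The square with vertices $W$, $M\times X_{123}$, $M\times X_{134}$, $M\times X_{13}$ --- in which $W\to M\times X_{123}$ forgets $X_4$ and $W\to M\times X_{134}$ forgets $X_2$ --- is Cartesian, so base change (Proposition \ref{cartesian_commutative_derived.ver}) together with the compatibility of $(-)^{-1}$ with $\otimes$ (Proposition \ref{tensor_pullback}, extended to $D^b$) gives $r_{13}^{-1}L\simeq R\widetilde r_!(a^{-1}K_{12}\otimes b^{-1}K_{23})$, using that the relevant composites of projections are exactly $a$ and $b$. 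Substituting and applying the projection formula (Proposition \ref{projection_formula_derived.ver}) to $\widetilde r\colon W\to M\times X_{134}$ --- after a commutativity isomorphism so that the fixed factor $r_{34}^{-1}K_{34}$ sits on the right --- absorbs $K_{34}$ into the proper pushforward, and since $r_{34}\circ\widetilde r=c$ and $r_{14}\circ\widetilde r=d$ one lands on the displayed expression.

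For the right-hand side I would proceed symmetrically: set $N := K_{23}\circ_3 K_{34}=Rs_{24!}(s_{23}^{-1}K_{23}\otimes s_{34}^{-1}K_{34})$ via the projections off $M\times X_{234}$ (with $M\times X_{23}$ read as $(M\times X_2)\times X_3$), expand $K_{12}\tcp_{\!2}N=R\widetilde q_{14!}(\widetilde q_{12}^{-1}K_{12}\otimes\widetilde q_{24}^{-1}N)$, and use the Cartesian square with vertices $W$, $M\times X_{234}$, $M\times X_{124}$, $M\times X_{24}$, where $W\to M\times X_{234}$ forgets $X_1$ and $W\to M\times X_{124}$ forgets $X_3$. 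Base change gives $\widetilde q_{24}^{-1}N\simeq R\widetilde s_!(b^{-1}K_{23}\otimes c^{-1}K_{34})$, and the projection formula for $\widetilde s$ followed by the identifications $\widetilde q_{12}\circ\widetilde s=a$ and $\widetilde q_{14}\circ\widetilde s=d$ produces the same expression $Rd_!\bigl(a^{-1}K_{12}\otimes b^{-1}K_{23}\otimes c^{-1}K_{34}\bigr)$.

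The argument is entirely formal, so I expect the only real work to be bookkeeping rather than mathematics: one must check that the two auxiliary squares are genuinely Cartesian (each fibre product collapses to $W$ precisely because the two "forgotten" factors are complementary), verify the several projection-composite identities such as $\widetilde p_{12}\circ q=a$, and keep the tensor factors in the order demanded by Proposition \ref{projection_formula_derived.ver}, which is stated as $Rf_!F\otimes G\to Rf_!(F\otimes f^{-1}G)$ and so occasionally needs to be preceded by a commutativity isomorphism. This is the same "spread out over the big product, then contract" pattern as the proof of Proposition \ref{associativity of composition}, with the $M$-factor simply carried along unchanged throughout.
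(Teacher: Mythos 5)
Your proof is correct and follows the paper's argument essentially verbatim: both spread each composite over the big product $M\times X_{1234}$ and identify both sides with the same threefold expression $Rd_!\bigl(a^{-1}K_{12}\otimes b^{-1}K_{23}\otimes c^{-1}K_{34}\bigr)$ by base change over the Cartesian square with vertices $M\times X_{1234}$, $M\times X_{123}$, $M\times X_{134}$, $M\times X_{13}$, compatibility of $(-)^{-1}$ with $\otimes$, and the projection formula. The only differences are cosmetic (the paper spells out only the left-hand reduction and says ``similarly'' for the right; you additionally note the harmless need for a tensor-commutativity isomorphism to match the stated form of the projection formula).
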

		\begin{proof}
			We define the following projections
			\[
				\xymatrix{
					&&M\times X_{1234}\ar[dl]_{\tq_{123}}\ar[dr]^{\tp_{134}}&&\\
					&M\times X_{123}\ar[dl]_{\tp_{12}}\ar[d]_{\tp_{23}}\ar[dr]_{\tp_{13}}&\ar@{}@<0ex>|{\square}&M\times X_{134}\ar[dl]^{\tq_{13}}\ar[d]^{\tq_{14}}\ar[dr]^{\tq_{34}}&\\
					M\times X_{12}&M\times X_{23}&M\times X_{13}&M\times X_{14}&X_{34}, 
				}
			\]
			where the above square is  the Cartesian square. By Proposition \ref{cartesian_commutative_derived.ver}, Proposition\ref{tensor_pullback} and Proposition \ref{projection_formula_derived.ver}, 
			\begin{align*}
				(K_{12}\tcp_{\!2} K_{23})\circ_3 K_{34}&=R\tq_{14!}(\tq_{13}^{\ -1}(K_{12}\tcp_{\!2} K_{23})\otimes\tq_{34}^{\ -1}K_{34})\\
				&=R\tq_{14!}(\tq_{13}^{\ -1}R\tp_{13!}(\tp_{12}^{\ -1}K_{12}\otimes \tp_{23}^{\ -1}K_{23})\otimes\tq_{34}^{\ -1}K_{34})\\
				&\simeq R\tq_{14!}(R\tp_{134!}\tq_{123}^{\ -1}(\tp_{12}^{\ -1}K_{12}\otimes \tp_{23}^{\ -1}K_{23})\otimes\tq_{34}^{\ -1}K_{34})\\
				&\simeq R\tq_{14!}(R\tp_{134!}(\tq_{123}^{\ -1}\tp_{12}^{\ -1}K_{12}\otimes \tq_{123}^{\ -1}\tp_{23}^{\ -1}K_{23})\otimes\tq_{34}^{\ -1}K_{34})\\
				&\simeq R\tq_{14!}R\tp_{134!}(\tq_{123}^{\ -1}\tp_{12}^{\ -1}K_{12}\otimes \tq_{123}^{\ -1}\tp_{23}^{\ -1}K_{23}\otimes \tp_{134}^{\ -1}\tq_{34}^{\ -1}K_{34}). 
			\end{align*}
			We denote the projections by $\tp^{\ \prime}_{12}:M\times X_{1234}\to M\times X_{12}, \tp^{\ \prime}_{23}:M\times X_{1234}\to M\times X_{23}$ and $\tp^{\ \prime}_{34}:M\times X_{1234}\to X_{34}$. Then we have the following isomorphism
			\[
				(K_{12}\tcp_{\!2} K_{23})\circ_3 K_{34}\simeq R\tp^{\ \prime}_{14!}(\tp^{\ \prime-1}_{12}K_{12}\otimes\tp^{\ \prime-1}_{23}K_{23}\otimes\tp^{\ \prime-1}_{34}K_{34}). 
			\]
			We can similarly prove an isomorphism
			\[
				K_{12}\tcp_{\!2} (K_{23}\circ_3 K_{34})\simeq R\tp^{\ \prime}_{14!}(\tp^{\ \prime-1}_{12}K_{12}\otimes\tp^{\ \prime-1}_{23}K_{23}\otimes\tp^{\ \prime-1}_{34}K_{34}). 
			\]
		\end{proof}

\section{Quantized contact transform}
\label{QCT}\phantom{a}
	In this chapter, we will show the quantized contact transform and the Radon transform. In order to define the quantize contact transform, we introduce some concepts in the sheaf theory: cohomologically constructible sheaves, microsupports and $\mu hom$. See Kashiwara-Schapira \cite{KS90} for details and proofs.  

	\subsection{Cohomologically constructible sheaves}
	\label{cohomologically constructible sheaves}\phantom{a}
		Let $\Ccal$ be a category.  We denote by $\Set$ the category of set and by $\Ccal^\vee$ the category of contravariant functor from $\Ccal$ to $\Set$. We regard $\Ccal$ as a full subcategory of $\Ccal^\vee$ by the functor $X\mapsto \Hom_{\Ccal}(-, X)$. 
	
		In this section, we show ind-object and pro-object in $\Ccal^\vee$ at first. 
	
		\begin{definition}
		\label{filtrant}
			A category $\Ical$ is called {\bf filtrant} if it is non-empty and satisfies the following conditions. 
			\begin{enumerate}[(i)]
				\item For $i, j\in\Ical$, there exist $k\in\Ical$ and morphisms $i\to k$ and $j\to k$. 
				\item For two morphisms $f, g\in \Hom_{\Ical}(i, j)$, there exists a morphism $h:j\to k$ such that $h\circ f=h\circ g$.
			\end{enumerate} 
		\end{definition}
		
		\begin{definition}
		\label{ind-pro-object}
			Let $\Ical$ and $\Ccal$ be two categories, $\Ical$ being filtrant. 
			\begin{enumerate}[(i)]
				\item For an inductive system $F$ in $\Ccal$ indexed by $\Ical$, let $\indlim{\Ical} F$ denote the functor from the opposite category $\Ccal^\circ$ to $\Set$:
				\[
					X\mapsto \varinjlim_{i\in\Ical} \Hom_{\Ccal}(X, F(i)). 
				\]
				\item For a projective system $F$ in $\Ccal$ indexed by $\Ical$, let $\prolim{\Ical} F$ denote the functor from $\Ccal$ to $\Set$:
				\[
					X\mapsto \varinjlim_{i\in\Ical} \Hom_{\Ccal}(F(i), X). 
				\]
			\end{enumerate}
			
			A functor from $\Ccal^\circ$ (resp.\,$\Ccal$) to $\Set$ is called an ind-object (resp.\,a pro-object) if it is isomorphic to $\indlim{\Ical} F$ (resp.\,$\prolim{\Ical} F$) for an inductive (resp.\,projective) system $F$ in $\Ccal$ indexed by $\Ical$. 
		\end{definition}
	
		\begin{remark}
			We may regard $\indlim{\Ical} F$ (resp.\,$\prolim{\Ical} F$) as an object in $\Ccal$ if it is representable. 
		\end{remark}
		
		\begin{example}\cite[Example 1.11.5.]{KS90}
			Let $(I, \leq)$ be an ordered set. We define a category $\Ical$ as follows:
			\begin{align*}
				\Ob(\Ical)&=I, \\
				\Hom_{\Ical}(i, j)&=
				\begin{cases}
					\pt&i\leq j, \\
					\emptyset&\ow. 
				\end{cases}
			\end{align*}
			If $I$ is a directed ordered set, the category $\Ical$ is filtrant. 
		\end{example}
	
		\begin{definition}
		\label{cohomologically constructible}
			An object $F$ of $D^b(X)$ is called {\bf cohomologically constructible}, if for any $x\in X$, the following conditions are satisfied. 
			\begin{enumerate}[(i)]
				\item Two objects $\indlim{x\in U}R\GG(U; F)$ and $\prolim{x\in U}R\GG_c(U; F)$ are representable ($U$ ranges through the family of open neighborhoods of $x$). 
				\item Two morphisms $\indlim{x\in U}R\GG(U; F)\to F_x$ and $R\GG_{\{x\}}(X; F)\to\prolim{x\in U}R\GG_c(U; F)$ are isomorphisms. 
				\item For any $i\in\Zbb$, $H^i(F_x)$ and $H^i(R\GG_{\{x\}}(X; F))$ are finite dimensional vector spaces. 
			\end{enumerate}
		\end{definition}

	\subsection{Microsupport}
	\label{microsupport}\phantom{a}
	%$SS(F)$
		We denote by $(x; \xi)$ a local homogeneous coordinate system on the cotangent bundle $T^*X$ of $X$. The zero section of cotangent bundle is denoted by $0_X$. We set $\dot T^*X:=T^*X\bs 0_X$. We show the definition of the {\bf microsupport} $\SS(F)\subset T^*X$\cite[Definition 5.1.2.]{KS90}. 
		
		\begin{definition}
			Let $F\in D^b(X)$ and let $p\in T^*X$. One says that $p\notin \SS (F)$ if there exists an open neighborhood $U$ of $p$ such that for any $x_0\in X$ and any real $C^1$-function $\Gvf$ on $X$ defined in a neighborhood of $x_0$ with $(x_0; d\Gvf(x_0))\in U$, one has $R\GG_{\{x; \Gvf(x)\geq \Gvf(x_0)\}}(F)_{x_0}\simeq 0$. 
		\end{definition}
		
		%localization
		
		Let $\GO$ be a subset of $T^*X$ and let $V=T^*X\bs \GO$. The full subcategory $D^b_{V}(X)$ of $D^b(X)$ consisting of sheaves $F$ such that $\SS(F)\subset V$ is triangulated. One sets 
		
		\begin{equation}
			D^b(X; \GO):=D^b(X)/D^b_{V}(X)
		\end{equation}
		as the localization of $D^b(X)$ of $D^b_{V}(X)$(see \cite[\S 6.1.]{KS90}). For $p\in T^*X$, if $\GO=\{p \}$, we write $D^b(X; p)$ instead of $D^b(X; \{p\})$. 
		
		\begin{example}%\textcolor{orange}{(Sect\ref{dlc})}
		\label{MVgamma}
			Let $M$ be a smooth manifold and $M\times \Vbb\to M$ be a trivial vector bundle. For a closed cone $\Gg\subset\Vbb$, we write its anti polar cone 
			\begin{equation}
				\Gg^{\circ a}:=\{\Gn\in\Vbb^*; \langle v, \Gn\rangle\leq 0 \text{\ for any\ } v\in\Gg\}
			\end{equation}
			as the cone of $\Vbb^*$. 
			We set $T^{*, \Gg}(M\times\Vbb):=T^*M\times \Vbb\times (\Vbb^*\bs\Gg^{\circ a})$. Especially, if $\Vbb=\Rbb$ and $\Gg=[0, \infty)$, we write $T^{*, +}(M\times \Rbb)$ instead of $T^{*, [0, \infty)}(M\times\Rbb)$. 
		\end{example}
		
		Let $M$ be a closed submanifold of $X$. The normal bundle $T_MX$ to $M$ in $X$ is defined as the cokernel of the morphism $TM\to M\times_{X}TX$. 
		Similarly, the conormal bundle $T^*_MX$ to $M$ in $X$ is defined as the kernel of the morphism $M\times_{X}T^*X\to T^*M$. 
		
		\begin{proposition}\cite[Proposition 6.6.1.(ii)]{KS90}
		\label{microsupport_prop}
			Let $p\in T^*_MX$ and let $F\in D^b(X)$. Assume $\SS(F)\subset T^*_MX$ in a neighborhood of $p$. Then there exists $L\in D^b(\Vect_\Bk)$ such that $F\simeq L_M$ in $D^b(X; p)$. 
		\end{proposition}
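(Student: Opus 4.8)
The statement concerns only the localized category $D^b(X;p)$, hence is purely local around the base point $\pi(p)\in M$; the plan is to bring $(M,p)$ to a normal form by a choice of coordinates, reduce to a one-dimensional model, and then run an argument with cut-off functors and the non-characteristic deformation lemma. First I would choose local coordinates $x=(x',x'')\in\Rbb^{d}\times\Rbb^{n-d}$ near $\pi(p)$ in which $M=\{x''=0\}$ and $p$ is the covector $dx_{n}$ at the origin (a linear change of coordinates achieves the latter). If $n=d$, then $M$ is open in $X$, $p\in T^{*}_{M}X$ forces $p\in 0_{X}$, and $\SS(F)\subset 0_{X}$ near $p$ gives, by conicity of $\SS(F)$ and the criterion for vanishing microsupport, that the cohomology sheaves of $F$ are locally constant near $\pi(p)$; one then takes $L:=F_{\pi(p)}$ and is done. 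So assume $m:=n-d\ge 1$; in the chosen coordinates the hypothesis says that on a conic neighbourhood of $p$ every $(x;\xi)\in\SS(F)$ satisfies $x''=0$ and $\xi'=0$.

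The vanishing of the $\xi'$-components near $p$ means that $F$ is, microlocally near $p$, locally constant along the contractible fibres $\Rbb^{d}\times\{x''\}$ of the projection $q\colon\Rbb^{n}\to\Rbb^{n-d}$, $(x',x'')\mapsto x''$; by the standard consequences of the non-characteristic deformation lemma of \cite{KS90} this yields $F\simeq q^{-1}G$ in $D^{b}(X;p)$ for some $G\in D^{b}(\Rbb^{m})$. Since $\SS(q^{-1}G)$ is the pullback of $\SS(G)$ under $q$ and $\SS(F)\subset T^{*}_{M}X$ near $p$, the microsupport of $G$ is contained in $T^{*}_{\{0\}}\Rbb^{m}=\{0\}\times\Rbb^{m}$ on a conic neighbourhood of $p':=(0;e_{m})$; concretely, $(x;\xi)\in\SS(G)$ with $x$ near $0$ and $\xi$ near $e_{m}$ forces $x=0$. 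This reduces the proposition to the model case $M=\{0\}\subset Y:=\Rbb^{m}$.

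In the model case I would put $L:=R\GG_{\{x_{m}\ge 0\}}(G)_{0}\in D^{b}(\Vect_{\Bk})$ and prove $G\simeq L_{\{0\}}$ in $D^{b}(Y;p')$ in two steps. Step 1 is $G\simeq R\GG_{\{x_{m}\ge 0\}}(G)$ in $D^{b}(Y;p')$: the cone of the canonical map $R\GG_{\{x_{m}\ge 0\}}(G)\to G$ is $Rj_{*}(G|_{\{x_{m}<0\}})$ with $j\colon\{x_{m}<0\}\hookrightarrow Y$, and computing $R\GG_{\{\Gvf\ge\Gvf(x_{0})\}}$ of this cone at base points $x_{0}$ near $0$ in codirections near $e_{m}$, the relevant cohomologies are expressed through cohomologies of $G$ on open subsets of $\{x_{m}<0\}$ only, where $\SS(G)$ carries no covector near $e_{m}$ (such a covector would lie over a point with $x\ne 0$); hence the cone has empty microsupport near $p'$. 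Step 2 is $R\GG_{\{x_{m}\ge 0\}}(G)\simeq L_{\{0\}}$ in $D^{b}(Y;p')$: writing $H:=R\GG_{\{x_{m}\ge 0\}}(G)$, the cut-off triangle $H_{\{x_{m}>0\}}\to H\to H_{\{x_{m}\le 0\}}\overset{+1}{\to}$ has, near $0$, $H_{\{x_{m}\le 0\}}\simeq L_{\{0\}}$ — its stalk at $0$ is $L$ and its stalks off the origin vanish, because $(x_{0};e_{m})\notin\SS(G)$ for $x_{0}\ne 0$ on $\{x_{m}=0\}$ — while $H_{\{x_{m}>0\}}\simeq G_{\{x_{m}>0\}}$ near $0$; a non-characteristic deformation in the variable $x_{m}$ over the level sets $\{x_{m}=t\}$, $0<t<\rho$ (legitimate precisely because those base points have $x\ne 0$, so $\SS(G)$ carries no obstructing covector there) shows $G_{\{x_{m}>0\}}\simeq 0$ in $D^{b}(Y;p')$, whence $H\simeq L_{\{0\}}$ there. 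Combining the two steps gives $G\simeq L_{\{0\}}$; applying $q^{-1}$ and undoing the coordinate change yields $F\simeq L_{M}$ in $D^{b}(X;p)$.

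The main obstacle is the bookkeeping in the model case: one must check that each error term produced by the cut-off triangles — $Rj_{*}(G|_{\{x_{m}<0\}})$ in Step 1 and $G_{\{x_{m}>0\}}$ in Step 2 — genuinely has no microsupport at $p'$. This is exactly the place where the hypothesis $\SS(F)\subset T^{*}_{M}X$ is used in full strength: it removes precisely the covectors that would obstruct the non-characteristic deformations, and it is the one point where the argument leans on the microsupport estimates of \cite{KS90} (propagation, the non-characteristic deformation lemma, and the behaviour of $\SS$ under $Rj_{*}$ and $(-)_{Z}$) rather than on formal manipulations; everything outside the model case is routine.
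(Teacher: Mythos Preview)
The paper does not give its own proof of this proposition; it is stated with a bare citation to \cite[Proposition~6.6.1(ii)]{KS90} and immediately followed by the next section, so there is nothing in the paper to compare your argument against.

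As for your sketch on its own merits: the overall architecture---local coordinates, reduction along the $x'$-directions to a model $M=\{0\}\subset\Rbb^{m}$, then a cut-off argument in the $x_{m}$-variable---is the right shape and is close in spirit to how \cite{KS90} handles such statements. Two places deserve a closer look. First, the reduction $F\simeq q^{-1}G$ in $D^{b}(X;p)$ from the vanishing of the $\xi'$-components is not a direct corollary of the non-characteristic deformation lemma alone; it needs the microlocal cut-off lemma (e.g.\ \cite[Proposition~6.1.4]{KS90}) or an equivalent device, and you should name that explicitly rather than ``standard consequences''. Second, in Step~2 you assert $H_{\{x_{m}\le 0\}}\simeq L_{\{0\}}$ near $0$; the stalk computation you give is correct, but to conclude an isomorphism of \emph{sheaves} (not just pointwise) in $D^{b}(Y;p')$ you still need to produce a morphism and check it is an isomorphism microlocally, which again comes down to a microsupport estimate for the cone. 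None of this is fatal, but as written the argument leans on ``microsupport estimates of \cite{KS90}'' at precisely the two junctures where the real content lies, so it is more of a roadmap than a proof.
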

	
	\subsection{Microlocalization}
	\label{muhom}\phantom{a}
		Let $X$ be an $n$-dimensional manifold. Let $M$ be a closed submanifold of codimension $l$. 
		We shall construct the {\bf normal deformation} $\tX_M$ of $M$ in $X$ and two maps $p:\tX_M\to X$ and $t:\tX_M\to \Rbb$ such that
		\[
			\begin{cases}
				p^{-1}(X\bs M)&\text{is isomorphic to } (X\bs M)\times(\Rbb\bs\{0\}),\\
				t^{-1}(\Rbb\bs\{0\})&\text{is isomorphic to }X\times(\Rbb\bs\{0\}),\\
				t^{-1}(0)&\text{is isomorphic to }T_MX. 
			\end{cases}
		\]
		
		\ul{construction of $\tX_M$}
		
		We take an open covering $X=\bigcup_{i}U_i$ and open embeddings $\Gvf_i:U_i\to\Rbb^n$ such that $U_i\cap M=\Gvf_i^{-1}(\{0\}^l\times\Rbb^{n-l})$. Using a coordinate $x=(x^\prime, x^{\prime\prime})\in \Rbb^l\times\Rbb^{n-l}$, we define $V_i$ as
		\[
			V_i:=\{(x, t)\in\Rbb^n\times\Rbb; (tx^\prime, x^{\prime\prime})\in\Gvf_i(U_i)\}. 
		\]
		We denote by $t_{V_i}:V_i\to\Rbb$ the projection $(x, t)\mapsto t$, and by $p_{V_i}:V_i\to U_i$ the morphism $(x, t)\mapsto \Gvf_i^{-1}(tx^\prime, x^{\prime\prime })$. We define the map $\Gy_{ji}:V_i\times_{U_i}(U_i\cap U_j)\to\Rbb^n$ by setting $\Gy_{ji}(x, t)=(\Gy_{ji}^\prime, \Gy_{ji}^{\prime\prime}(x, t))$ with $(t\Gy_{ji}^\prime(x, t), \Gy_{ji}^{\prime\prime}(x, t))=\Gvf_j\Gvf_i^{-1}(tx^\prime, x^{\prime\prime})$. We note that the first $l$ components of $\Gvf_j\Gvf_i^{-1}(tx^\prime, x^{\prime\prime})$ vanish when $t=0$. 
		
		We set
		\[
			\tX_M:=\8\bigsqcup_{i}V_i\9/ \sim, 
		\]
		where $\sim$ is an equivalence relation which identifies $(x_i, t_i)\in V_i$ and $(x_j, t_j)\in V_j$ if $t_i=t_j$ and $x_j=\Gy_{ji}(x_i, t_i)$. We can immediately check $\tX_M$ is a manifold and two maps $p:\tX_M\to X$ and $t:\tX_M\to\Rbb$ defined by $p|_{V_i}=p_{V_i}$ and $t|_{V_i}=t_{V_i}$ respectively. 
		
		We introduce some maps related to the normal deformation of $M$ in $X$. We denote by $\GO$ the open subset of $\tX_M$ obtained as the inverse image of $\Rbb^+=(0, \infty)$ by the map $t$. Then we denote by $j$ the embedding $\GO\hookrightarrow\tX_M$, by $\widetilde p$ the map $p\circ j$, by $s$ the immersion $T_MX\hookrightarrow \tX_M$, by $\tau$ the projection $T_MX\to M$ and by $i$ the inclusion $M\hookrightarrow X$. We show the diagram
		\[
			\xymatrix{
				T_MX\ar[r]^{s}\ar[d]_{\tau}&\tX_M\ar[d]^{p}&\GO\ar[l]_{j}\ar[dl]^{\tp}\\
				M\ar[r]_{i}&X&
			}
		\]
		about these maps. For the embedding $j$, we remark that $j!$ is exact. 
		\begin{proposition}\cite[Lemma 4.2.1.]{KS90}
			There is a natural isomorphism
			\[
				s^{-1}Rj_*\tp^{-1}\simeq s^!j_!\tp^!. 
			\]
		\end{proposition}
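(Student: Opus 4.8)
The plan is to prove the isomorphism for each fixed $F\in D^b(X)$ by inserting the extension by zero $j_!\tp^{-1}F$ into the standard excision triangle along the closed hypersurface $Z:=t^{-1}(0)=s(T_MX)$ of $\tX_M$, and then absorbing the resulting degree shift into $\tp^!$ using that $\tp$ is an oriented submersion of relative dimension $1$.

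First I would record the geometry around $Z$. The subset $Z$ is a closed submanifold of $\tX_M$ of codimension $1$, and $\tX_M\bs Z=\GO\sqcup t^{-1}((-\infty,0))$ is a partition into two open subsets, so $\GO$ is open and closed in $\tX_M\bs Z$. Write $l\colon\tX_M\bs Z\hookrightarrow\tX_M$ for the open inclusion and $m\colon\GO\hookrightarrow\tX_M\bs Z$ for the clopen inclusion, so that $j=l\circ m$. Since $m$ is clopen we have $m_!\simeq Rm_*$, and hence, writing $G:=\tp^{-1}F$,
\[
l^{-1}(j_!G)\simeq m_!G\simeq Rm_*G,\qquad Rl_*\bigl(l^{-1}(j_!G)\bigr)\simeq R(l\circ m)_*G\simeq Rj_*G.
\]

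Next I would feed $H:=j_!\tp^{-1}F$ into the standard distinguished triangle $Rs_*\,s^!H\to H\to Rl_*\,l^{-1}H\overset{+1}{\to}$ attached to the closed inclusion $s$ with open complement $l$ (see \cite[\S2.6]{KS90}); by the computation above it becomes
\[
Rs_*\,s^!\bigl(j_!\tp^{-1}F\bigr)\longrightarrow j_!\tp^{-1}F\longrightarrow Rj_*\tp^{-1}F\overset{+1}{\longrightarrow}.
\]
Applying the exact functor $s^{-1}$, the middle term vanishes because $j_!$ is extension by zero and $\GO\cap Z=\emptyset$, while $s^{-1}Rs_*\simeq\id$ as $s$ is a closed embedding; so the triangle yields a natural isomorphism
\[
s^{-1}Rj_*\tp^{-1}F\;\simeq\;s^!\,j_!\tp^{-1}F\,[1].
\]
It then remains to identify this shift with $\tp^!$. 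Unwinding the charts $V_i$ used to construct $\tX_M$ shows that $\tp=p|_\GO\colon\GO\to X$ is a submersion of relative dimension $1$ whose fibres are all copies of $(0,\infty)$, and that the rank-one subbundle $\ker d\tp\subset T\GO$ is globally trivialized by the direction of increasing $t$; therefore the relative orientation sheaf is constant and $\tp^!\simeq\tp^{-1}[1]$ (see \cite[\S3.3]{KS90}). Substituting $\tp^{-1}F[1]\simeq\tp^!F$ into the previous display and observing that every morphism used is functorial in $F$ gives the desired natural isomorphism $s^{-1}Rj_*\tp^{-1}\simeq s^!j_!\tp^!$.

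The one step that is more than bookkeeping — and hence the main obstacle — is the last one: verifying from the explicit construction of the normal deformation that $\tp$ is a submersion with coherently oriented $(0,\infty)$-fibres (equivalently, that $\tp^!\Bk_X\simeq\Bk_\GO[1]$). Everything before it is formal, the only point to watch being that $\GO$ is \emph{clopen} in $\tX_M\bs Z$, which is precisely what turns $Rl_*\,l^{-1}(j_!\tp^{-1}F)$ into $Rj_*\tp^{-1}F$ rather than something that also involves the other half $t^{-1}((-\infty,0))$.
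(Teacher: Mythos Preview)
Your argument is correct. The paper itself does not give a proof of this statement; it simply records the isomorphism and cites \cite[Lemma 4.2.1.]{KS90}. Your route --- applying the excision triangle $Rs_*s^!H\to H\to Rl_*l^{-1}H\overset{+1}{\to}$ to $H=j_!\tp^{-1}F$, using that $\GO$ is clopen in $\tX_M\setminus Z$ to identify $Rl_*l^{-1}H$ with $Rj_*\tp^{-1}F$, and then absorbing the resulting shift via $\tp^!\simeq\tp^{-1}[1]$ --- is precisely the standard proof, and matches the argument in \cite{KS90}. The identification $\GO\simeq X\times(0,\infty)$ with $\tp$ the first projection (visible from the description of $t^{-1}(\Rbb\setminus\{0\})$ in the construction of $\tX_M$) makes the step $\tp^!\simeq\tp^{-1}[1]$ immediate, so the point you flag as the main obstacle is in fact harmless here.
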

		%\begin{proof}
			%%%%%%%%%%%%%
		%\end{proof}
		
		\begin{definition}
		\label{specialization}
			Let $F\in D^b(X)$. One sets
			\begin{equation}
				\nu_M(F):=s^{-1}Rj_*\tp^{-1}F\simeq s^!j_!\tp^!F, 
			\end{equation}
			and say that $\nu_M(F)$ is the specialization of $F$ along $M$. 
		\end{definition}
		
		Let $\tau:E\to M$ be a real vector bundle and $\pi:E^*\to M$ be the dual vector bundle. We denote by $p_1$ and $p_2$ the first and second projections from $E\times_{M}E^*$: 
		\[
			\xymatrix{
				&E\times_{M}E^*\ar[dl]_{p_1}\ar[dr]^{p_2}&\\
				E\ar[dr]_{\tau}&&E^*\ar[dl]^{\pi}\\
				&M.&
			}
		\]
		We define the subsets of $E\times_{M}E^*$ as below:
		\[
			P^+:=\{(p, q)\in E\times_ME^*; \langle p, q\rangle\geq0\}. %, \quad P^-:=\{(p, q)\in E\times_ME^*; \langle p, q\rangle\leq0\}.
		\]
		%Then we denote by $\tGY_{P^+}, \tGF_{P^+}, \tGY_{P^-}$ and $\tGF_{P^-}$ the following functors: 
		%\begin{eqnarray*}
		%	\tGY_{P^+}&=&Rp_{2*}\circ R\GG_{P^+}\circ p_1^{-1}\\
		%	\tGF_{P^+}&=&Rp_{1!}\circ (-)_{P^+}\circ p_2^!\\
		%	\tGY_{P^-}&=&Rp_{1*}\circ R\GG_{P^-}\circ p_2^!\\
		%	\tGF_{P^-}&=&Rp_{2!}\circ (-)_{P^-}\circ p_1^{-1}
		%\end{eqnarray*}
		
		%\begin{theorem}\cite[Theorem 3.7.7.]{KS90}
			
		%\end{theorem}
		\begin{definition}
			For $F\in D^b(E)$, 
			\[
				F^\wedge :=Rp_{2*}R\GG_{P^+}(p_1^{-1}F) 
			\]
			is called the Fourier-Sato transform of $F$. 
		\end{definition}
		
		For $F\in D^b(X)$, we recall that $\nu_M(F)$ is an object oj $D^b(T_MX)$. 
		
		\begin{definition}%\cite[Definition 4.3.1.]{KS90}
		\label{microlocalization}
			For $F\in D^b(X)$, the {\bf microlocalization} of $F$ along $M$ denoted by $\mu_M(F)$ is the Fourier-Sato transform of $\nu_M(F)$: 
			\begin{equation}
				\mu_M(F)=\nu_M(F)^\wedge. 
			\end{equation}
		\end{definition}
		
		%\mu hom
		
		\begin{definition}
		\label{def of muhom}
			Let $p_i:X\times X\to X$ denote the $i$-th projection and let $\GD\subset X\times X$ denote the diagonal of $X\times X$. For $F, G\in D^b(X)$, one sets
			\begin{equation}
				\mu hom(F, G):=\mu_{\GD}\RHOM(q_2^{-1}F, q_1^!G). 
			\end{equation}
		\end{definition}
		
	\subsection{Radon transform}
	\label{radom transform}\phantom{a}
		%cohomologically constructible
		%$BN(\GO_X, \GO_Y)$
		Let $X, Y, Z$ be manifolds and $\GO_X, \GO_Y, \GO_Z$ be open subsets of $T^*X, T^*Y, T^*Z$ respectively. 
		The {\bf antipodal morphism} on the cotangent bundle is $a:T^*X\to T^*X, a(x; \xi)\mapsto(x; -\xi)$. We set the antipodal subset $\GO_X^a:=\{(x; \xi)\in T^*X; (x; -\xi)\in\GO_X\}$. 
		We denote by $q_1:X\times Y\to X$ and $q_2: X\times Y\to Y$ the projections respectively, and by $p_1:T^*(X\times Y)\to T^*X$ and $p_2:T^*(X\times Y)\to T^*Y$ the projections respectively. We set the morphism $p_2^a:=a\circ p_2:T^*X\times T^*Y\to T^*Y$. 
		
		\begin{definition}
			Let $\BN(X, Y; \GO_X, \GO_Y)$ denote the full subcategory of $D^b(X\times Y; \GO_X\times T^*Y)$ consisting of objects $K$ satisfying: 
			\begin{enumerate}[(i)]
				\item $\SS(K)\cap(\GO_X\times T^*Y)\subset \GO_X\times\GO_Y^a$, 
				\item $p_1:\SS(K)\cap(\GO_X\times T^*Y)\to \GO_X$ is proper. 
			\end{enumerate}
			If there is no risk of confusion, we write $\BN(\GO_X, \GO_Y)$ instead of $\BN(X, Y; \GO_X, \GO_Y)$. 
			If $Y=\{\pt\}$, then $\BN(\GO, \{\pt\})\simeq D^b(X; \GO_X)$. 
		\end{definition}
		
		\begin{proposition}\cite[Proposition7.1.2.(iii)]{KS90}
			Assume $K\in\BN(\GO_X, \GO_Y)$ and $L\in\BN(\GO_Y, \GO_Z)$. Then $K\circ L\in\BN(\GO_X, \GO_Z)$. 
		\end{proposition}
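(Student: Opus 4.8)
The plan is to unwind the composition and propagate the microsupport through the three operations defining $\circ_Y$. With $X_1=X$, $X_2=Y$, $X_3=Z$ and $p_{ij}:X_{123}\to X_{ij}$ the projections, Definition \ref{composition_def} gives $K\circ L=Rp_{13!}(p_{12}^{-1}K\otimes p_{23}^{-1}L)$. Since $p_{12}$ and $p_{23}$ are submersions, the functorial microsupport estimate for inverse images yields $\SS(p_{12}^{-1}K)\subset\{(x,\xi;y,\eta;z,0)\mid (x,\xi;y,\eta)\in\SS(K)\}$ and, symmetrically, $\SS(p_{23}^{-1}L)\subset\{(x,0;y,\eta';z,\zeta')\mid (y,\eta';z,\zeta')\in\SS(L)\}$; the tensor-product estimate gives $\SS(p_{12}^{-1}K\otimes p_{23}^{-1}L)\subset\SS(p_{12}^{-1}K)+\SS(p_{23}^{-1}L)$ (fiberwise sum); and the estimate for $Rp_{13!}$ produces a bound of the shape
\[
  \SS(K\circ L)\subset\{(x,\xi;z,\zeta)\mid \exists\,(y,\eta)\in T^*Y:\ (x,\xi;y,\eta)\in\SS(K),\ (y,-\eta;z,\zeta)\in\SS(L)\},
\]
where the $Y$-component summing to $0$ inside $p_{13}^{-1}$ is what forces the occurrence of the antipodal $a$. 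I would in fact establish this as a \emph{localized} estimate over $\GO_X\times T^*Z$, so that only the part of $\SS(K)$ over $\GO_X$ and the part of $\SS(L)$ over $\GO_Y$ are ever used.

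Granting this estimate, the two conditions defining $\BN(\GO_X,\GO_Z)$ are a purely set-theoretic consequence. For condition (i): if $(x,\xi;z,\zeta)\in\SS(K\circ L)$ with $(x,\xi)\in\GO_X$, pick $(y,\eta)$ as above; then $(x,\xi;y,\eta)\in\SS(K)\cap(\GO_X\times T^*Y)$, so (i) for $K$ gives $(y,\eta)\in\GO_Y^a$, i.e.\ $(y,-\eta)\in\GO_Y$; hence $(y,-\eta;z,\zeta)\in\SS(L)\cap(\GO_Y\times T^*Z)$, and (i) for $L$ gives $(z,\zeta)\in\GO_Z^a$. For condition (ii): for a compact $C\subset\GO_X$, the set $p_1^{-1}(C)\cap\SS(K)\cap(\GO_X\times T^*Y)$ is compact by (ii) for $K$, hence so is the compact $C'\subset\GO_Y$ obtained from it by $(y,\eta)\mapsto(y,-\eta)$; then the preimage of $C'$ in $\SS(L)\cap(\GO_Y\times T^*Z)$ is compact by (ii) for $L$, and the fiber product of these two compacta over $T^*Y$ maps onto the part of $\SS(K\circ L)\cap(\GO_X\times T^*Z)$ lying over $C$, which is therefore compact; thus $p_1$ is proper on $\SS(K\circ L)\cap(\GO_X\times T^*Z)$.

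The main obstacle is the localized microsupport estimate itself, not the deductions from it. The naive transversality hypothesis for the tensor-product estimate --- that $\SS(p_{12}^{-1}K)$ and $(\SS(p_{23}^{-1}L))^a$ meet only along the zero section --- can fail, since the two pulled-back microsupports may share a nonzero covector in the $Y$-direction sitting over the zero sections of $X$ and $Z$; moreover $p_{13}$ is not proper on the whole support of $p_{12}^{-1}K\otimes p_{23}^{-1}L$. What rescues the argument is exactly hypothesis (ii): the properness of $p_1$ on $\SS(K)\cap(\GO_X\times T^*Y)$ (and its analogue for $L$) makes the fiberwise-addition map on microsupports, and the restriction of $p_{13}$ to the relevant locus, proper once one passes to the region over $\GO_X$ --- which is what makes the $\otimes$- and $Rp_{13!}$-estimates applicable and the resulting sets closed. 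Setting this up cleanly in the localized categories $D^b(X\times Z;\GO_X\times T^*Z)$, while keeping careful track of which portions of $\SS(K)$ and $\SS(L)$ are genuinely involved, is the technical core; once that is in place, the computations of the previous paragraph are routine.
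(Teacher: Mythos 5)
The paper itself does not prove this statement; it is cited as \cite[Proposition 7.1.2(iii)]{KS90}, so there is no internal proof to compare against. Your proposal follows the same strategy as Kashiwara--Schapira: reduce to a microsupport estimate for the kernel composition $Rp_{13!}(p_{12}^{-1}K\otimes p_{23}^{-1}L)$ and then read off conditions (i) and (ii) of $\BN(\GO_X,\GO_Z)$ set-theoretically. The set-theoretic deductions in your second paragraph are correct: given the bound you display, condition (i) follows by chasing $(y,\eta)$ through $\GO_Y^a$, and condition (ii) follows because the fiber over a compact $C\subset\GO_X$ is sandwiched, via the fiber product over $T^*Y$ and the antipodal flip, between compacta furnished by the properness of $p_1$ on $\SS(K)$ and on $\SS(L)$.

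The genuine gap is exactly the one you flag in your final paragraph, and it is not a minor one: the microsupport estimate
\[
\SS(K\circ L)\cap(\GO_X\times T^*Z)\subset\bigl\{(x,\xi;z,\zeta)\mid\exists\,(y,\eta):\ (x,\xi;y,\eta)\in\SS(K),\ (y,-\eta;z,\zeta)\in\SS(L)\bigr\}
\]
does \emph{not} follow from the naive chain ``$f^{-1}$ estimate, then $\otimes$ estimate, then $Rf_!$ estimate'' because the non-characteristicity hypothesis for $\otimes$ and the properness hypothesis for $Rp_{13!}$ both fail in general, as you note. In \cite{KS90} this estimate is the content of Proposition 7.1.1, whose proof is a separate and substantially technical argument that exploits precisely the properness of $p_1$ on $\SS(K)\cap(\GO_X\times T^*Y)$ (and the analogue for $L$) together with a local cutoff over $\GO_X$ to make the chain of estimates applicable in the localized category. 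Your proposal names the right rescue mechanism but does not carry it out; as written, ``the properness of $p_1$ makes the fiberwise-addition map and the restriction of $p_{13}$ proper once one passes to the region over $\GO_X$'' is an assertion, not a proof. To close the gap you would need to either reproduce the argument of \cite[Prop.~7.1.1]{KS90} or cite it explicitly as a black box, and then your paragraph two becomes a complete proof of (iii).
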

		
		We denote by $r:X\times Y\to Y\times X$ the canonical morphism $r(x, y)=(y, x)$. 
		
		\begin{proposition}\cite[Proposition7.1.4.(i) and Proposition7.1.9.]{KS90}
		\label{replacement}
			Let $K\in D^b(X\times Y)$. 
			\begin{enumerate}[(i)]
				\item Assume $r_*K\in\BN(\GO_Y^a, \GO_X^a)$. The functor $\GY_K:D^b(X)\to D^b(Y)$ induces a well-defined functor from $D^b(X; \GO_X)$ to $D^b(Y; \GO_Y)$. 
				\item Assume $K$ is cohomologically constructible and $K\in\BN(\GO_X, \GO_Y)$. Set $K^*=r_*\RHOM(K, p_2^!\Bk_Y)$. Then $\GF_K\simeq \GY_{K^*}$ as functor from $D^b(Y; \GO_Y)$ to $D^b(X; \GO_X)$. 
			\end{enumerate}
		\end{proposition}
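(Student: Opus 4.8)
The plan is to prove part (i) by a microsupport estimate for $\GY_K$, and part (ii) by a biduality computation that rewrites $\GY_{K^*}$ as an integral transform of the same type as $\GF_K$ and then compares the two after localising. (That both functors are defined on the localised categories is not an issue: $\GF_K(G)\simeq K\circ_Y G$ is handled by the stability of $\BN$ under composition shown above, and $\GY_{K^*}$ by part (i) applied to the kernel $K^*$, whose hypothesis turns out to be equivalent to $K\in\BN(\GO_X,\GO_Y)$.)

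\textbf{Part (i).} Recall that $D^b(X;\GO_X)$ is the Verdier quotient of $D^b(X)$ by the triangulated subcategory $D^b_{V_X}(X)$, where $V_X=T^*X\bs\GO_X$, and likewise for $Y$; a triangulated functor descends to the quotients precisely when it maps $D^b_{V_X}(X)$ into $D^b_{V_Y}(Y)$. Since $\GY_K$ is triangulated, it therefore suffices to prove that $\SS(F)\cap\GO_X=\emptyset$ implies $\SS(\GY_K(F))\cap\GO_Y=\emptyset$. First I would unwind $\GY_K(F)=Rp_{2*}\RHOM(K,p_1^!F)$ and estimate microsupports in three steps: since $p_1\colon X\times Y\to X$ is a submersion, $\SS(p_1^!F)=\{(x,y;\xi,0):(x;\xi)\in\SS(F)\}$ (up to the zero section); next $\SS(\RHOM(K,p_1^!F))$ is contained in the closed fibrewise sum of $\SS(K)^a$ and $\SS(p_1^!F)$; finally $\SS(Rp_{2*}(-))$ is contained in the set of $(y;\eta)$ such that $(x,y;0,\eta)$ lies in the microsupport for some $x$. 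The second defining condition of $\BN(\GO_Y^a,\GO_X^a)$ for $r_*K$ — properness of the projection $\SS(r_*K)\cap(\GO_Y^a\times T^*X)\to\GO_Y^a$ — is precisely what legitimises the last step over $\GO_Y$: it lets one cut $\RHOM(K,p_1^!F)$ off outside a region on which $p_2$ is proper, without changing the class in $D^b(X\times Y;\GO_X\times T^*Y)$, so that $Rp_{2*}$ and $Rp_{2!}$ agree there.

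Chaining these bounds, a point $(y_0;\eta_0)\in\GO_Y\cap\SS(\GY_K(F))$ produces $x_0$ and $\xi_1$ with $(x_0,y_0;\xi_1,-\eta_0)\in\SS(K)$ and $(x_0;\xi_1)\in\SS(F)$ (modulo the zero section, which one arranges $\GO_X$ to avoid). The first defining condition of $\BN(\GO_Y^a,\GO_X^a)$ for $r_*K$ says that $(x,y;\xi,\eta)\in\SS(K)$ together with $(y;-\eta)\in\GO_Y$ forces $(x;\xi)\in\GO_X$; applied to $(x_0,y_0;\xi_1,-\eta_0)$ it yields $(x_0;\xi_1)\in\GO_X$, contradicting $\SS(F)\cap\GO_X=\emptyset$. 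The antipodal appearing in $\BN(\GO_Y^a,\GO_X^a)$ is exactly the one absorbing the antipodal produced by $\RHOM$ and the reflection $r$; keeping track of these signs is the delicate point here.

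\textbf{Part (ii).} The plan is to rewrite $\GY_{K^*}$ so that the pushforward ``$*$'' is replaced by ``$!$''. Let $q_1,q_2\colon Y\times X\to Y,X$ be the projections. Using that $r$ is an involutive diffeomorphism, so $Rq_{2*}\circ r_*=Rp_{1*}$, together with the internal-hom adjunction $\RHOM(r_*E,H)\simeq r_*\RHOM(E,r^!H)$ and $r^!\circ q_1^!=(q_1\circ r)^!=p_2^!$, one computes
\[
\GY_{K^*}(G)=Rq_{2*}\RHOM(K^*,q_1^!G)\simeq Rp_{1*}\RHOM\bigl(\RHOM(K,p_2^!\Bk_Y),\,p_2^!G\bigr).
\]
Since $K$ is cohomologically constructible, $p_2$ is smooth so $p_2^!G\simeq p_2^{-1}G\otimes p_2^!\Bk_Y$, and $p_2^!\Bk_Y$ equals $p_1^{-1}\omega_X$ with $\omega_X=a_X^!\Bk_{\pt}$ being $\otimes$-invertible; combining these facts with the biduality theorem for cohomologically constructible sheaves and the identity $\RHOM(E,G)\simeq\RHOM(E,\Bk_{X\times Y})\otimes G$, valid for cohomologically constructible $E$, gives a canonical isomorphism $\RHOM(\RHOM(K,p_2^!\Bk_Y),p_2^!G)\simeq K\otimes p_2^{-1}G$. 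Hence $\GY_{K^*}(G)\simeq Rp_{1*}(K\otimes p_2^{-1}G)$, whereas $\GF_K(G)=Rp_{1!}(K\otimes p_2^{-1}G)$, so it remains to show that the canonical morphism $Rp_{1!}(K\otimes p_2^{-1}G)\to Rp_{1*}(K\otimes p_2^{-1}G)$ becomes an isomorphism in $D^b(X;\GO_X)$.

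For that step I would invoke $K\in\BN(\GO_X,\GO_Y)$: one has $\SS(K)\cap(\GO_X\times T^*Y)\subset\GO_X\times\GO_Y^a$ and $p_1$ is proper on this set. Because $\SS(p_2^{-1}G)\subset 0_X\times T^*Y$ for every $G\in D^b(Y)$, the tensor-product microsupport estimate shows that over $\GO_X$ the microsupport of $K\otimes p_2^{-1}G$ still projects properly to $X$; a cut-off argument then replaces $K\otimes p_2^{-1}G$ by a subsheaf supported where $p_1$ is proper without changing its image in $D^b(X;\GO_X)$, and on that subsheaf $Rp_{1!}$ and $Rp_{1*}$ coincide. \textbf{The main obstacle} is precisely this step — verifying that the properness coming from $\BN(\GO_X,\GO_Y)$ permits the cut-off without disturbing the microsupport of $K\otimes p_2^{-1}G$ inside $\GO_X$ — together with, in part (i), the careful tracking of the antipodal maps throughout the chain of microsupport estimates.
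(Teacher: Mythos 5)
The paper states this proposition as a citation of \cite[Proposition 7.1.4(i) and Proposition 7.1.9]{KS90} and offers no proof of its own, so there is no in-paper argument to compare against. Your outline does match the KS90 strategy: part (i) via the chain of microsupport estimates for $p_1^!$, $\RHOM$, and $Rp_{2*}$, with the properness clause of $\BN$ licensing the cut-off that makes the $Rp_{2*}$ estimate valid; and part (ii) via the adjunction rewriting of $\GY_{K^*}$ using $r_*$, the biduality/$\RHOM$-$\otimes$ formula for cohomologically constructible kernels, and a final cut-off showing $Rp_{1!}\to Rp_{1*}$ becomes an isomorphism in $D^b(X;\GO_X)$. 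The bookkeeping with the antipodal map in (i) is done correctly (you correctly deduce from $\SS(r_*K)\cap(\GO_Y^a\times T^*X)\subset\GO_Y^a\times\GO_X$ that $(x,y;\xi,\eta)\in\SS(K)$ with $(y;-\eta)\in\GO_Y$ forces $(x;\xi)\in\GO_X$). The two places you flag as "the main obstacle" — the non-characteristicity hypotheses needed for the $\RHOM$ microsupport estimate, and making the cut-off compatible with the localization — are exactly the content of KS90's microlocal cut-off lemmas (around Propositions 5.4.14, 6.1.4 and Lemma 7.1.2 there), and the argument as you sketch it would work once those are invoked; in (ii), the identity $\RHOM(E,H)\simeq D'E\otimes H$ for cohomologically constructible $E$ is KS90 Proposition 3.4.4 and legitimises the biduality step. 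So: correct in outline, same route as the source the paper cites, with the acknowledged gaps precisely where KS90's technical machinery carries the load.
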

		
		\begin{proposition}\cite[Proposition7.1.11.]{KS90}
		\label{muhom_iso}
			Let $K\in D^b(X\times Y), F\in D^b(X; \GO_X), G\in D^b(Y; \GO_Y)$. 
			\begin{enumerate}[(i)]
				\item Assume $K\in\BN(\GO_X, \GO_Y)$. Then there is a natural isomorphism in $D^b(\GO_X)$: 
				\[
					Rp_{1*}\mu hom(K, \RHOM(q_2^{-1}G,  q_1^!F))\simeq \mu hom(\GF_K(G), F). 
				\]
				\item Assume $r_*K\in\BN(\GO_Y^a, \GO_X^a)$. Then there is a natural isomorphism in $D^b(\GO_Y)$: 
				\[
					Rp_{2*}^a\mu hom(K, \RHOM(q_2^{-1}G,  q_1^!F))\simeq \mu hom(G, \GY_K(F)). 
				\]
			\end{enumerate}
		\end{proposition}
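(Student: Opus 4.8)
\noindent\emph{Proof strategy.} The plan is to deduce both isomorphisms from the functorial behaviour of $\mu hom$ under Grothendieck's operations and under the composition bifunctor $\circ$ (as developed in Kashiwara--Schapira \cite{KS90}), applied to the situation in which $F$ and $G$ are regarded as kernels with one factor equal to a point; statement (ii) will then be obtained by running the dual version of the argument for (i), with $r_*$ accounting for the interchange of the two factors and the antipodes for the covariant versus contravariant roles.

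For (i), I would first unwind $\GF_K(G)\simeq Rq_{1!}(K\otimes q_2^{-1}G)$ (Example \ref{composition functor}, Definition \ref{composition_def}), so that the left-hand side becomes $\mu hom(K\circ G,\, F)$ with $F$ viewed as a kernel $X\to\{\pt\}$. The key input is the compatibility of $\mu hom$ with $\circ$: composing against a kernel and then forming $\mu hom$ agrees with forming $\mu hom$ against that kernel and then performing the corresponding cotangent-bundle direct image. Concretely this rewrites $\mu hom(K\circ G,\,F)$ as a push-forward along $p_1:T^*(X\times Y)\to T^*X$ of a microlocal sheaf built functorially from $K$, $G$ and $F$, and the tensor--hom adjunction together with the defining formula $\mu hom(A,B)=\mu_{\GD}\RHOM(q_2^{-1}A,q_1^!B)$ identifies that microlocal sheaf with $\mu hom(K,\,\RHOM(q_2^{-1}G,q_1^!F))$. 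What must be verified along the way is that, over $\GO_X$, the occurring push-forward is the one with proper supports — so that $Rp_{1!}$ and $Rp_{1*}$ coincide there and the output really lies in $D^b(\GO_X)$: this is precisely what condition (ii) in the definition of $\BN(\GO_X,\GO_Y)$ (properness of $p_1$ on $\SS(K)\cap(\GO_X\times T^*Y)$) provides, while condition (i) supplies the microsupport estimates that make $\mu hom$ well-behaved on that set.

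For (ii), I would argue symmetrically starting from $\GY_K(F)\simeq Rq_{2*}\RHOM(K,q_1^!F)$, now using the compatibility of $\mu hom$ with $Rf_*$ and $\RHOM$ in the covariant slot; the shuffle $\RHOM(q_2^{-1}G,\RHOM(K,q_1^!F))\simeq\RHOM(K,\RHOM(q_2^{-1}G,q_1^!F))$ produces the same microlocal sheaf $\mu hom(K,\RHOM(q_2^{-1}G,q_1^!F))$, and the direct image along the cotangent projection now carries the antipode, i.e. it is $Rp_{2*}^a$. Here the role played by the two conditions of $\BN(\GO_X,\GO_Y)$ in part (i) is played by the two conditions defining $r_*K\in\BN(\GO_Y^a,\GO_X^a)$: they supply, respectively, the microsupport estimate and the properness of $p_2^a$ on the relevant part of $\SS(K)$ over $\GO_Y$, so that the asserted isomorphism is a statement in $D^b(\GO_Y)$. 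When $K$ is moreover cohomologically constructible one could instead read off (ii) from (i) via the identity $\GF_{K^*}\simeq\GY_K$ of Proposition \ref{replacement}(ii), but under the stated hypothesis the direct argument is needed.

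The hard part will be none of the structural steps but the microlocal bookkeeping: one has to track carefully the several cotangent correspondences attached to $q_1$, $q_2$ and to the diagonals entering the definition of $\mu hom$, together with the antipodal maps, and — most delicately — check that on the open subsets $\GO_X$ and $\GO_Y$ the direct-image functors appearing are the well-behaved (proper) ones and that the resulting complexes have microsupport inside $\GO_X$, resp. $\GO_Y$. Matching these properness requirements against the two defining conditions of the category $\BN$ is the technical crux of the proof.
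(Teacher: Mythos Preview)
The paper does not supply its own proof of this proposition: it is quoted verbatim as \cite[Proposition~7.1.11.]{KS90} and used as a black box in the proof of Theorem~\ref{quantized contact transform}, with no argument given. There is therefore nothing in the paper to compare your proposal against. Your sketch is a reasonable outline of the Kashiwara--Schapira argument (compatibility of $\mu hom$ with kernel composition plus the properness/microsupport conditions encoded in $\BN$), but if you intend to actually carry it out you should consult \cite[\S7.1]{KS90} directly rather than the present paper.
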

		
		The subset $\GL\subset T^*X$ is called {\bf conic} if it is closed by the action $\Rbb^+$. 
		
		\begin{definition}
		\label{contact transform}
			Let $\GL\subset\GO_X\times\GO_Y^a$ be a closed conic subset, where $\GO_X$ and $\GO_Y$ are open subsets of $T^*X$ and $T^*Y$ respectively. Assume that
			\[
				p_1|_\GL:\GL\to \GO_X \text{\ and\ } p_2^a|_\GL:\GL\to \GO_Y
			\]
			are homeomorphisms. Then one sets $\chi:=p_1|_\GL\circ(p_2^a|_\GL)^{-1}:\GO_Y\to\GO_X$. It is called a {\bf contact transform} if $\GL$ is smooth and Lagrangian and $p_j$ is diffeomorphism. 
		\end{definition}

		\begin{definition}
		%\label{quantized contact transform}
			Let $X$ and $Y$ be two manifolds of the same dimension. Suppose $\GO_X\subset \dot T^*X$ and $\GO_Y\subset \dot T^*Y$ are two open subsets, and $\chi:\GO_Y\to\GO_X$ is a contact transform. A sheaf $K\in D^b(X\times Y)$ is a {\bf quantized contact transform} associated with $\chi$ from $(Y, \GO_Y)$ to $(X, \GO_X)$ if: 
			\begin{enumerate}[(i)]
				\item $K$ is cohomologically constructible, 
				\item $(p_1^{-1}(\GO_X)\cup (p^a_2)^{-1}(\GO_Y))\cap \SS(K)\subset \GL$, 
				\item the natural morphism $\Bk_\GL\to \mu hom(K, K)|_\GL$ is an isomorphism in $D^b(\GL)$. 
			\end{enumerate}
		\end{definition}
		
		\begin{theorem}\cite[Theorem 7.2.1.]{KS90}
		\label{quantized contact transform}
			Suppose $K\in D^b(X\times Y)$ is a quantized contact transform from $(Y, \GO_Y)$ to $(X, \GO_X)$. Then
			\begin{equation}
			 	\GF_K:D^b(Y; \GO_Y)\to D^b(X; \GO_X)
			\end{equation}
			is an equivalence of categories. 
		\end{theorem}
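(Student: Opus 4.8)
The plan is to descend $\GF_K$ and its right adjoint $\GY_K$ to the localized categories and then to show that $\GF_{K^*}\simeq\GY_K$ is a quasi-inverse of $\GF_K$; the only genuinely microlocal input will be condition (iii) of the definition of a quantized contact transform, fed in through the $\mu hom$-isomorphisms of Proposition~\ref{muhom_iso}.

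First I would rewrite the hypotheses on $K$ as membership statements in the categories $\BN$. From $\GL\subset\GO_X\times\GO_Y^a$ and condition (ii), namely $(p_1^{-1}(\GO_X)\cup(p_2^a)^{-1}(\GO_Y))\cap\SS(K)\subset\GL$, one gets $\SS(K)\cap(\GO_X\times T^*Y)\subset\GL\subset\GO_X\times\GO_Y^a$, which is the first condition defining $\BN(X,Y;\GO_X,\GO_Y)$; and since $\SS(K)\cap(\GO_X\times T^*Y)$ is then a closed subset of $\GL$, on which $p_1$ restricts to a homeomorphism onto $\GO_X$, the map $p_1\colon\SS(K)\cap(\GO_X\times T^*Y)\to\GO_X$ is proper, which is the second. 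Hence $K\in\BN(\GO_X,\GO_Y)$, and symmetrically $r_*K\in\BN(\GO_Y^a,\GO_X^a)$; using that $K$ is cohomologically constructible (condition (i)) one also gets $K^*:=r_*\RHOM(K,p_2^!\Bk_Y)\in\BN(\GO_Y,\GO_X)$ together with the biduality isomorphism $(K^*)^*\simeq K$. By Proposition~\ref{replacement}(i) the functor $\GY_K$ then descends to a functor $D^b(X;\GO_X)\to D^b(Y;\GO_Y)$; by Proposition~\ref{replacement}(ii), $\GF_K$ descends to $D^b(Y;\GO_Y)\to D^b(X;\GO_X)$ and $\GF_K\simeq\GY_{K^*}$ there; applying part~(ii) to $K^*$ (also cohomologically constructible) gives $\GF_{K^*}\simeq\GY_{(K^*)^*}\simeq\GY_K$. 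The adjunction $(\GF_K,\GY_K)$ of Example~\ref{composition functor} descends as well, so it is enough to prove that its unit $G\to\GY_K\GF_K G$ and counit $\epsilon_F\colon\GF_K\GY_K F\to F$ are isomorphisms in $D^b(Y;\GO_Y)$, resp.\ $D^b(X;\GO_X)$, for all $G$, resp.\ $F$; then $\GF_{K^*}\simeq\GY_K$ is a quasi-inverse of $\GF_K$.

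To prove the counit $\epsilon_F$ is an isomorphism in $D^b(X;\GO_X)$, write $C$ for its cone; it suffices to show $\SS(C)\cap\GO_X=\emptyset$, and for this it is enough to show that $\mu hom(\epsilon_F,F')$ is an isomorphism over $\GO_X$ for all $F'\in D^b(X;\GO_X)$ — indeed then $\mu hom(C,F')|_{\GO_X}=0$ for all such $F'$, and $F'=C$ gives $\SS(C)\cap\GO_X=\emptyset$. Here $\mu hom$ does the work. Put $G:=\GY_K F$. By Proposition~\ref{muhom_iso}(i) one has $\mu hom(\GF_K G,F')|_{\GO_X}\simeq Rp_{1*}\mu hom(K,\RHOM(q_2^{-1}G,q_1^!F'))$, and by Proposition~\ref{muhom_iso}(ii) one has $\mu hom(G,\GY_K F')|_{\GO_Y}\simeq Rp_{2*}^a$ of the very same object on $T^*(X\times Y)$; its support lies in $\SS(K)$, hence in $\GL$ over $p_1^{-1}(\GO_X)\cup(p_2^a)^{-1}(\GO_Y)$, where $p_1|_\GL$ and $p_2^a|_\GL$ are homeomorphisms onto $\GO_X$ and $\GO_Y$, so transporting along $\chi=p_1|_\GL\circ(p_2^a|_\GL)^{-1}$ yields $\mu hom(\GF_K G,F')|_{\GO_X}\simeq\chi_*(\mu hom(G,\GY_K F')|_{\GO_Y})$. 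Running the analogous computation with the isomorphism $\Bk_\GL\simeq\mu hom(K,K)|_\GL$ of condition (iii) inserted produces, compatibly with $\epsilon_F$, the identification $\mu hom(F,F')|_{\GO_X}\simeq\chi_*(\mu hom(G,\GY_K F')|_{\GO_Y})$. Comparing the two displays that $\mu hom(\epsilon_F,F')\colon\mu hom(F,F')|_{\GO_X}\to\mu hom(\GF_K\GY_K F,F')|_{\GO_X}$ is an isomorphism, so $\epsilon_F$ is one in $D^b(X;\GO_X)$; the unit is handled in the same way with $r_*K$ in place of $K$, and $\GF_K$ is an equivalence.

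The main obstacle is the previous paragraph: one must keep precise track of the microsupport of $K$ and of all the $\mu hom$-objects produced by Proposition~\ref{muhom_iso}, checking that over the relevant open cones they are carried by $\GL$ so that $Rp_{1*}$ and $Rp_{2*}^a$ act on them as transport along the homeomorphisms $p_1|_\GL$ and $p_2^a|_\GL$, and then arranging the computation so that the local triviality $\Bk_\GL\simeq\mu hom(K,K)|_\GL$ — condition (iii), which is precisely where the hypothesis that $K$ is a \emph{quantized} contact transform, rather than merely a kernel supported on $\GL$, enters — forces the comparison morphism to be invertible over $\GO_X$. Everything else — the translation of the axioms into $\BN$-membership, the descent of $\GF_K$ and $\GY_K$ via Proposition~\ref{replacement}, and the reduction to the unit and counit of the adjunction — is formal, using the composition formalism of Section~\ref{composition}.
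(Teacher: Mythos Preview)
Your overall architecture is correct and the reductions via Proposition~\ref{replacement} and the $\BN$-membership verifications are exactly as in the paper. The route diverges at the key microlocal step.

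The paper does \emph{not} check the unit and counit of $(\GF_K,\GY_K)$ object by object. Instead it works once at the kernel level: it constructs an explicit morphism $\alpha\colon\Bk_{\GD_Y}\to K^*\circ K$ in $\BN(\GO_Y,\GO_Y)$ (coming from $\Bk_{X\times Y}\to\RHOM(K,K)\simeq\tilde j^!E$ with $E=\RHOM(q_{12}^{-1}K,q_{13}^!K)$), and then shows $\alpha$ is an isomorphism. For this it uses Proposition~\ref{muhom_iso} not for arbitrary $F'$ but in a single ``inflated'' instance: with $Z=Y$, $G=\Bk_{\GD_Y}$, $F=K$, and the auxiliary kernel $i_Z(K)=q_{13}^{\prime-1}K\otimes q_{24}^{\prime-1}\Bk_{\GD_Z}$. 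This produces $\chi_*\mu hom(\Bk_{\GD_Y},K^*\circ K)\simeq\mu hom(K,K)$, so condition~(iii) gives that $\mu_{\GD_Y}(\alpha)$ is an isomorphism; together with $\SS(K^*\circ K)\cap(\GO_Y\times T^*Y)\subset T^*_{\GD_Y}(Y\times Y)$ and Proposition~\ref{microsupport_prop}, this forces $\alpha$ itself to be an isomorphism. The symmetric statement and Lemma~\ref{adjoint_inverse} finish.

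Your approach trades the single kernel computation for a family of object-level ones. That is legitimate, but the sentence ``running the analogous computation with $\Bk_\GL\simeq\mu hom(K,K)|_\GL$ inserted produces $\mu hom(F,F')|_{\GO_X}\simeq\chi_*(\mu hom(\GY_K F,\GY_K F')|_{\GO_Y})$'' is doing all of the real work and is not yet a proof: it is precisely the statement that $\GY_K$ is a $\mu hom$-equivalence, and making it honest requires a computation on $T^*(X\times Y)$ that, once written out, is very close to the paper's $\mu_{\GD_Y}(\alpha)$ argument (one must produce a morphism from $p_1^{-1}\mu hom(F,F')$ into $\mu hom(K,\RHOM(q_2^{-1}\GY_K F,q_1^!F'))$ over $\GL$, tensor with $\mu hom(K,K)|_\GL$, and then invoke~(iii)). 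The paper's route buys you a stronger output---the kernel isomorphism $K^*\circ K\simeq\Bk_{\GD_Y}$---and packages the passage from ``$\mu hom$-isomorphism'' to ``isomorphism in the localized category'' via Proposition~\ref{microsupport_prop} rather than the test-object argument with $F'=C$.
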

		
		We denote by $\GD_X\subset X\times X$ and $\GD_Y\subset Y\times Y$ the diagonal subsets respectively. 
		To prove the above theorem, we introduce the following lemma. 
		
		\begin{lemma}\cite[Proposition7.1.10.]{KS90}
		\label{adjoint_inverse}
			Let $K\in\BN(\GO_X, \GO_Y)$ and $L\in\BN(\GO_Y, \GO_X)$. Assume $K\circ L\simeq \Bk_{\GD_X}$ in $D^b(X\times X; \GO_X\times T^*X)$ and $L\circ K\simeq \Bk_{\GD_Y}$ in $D^b(Y\times Y; \GO_Y\times T^*Y)$. Then $\GF_K:D^b(Y; \GO_Y)\to D^b(X; \GO_X)$ and $\GF_L:D^b(X; \GO_X)\to D^b(Y; \GO_Y)$ are equivalences of categories, inverse to each other.  
		\end{lemma}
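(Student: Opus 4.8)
The plan is to recognise $\GF_K$ and $\GF_L$ as ``composition with a kernel'', so that the associativity of the composition bifunctor identifies $\GF_K\circ\GF_L$ and $\GF_L\circ\GF_K$ with the functors attached to the composed kernels $K\circ L$ and $L\circ K$, and then to use that $\Bk_{\GD_X}$ (resp.\ $\Bk_{\GD_Y}$) is the kernel of the identity functor. First I would check that $\GF_K$ and $\GF_L$ descend to the microlocalized categories, i.e.\ define functors $\GF_K\colon D^b(Y;\GO_Y)\to D^b(X;\GO_X)$ and $\GF_L\colon D^b(X;\GO_X)\to D^b(Y;\GO_Y)$. Since $K$ is cohomologically constructible and $K\in\BN(\GO_X,\GO_Y)$, Proposition \ref{replacement}(ii) identifies $\GF_K$ with $\GY_{K^*}$ for $K^*=r_*\RHOM(K,p_2^!\Bk_Y)$, which is well defined on the localized categories by Proposition \ref{replacement}(i) once one checks that $r_*K^*$ lies in the appropriate $\BN$-category; the same applies to $\GF_L$. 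Concretely this comes down to the microsupport estimates for $\GF_K(F)=Rp_{1!}(K\otimes p_2^{-1}F)$ together with the properness clause in the definition of $\BN$, which force $\GF_K$ to send a sheaf microsupported off $\GO_Y$ to one microsupported off $\GO_X$.

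For $F\in D^b(X)$, regarding $F$ as a kernel on $X\times\{\pt\}$ and applying the associativity of the composition bifunctor (Proposition \ref{associativity of composition}), one obtains
\[
	\GF_K\bigl(\GF_L(F)\bigr)=K\circ_Y(L\circ_X F)\;\simeq\;(K\circ_Y L)\circ_X F=\GF_{K\circ L}(F),
\]
and symmetrically $\GF_L(\GF_K(G))\simeq\GF_{L\circ K}(G)$ for $G\in D^b(Y)$. These isomorphisms are natural, so already on $D^b(X)$ and $D^b(Y)$ we have $\GF_K\circ\GF_L\simeq\GF_{K\circ L}$ and $\GF_L\circ\GF_K\simeq\GF_{L\circ K}$. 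The delicate point is that the hypotheses provide $K\circ L\simeq\Bk_{\GD_X}$ and $L\circ K\simeq\Bk_{\GD_Y}$ only in the \emph{localized} categories $D^b(X\times X;\GO_X\times T^*X)$ and $D^b(Y\times Y;\GO_Y\times T^*Y)$; hence one must show that $K\mapsto\GF_K$, viewed as taking values in functors into $D^b(X;\GO_X)$, depends only on the image of the kernel in $D^b(X\times X;\GO_X\times T^*X)$. Equivalently, a kernel $N\in D^b(X\times X)$ with $\SS(N)\cap(\GO_X\times T^*X)=\emptyset$ must induce a functor $\GF_N$ with $\SS(\GF_N(F))\cap\GO_X=\emptyset$ for every $F$; this follows from the microsupport estimate for the composition of kernels (of the shape $\SS(K_{12}\circ K_{23})\subset\SS(K_{12})\circ\SS(K_{23})$) under the properness built into the $\BN$-conditions — the same ingredient behind the stability $\BN(\GO_X,\GO_Y)\circ\BN(\GO_Y,\GO_Z)\subset\BN(\GO_X,\GO_Z)$ recalled above. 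I expect this compatibility of $K\mapsto\GF_K$ with the microlocal localization to be the main obstacle of the proof.

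Finally I would identify the diagonal kernel with the identity: by Propositions \ref{cut-off_tensor} and \ref{i!}, $\Bk_{\GD_X}\otimes p_2^{-1}F\simeq i_!i^{-1}p_2^{-1}F$ for the inclusion $i\colon\GD_X\hookrightarrow X\times X$, and since $p_1\circ i$ and $p_2\circ i$ are both the canonical isomorphism $\GD_X\simeq X$, one gets $\GF_{\Bk_{\GD_X}}(F)=Rp_{1!}(i_!i^{-1}p_2^{-1}F)\simeq F$ naturally in $F$; hence $\GF_{\Bk_{\GD_X}}\simeq\id$ on $D^b(X)$, a fortiori on $D^b(X;\GO_X)$, and likewise $\GF_{\Bk_{\GD_Y}}\simeq\id$ on $D^b(Y;\GO_Y)$. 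Combining the three steps, $\GF_K\circ\GF_L\simeq\GF_{K\circ L}\simeq\GF_{\Bk_{\GD_X}}\simeq\id_{D^b(X;\GO_X)}$ and $\GF_L\circ\GF_K\simeq\GF_{L\circ K}\simeq\GF_{\Bk_{\GD_Y}}\simeq\id_{D^b(Y;\GO_Y)}$, so $\GF_K$ and $\GF_L$ are equivalences of categories, quasi-inverse to one another.
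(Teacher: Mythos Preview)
The paper does not actually prove this lemma: it is stated with a citation to \cite[Proposition 7.1.10]{KS90} and then immediately used in the proof of Theorem \ref{quantized contact transform}, so there is no ``paper's own proof'' to compare against. Your outline is the standard argument from \cite{KS90}: associativity of $\circ$ gives $\GF_K\circ\GF_L\simeq\GF_{K\circ L}$ and $\GF_L\circ\GF_K\simeq\GF_{L\circ K}$, the diagonal kernel represents the identity, and the substantive step is checking that $N\mapsto\GF_N$ factors through the localization $D^b(X\times X;\GO_X\times T^*X)$ so that the hypothesized isomorphisms $K\circ L\simeq\Bk_{\GD_X}$ and $L\circ K\simeq\Bk_{\GD_Y}$ in the localized kernel categories yield isomorphisms of functors on the localized target categories. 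You correctly flag this last point as the crux, and your sketch via the microsupport estimate $\SS(K_{12}\circ K_{23})\subset\SS(K_{12})\circ\SS(K_{23})$ together with the properness in the $\BN$-conditions is the right mechanism.

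One correction: you invoke cohomological constructibility of $K$ to justify that $\GF_K$ descends to the localized categories, but that hypothesis is \emph{not} part of the lemma (it appears only later, in the quantized contact transform setting). The well-definedness of $\GF_K\colon D^b(Y;\GO_Y)\to D^b(X;\GO_X)$ follows directly from $K\in\BN(\GO_X,\GO_Y)$ via the microsupport bound for $Rp_{1!}(K\otimes p_2^{-1}F)$ under the properness condition on $p_1|_{\SS(K)}$; Proposition \ref{replacement} is not needed here. Apart from this, your argument is correct and matches the approach in \cite{KS90}.
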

		
		\hspace{-1.5em}{\bf proof of Theorem 3.20.}
			
			Since $(p_1^{-1}(\GO_X)\cup (p^a_2)^{-1}(\GO_Y))\cap \SS(K)\subset \GL\subset \GO_X\times\GO_Y^a$, $\SS(K)\cap(\GO_X\times T^*Y)\subset\GO_X\times\GO_Y^a$. By the homeomorphism $p_1|_\GL:\GL\to \GO_X$, $\SS(K)\cap(\GO_X\times T^*Y)\to\GO_X$ is proper. Then $K\in\BN(\GO_X, \GO_Y)$ and $r_*K\in\BN(\GO_Y^a, \GO_X^a)$. 
			Consider the Cartesian square
			\[
			\xymatrix{
				X\times Y\ar[r]^-{\tilde{j}}\ar[d]_{q_2}&X\times Y\times Y\ar[d]^{q_{23}}\\
				Y\ar[r]_{j}\ar@{}@<0ex>[ur]|{\square}&Y\times Y, 
			}
			\]
			where $j$ and $\tilde j$ are the diagonal embeddings and $q_{ij}$ is the projection. Set $E=\RHOM(q_{12}^{-1}K, q_{13}^!K)$. This is an object of $D^b(X\times Y\times Y)$. Since $K$ is cohomologically constructible, setting $K^*=r_*\RHOM(K, q_1^!\Bk_X)$, we have $K^*\circ_{X} K\simeq Rq_{23*}E$ in $\BN(\GO_Y, \GO_Y)$ by Proposition \ref{replacement}. 
			On the other hand, we have $\tilde j^!E\simeq \RHOM(\tilde j^{-1}q_{12}^{-1}K, \tilde j^!q_{13}^!K)\simeq\RHOM(K, K)$. Hence we obtain the canonical morphism
			\[
				\Bk_{X\times Y}\to\RHOM(K, K)\to \tilde j^!E, 
			\]
			which induces
			\[
			\Bk_Y\to Rq_{2*}\Bk_{X\times Y}\to Rq_{2*}\tilde j^!E\simeq \tilde j^!Rq_{23*}E. 
			\]
			Therefore we get the following morphism
			\[
				\Ga:\Bk_{\GD_Y}\simeq Rj_!\Bk_Y\to K^*\circ K\quad \text{in}\quad \BN(\GO_Y, \GO_Y). 
			\]
			
			We shall prove that $\Ga$ is an isomorphism. Let $Z$ be another manifold, let $F\in D^b(X\times Z; \GO_X\times T^*Z)$, $G\in D^b(Y\times Z; \GO_Y\times T^*Z)$. We use $\chi:\GO_Y\times T^*Z\to \GO_X\times T^*Z$ to denote a contact transform which is induced by the contact transform $\GO_Y\to \GO_X$. For two projections $q_{13}^\prime:X\times Z\times Y\times Z\to X\times Y$ and $q_{24}^\prime:X\times Z\times Y\times Z\to Z\times Z$, we set 
			\[
				i_Z(K):=q_{13}^{\prime-1}K\otimes q_{24}^{\prime-1}\Bk_{\GD_Z}\quad \text{in}\quad \BN(\GO_X\times\GO_Z, \GO_Y\times\GO_Z). 
			\]
			By Proposition \ref{muhom_iso}(ii), we have a natural isomorphism in $D^b(\GO_X\times T^*Z)$: 
			\[
				\chi_*\mu hom(G, \GY_{i_Z(K)}(F))\simeq \mu hom(\GF_{i_Z(K)}(G), F). 
			\]
			Considering $Z=Y, G=\Bk_{\GD_Y}$ and $F=K$, we obtain the isomorphism
			\[
				\chi_*\mu hom(\Bk_{\GD_Y}, \GY_{i_Z(K)}(K))\simeq \mu hom(K, K). 
			\]
			Since $\GY_{i_Z(K)}(K)\simeq K^*\circ_X K$ by Proposition \ref{replacement}(ii), we obtain the isomorphism %on $\GO_Y\subset T^*_{\GD_Y}(Y\times Y)$: 
			\[
				\mu_{\GD_Y}(\Ga): \mu_{\GD_Y}(\Bk_{\GD_Y})\simeq \mu_{\GD_Y}(K^*\circ K). 
			\]
			Since $\SS(K^*\circ K)\cap(\GO_Y\times T^*Y)\subset T^*_{\GD_Y}(Y\times Y)$, Proposition \ref{microsupport_prop} and the above isomorphism imply that $\Ga$ is an isomorphism in $\BN(\GO_X, \GO_Y)$. We can prove similarly the isomorphism $\Bk_{\GD_X}\simeq K\circ K^{*\prime}$ in $\BN(\GO_X, \GO_X)$ where $K^{*\prime}=r_*\RHOM(K, q_2^!\Bk_Y)$. By Lemma \ref{adjoint_inverse}, we can see that $\GF_{K}:D^b(Y; \GO_Y)\to D^b(X; \GO_X)$ is an equivalence of categories. 
		\qed

		By \cite{Gao17}, the {\bf Radon transform} is an example of the above equivalence. We use $A$ to denote the subset $A: =\{(y, t, x)\in S^{n-1}\times \Rbb\times\Rbb^n;x\cdot y\leq t\}$. Let $\pi_1:S^{n-1}\times\Rbb\times\Rbb^n\to S^{n-1}\times\Rbb$ and $\pi_2:S^{n-1}\times\Rbb\times\Rbb^n\to\Rbb^n$ be projections. The Radon transform for sheaves is defined to be 
		\begin{equation}
			\GF_{\Bk_A}:D^b(\Rbb^n)\to D^b(S^{n-1}\times \Rbb), F\mapsto \Bk_A\circ F=R\pi_{1!}(\Bk_A\otimes \pi_2^{-1}F). 
		\end{equation}
		By Theorem \ref{quantized contact transform}, we get the following equivalence of categories. 
		\begin{theorem}\cite[Theorem 3.3.]{Gao17}
			The Radon transform for sheaves induces an equivalence between the localized sheaf categories
			\begin{equation}
				\GF_{\Bk_A}:D^b(\Rbb^n, \dot T^*\Rbb^n)\to D^b(S^{n-1}\times \Rbb, T^{*, +}(S^{n-1}\times\Rbb)). 
			\end{equation}
			Here $T^{*, +}(S^{n-1}\times\Rbb)=T^*S^{n-1}\times T^{*, +}\Rbb$ and $T^{*, +}\Rbb=\{(t; \tau)\in T^*\Rbb; \tau>0\}$. 
		\end{theorem}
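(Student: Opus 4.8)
The plan is to recognize $\GF_{\Bk_A}$ as an instance of the equivalence supplied by Theorem \ref{quantized contact transform}, so that the whole proof reduces to checking that $\Bk_A$ is a quantized contact transform. Set $X = S^{n-1}\times\Rbb$, $Y = \Rbb^n$ (so $\dim X = n = \dim Y$), $\GO_X = T^{*,+}(S^{n-1}\times\Rbb)$ and $\GO_Y = \dot T^*\Rbb^n$; then $A\subset X\times Y$, and in the notation of Example \ref{composition functor}, with $\pi_1 = p_1$ and $\pi_2 = p_2$ the two projections of $X\times Y$, the Radon transform is precisely $\GF_{\Bk_A} = \GF_K$ for $K = \Bk_A$. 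Hence it suffices to produce a contact transform $\chi:\GO_Y\to\GO_X$ for which $\Bk_A$ satisfies the three defining conditions of a quantized contact transform.

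The first task is to compute $\SS(\Bk_A)$. Writing $\psi(y,t,x) = x\cdot y - t$, the region $A = \{\psi\le 0\}$ is a closed domain whose boundary $W = \{\psi = 0\}$ is a smooth hypersurface, because $d\psi$ --- whose $\p_t$-component is the nonzero constant $-1$ --- never vanishes. Consequently $\SS(\Bk_A) = 0_{X\times Y}\cup\GL$, where $\GL$ is the connected component of $\dot T^*_W(X\times Y)$ (a conormal bundle minus its zero section, with exactly two such components) along which $\Bk_A$ is ``supported''. Identifying $T^*_yS^{n-1}$ with $y^\perp\subset\Rbb^n$ and using $d\psi = (x-(x\cdot y)y,\ -1,\ y)$, one finds
\[
\GL = \{((y;-\lambda(x-ty)),(t;\lambda),(x;-\lambda y)) : x\cdot y = t,\ \lambda>0\}\subset T^*X\times T^*Y.
\]
I would then verify by inspection that $\GL$ is a smooth conic Lagrangian, that $\GL\subset\GO_X\times\GO_Y^a$ (the $T^*\Rbb$-covector $\lambda$ is positive and the $T^*\Rbb^n$-covector $-\lambda y$ is nonzero), and that both projections restrict to diffeomorphisms: $p_1|_\GL:\GL\to\GO_X$ has inverse sending $((y;\eta),(t;\tau))$ to the point with $\lambda = \tau$ and $x = ty - \eta/\lambda$, while $p_2^a|_\GL:\GL\to\GO_Y$ has inverse sending $(x;\xi)$ to the point with $\lambda = |\xi|$, $y = \xi/|\xi|$, $t = x\cdot y$. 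This makes $\chi := p_1|_\GL\circ(p_2^a|_\GL)^{-1}:\GO_Y\to\GO_X$ a contact transform.

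It remains to check the three conditions. First, $A$ is closed and subanalytic (cut out by a single analytic inequality) in a real analytic manifold, so $\Bk_A$ is $\Rbb$-constructible, hence cohomologically constructible. Second, since $\SS(\Bk_A) = 0_{X\times Y}\cup\GL$ with $p_1(0_{X\times Y}) = 0_X$ disjoint from $\GO_X$ and $p_2^a(0_{X\times Y}) = 0_Y$ disjoint from $\GO_Y$, while $\GL\subset p_1^{-1}(\GO_X)$, one gets $(p_1^{-1}(\GO_X)\cup(p_2^a)^{-1}(\GO_Y))\cap\SS(\Bk_A) = \GL$. Third, the natural morphism $\Bk_\GL\to\mu hom(\Bk_A,\Bk_A)|_\GL$ is an isomorphism because $\Bk_A$, the constant sheaf on a closed half-space with smooth boundary, is a simple sheaf along $\GL$ in the sense of \cite{KS90}; this is the local model computation for $\mu hom$ of such sheaves. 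Granting these three points, $\Bk_A$ is a quantized contact transform for $\chi$, and Theorem \ref{quantized contact transform} immediately yields that $\GF_{\Bk_A}:D^b(\Rbb^n;\dot T^*\Rbb^n)\to D^b(S^{n-1}\times\Rbb;T^{*,+}(S^{n-1}\times\Rbb))$ is an equivalence of categories.

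The main obstacles are the explicit microsupport computation on $S^{n-1}\times\Rbb\times\Rbb^n$ --- in particular pinning down the correct co-orientation of $W$ so that $\GL$ lands in $\GO_X\times\GO_Y^a$ and not merely in $\GO_X\times T^*Y$, and confirming that $p_1|_\GL$ and $p_2^a|_\GL$ are \emph{global} diffeomorphisms onto $\GO_X$ and $\GO_Y$ --- together with condition three, which is the only genuinely microlocal input and requires either the $\mu_\GD$/normal-deformation machinery or a citation of the simplicity of $\Bk_A$ along its conormal. Everything else is bookkeeping with $\pi_1,\pi_2$ and the dictionary of Example \ref{composition functor}.
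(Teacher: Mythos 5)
Your proof is correct and takes essentially the same approach as the paper: both recognize $\GF_{\Bk_A}$ as an instance of Theorem \ref{quantized contact transform}, but the paper defers the verification that $\Bk_A$ is a quantized contact transform entirely to \cite{Gao17}, whereas you carry out the sign-sensitive microsupport computation, the inclusion $\GL\subset\GO_X\times\GO_Y^a$, and the diffeomorphism checks for $p_1|_\GL$ and $p_2^a|_\GL$ explicitly. The one item you assert without argument is condition (iii), the simplicity of $\Bk_A$ along $\GL$; a precise citation to the local-model $\mu hom$ computation for the constant sheaf on a closed domain with smooth boundary (e.g.\ \cite[\S 7.5]{KS90}) would close that last gap, and as a cosmetic point $\SS(\Bk_A)$ is the zero section over $A$ (not all of $0_{X\times Y}$) together with $\GL$, though this does not affect your verification of condition (ii).
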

		
		\begin{example}
			Let $\|\cdot\|$ denote the Euclidean norm on $\Rbb^n$. 
%			For $r\geq0$, we set some sheaves
%			\[
%				F=\Bk_{\{x\in\Rbb^n; \|x\|< r\}},\quad G=\Bk_{\{x\in\Rbb^n; \|x\|\leq r\}},\quad H=\Bk_{\{x\in\Rbb^n; \|x\|=r\}}
%			\]
			For $r\geq0$, we set $F=\Bk_{\{x\in\Rbb^n; \|x\|\leq r\}}$
			in $D^b(\Rbb^n)$. We calculate the compositions of $\Bk_A$ and
			% these sheaves respectively. 
			$F$. 
			We set $B:=A\cap\{(y, t, x)\in S^{n-1}\times \Rbb\times \Rbb^n; \|x\|\leq r\}$. 
			Since $\pi_1$ is proper on $B$, we have
			\begin{align*}
				%\Bk_A\circ G&\simeq R\pi_{1!}(\Bk_A\otimes\pi_2^{-1}G)\\
				\Bk_A\circ F&\simeq R\pi_{1!}(\Bk_A\otimes\pi_2^{-1}F)\\
				&\simeq R\pi_{1!}(\Bk_A\otimes\Bk_{S^{n-1}\times\Rbb\times\{x\in\Rbb^n; \|x\|\leq r\}})\\
				&\simeq R\pi_{1!}\Bk_{B}\\
				&\simeq R\pi_{1*}\Bk_B
			\end{align*}
			in $D^b(S^{n-1}\times \Rbb)$. Let $j:B\to S^{n-1}\times \Rbb\times \Rbb^n$ denote the inclusion. Since $j_*=j_!$, $j_*$ is exact by Proposition \ref{i!}. 
			%Then we obtain the morphism $\Bk_{S^{n-1}\times[-r, \infty)}\to \Bk_A\circ G$ 
			Then we obtain the morphism $\Bk_{S^{n-1}\times[-r, \infty)}\to \Bk_A\circ F$ 
			which is the image of identity in $\Hom_{D^b(B)}(\Bk_B,\Bk_B)$ by the following isomorphism 
			\begin{align*}
				%\Hom_{D^b(S^{n-1}\times \Rbb)}(\Bk_{S^{n-1}\times [-r, \infty)}, \Bk_A\circ G)&\simeq \Hom_{D^b(\Rbb^n)}(\pi_1^{-1}\Bk_{S^{n-1}\times [-r, \infty)}, \Bk_B)\\
				\Hom_{D^b(S^{n-1}\times \Rbb)}(\Bk_{S^{n-1}\times [-r, \infty)}, \Bk_A\circ F)&\simeq \Hom_{D^b(\Rbb^n)}(\pi_1^{-1}\Bk_{S^{n-1}\times [-r, \infty)}, \Bk_B)\\
				&\simeq \Hom_{D^b(\Rbb^n)}(\pi_1^{-1}\Bk_{S^{n-1}\times [-r, \infty)}, j_*j^{-1}\Bk_{S^{n-1}\times \Rbb\times\Rbb^n})\\
				&\simeq \Hom_{D^b(B)}(j^{-1}\pi_1^{-1}\Bk_{S^{n-1}\times [-r, \infty)}, j^{-1}\Bk_{S^{n-1}\times \Rbb\times\Rbb^n})\\
				&\simeq \Hom_{D^b(B)}(\Bk_{B}, \Bk_{B}). 
			\end{align*}
			For any $(y, t)\in S^{n-1}\times\Rbb$, since $B\cap\pi_1^{-1}(y, t)$ is contractible if $(y, t)\in S^{n-1}\times[-r, \infty)$ and $B\cap\pi_1^{-1}(y, t)=\emptyset$ if $(y, t)\not\in S^{n-1}\times[-r, \infty)$, we have
			\begin{align*}
				(\Bk_{S^{n-1}\times[-r, \infty)})_{(y, t)}&\simeq (R\pi_{1*}\Bk_B)_{(y, t)}\\
				&\simeq R\GG(\pi_1^{-1}(y, t); \Bk_B|_{\pi_1^{-1}(y, t)})\\
				%&\simeq (\Bk_A\circ G)_{(y, t)}. 
				&\simeq (\Bk_A\circ F)_{(y, t)}. 
			\end{align*}
			%By Proposition \ref{stalk_iso}, we have $\Bk_{S^{n-1}\times[-r, \infty)}\simeq \Bk_A\circ G$. 
			By Proposition \ref{stalk_iso}, we have $\Bk_{S^{n-1}\times[-r, \infty)}\simeq \Bk_A\circ F$. 
			
			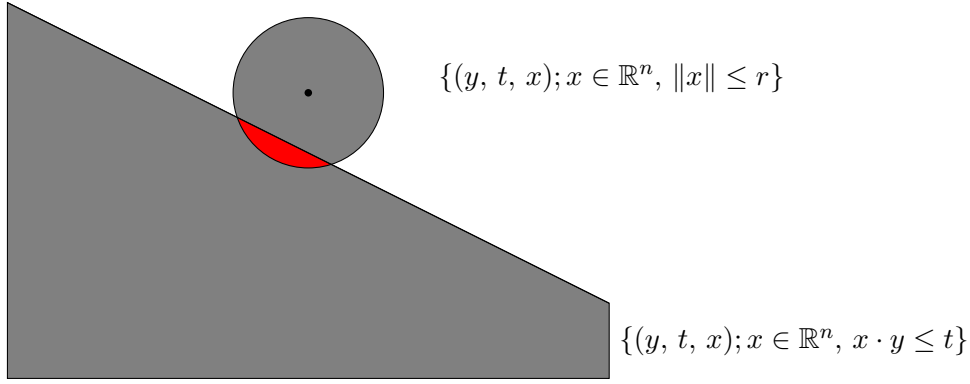
\begin{figure}[htbp]
				\centering
				\begin{tikzpicture}
				\coordinate[label=left:$0$](x)at(0, 1.8);
				\coordinate[label=right:$\{(y{\rm ,\ }t{\rm ,\ }x); x\in\Rbb^n{\rm ,\ } x\cdot y\leq t\}$](t)at(4, -1.5);
				\coordinate[label=right:$\{(y{\rm ,\ }t{\rm ,\ }x); x\in\Rbb^n{\rm ,\ } \|x\|\leq r\}$](ball)at(1.6, 2);
				\filldraw[fill=gray, opacity=0.1](-4, 3)--(-4,-2)--(4, -2)--(4, -1)--cycle;
				\fill[gray, opacity=0.1](x) circle (10mm);
				\fill[black, opacity=0.4](x) circle (0.5mm);
				\begin{scope}
					\clip (x) circle (10mm);
					\filldraw[fill=red, opacity=0.4](-4, 3)--(-4,-2)--(4, -2)--(4, -1)--cycle;
				\end{scope}
				\draw(x) circle (10mm);
				\draw (-4, 3)--(4, -1);
	 			\end{tikzpicture}			
				\caption{It is a figure on $\pi_1^{-1}(y, t)\simeq \Rbb^n$. If $(y, t)\in S^{n-1}\times[-r, \infty)$, $B\cap\pi_1^{-1}(y, t)$ is contractible. }
			\end{figure}
			
%			Similarly, we get $\Bk_{S^{n-1}\times[-r, \infty)}\oplus\Bk_{S^{n-1}\times[r, \infty)}[1-n]\to\Bk_A\circ H$, where $[l]:D^b(S^{n-1}\times\Rbb)\to D^b(S^{n-1}\times\Rbb)$ is a shift functor for $l\in\Zbb$. By Proposition \ref{distinguish triangle}, there is a distinguish triangle
%			\[
%				F\to G\to H\overset{+1}{\to}
%			\]
%			in $D^b(\Rbb^n)$. Applying the Radon transform, we obtain
%			\[
%				\Bk_A\circ F\to \Bk_{S^{n-1}\times[-r, \infty)}\to \Bk_{S^{n-1}\times[-r, \infty)}\oplus\Bk_{S^{n-1}\times[r, \infty)}[1-n]\overset{+1}{\to}. 
%			\]
%			Then we have $\Bk_A\circ F\simeq \Bk_{S^{n-1}\times[r, \infty)}[-n]$. 
			
		\end{example}
	
\section{Convolution and interleaving distance}
\label{CD}\phantom{a}
	Throughout this chapter, $\Vbb$ is a finite dimensional real vector space and is given a Euclidean structure. Let $\|\cdot\|$ denote the norm on $\Vbb$. For $r\geq0$, one sets
	\begin{equation}
		B_r=\{x\in\Vbb; \|x\|\leq r\},\quad \inte B_r=\{x\in\Vbb; \|x\|<r\}. 
	\end{equation}
	For $a\in\Rbb$, one defines
	\begin{equation}
		K_a=
		\left\{
		\begin{array}{cc}
			\Bk_{B_a}&{\rm if}\ a\geq0, \\
			\Bk_{\inte B_{-a}}[\dim \Vbb]&{\rm if}\ a<0
		\end{array}
		\right. 
	\end{equation}
	in $D^b(\Vbb)$. Here $[l]$ is the shift functor of degree $l\in \Zbb$. 

	\subsection{Comparison of convolution and kernel composition}
	\label{comparison}\phantom{a}
		%Convolution
		We define the convolution on $D^b(\Vbb)$. 
		\begin{definition}
		\label{convolution}
			Let $q_1, q_2, s:\Vbb\times\Vbb\to\Vbb$ as follows: 
			\begin{equation}
				q_1(v_1, v_2)=v_1, \ q_2(v_1, v_2)=v_2, \ s(v_1, v_2)=v_1+v_2. 
			\end{equation}
			One defines the convolution bifunctor by setting $F*G:=Rs_!(q_1^{-1}F\otimes q_2^{-1}G)$, where $F, G\in D^b(\Vbb)$. 	
		\end{definition}
		The following proposition is proved in \cite[\S 3.1]{KS18}. 
		\begin{proposition}
		\label{property of Ka}
			Let $a, b\in\Rbb$ and $F\in D^b(\Vbb)$. There are functorial isomorphisms
			\begin{equation}
				K_a*(K_b*F)\simeq K_{a+b}*F,\quad K_0*F\simeq F. 
			\end{equation}
		\end{proposition}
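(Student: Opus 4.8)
The plan is to reduce everything to the associativity of the convolution bifunctor together with explicit computation of convolutions of the basic kernels $\Bk_{B_a}$ and $\Bk_{\inte B_{-a}}[\dim\Vbb]$. First I would establish that $(-)*(-)$ is associative, i.e. $(F*G)*H\simeq F*(G*H)$, by the same bookkeeping used in the proof of Proposition \ref{associativity of tilde composition}: write both sides as $Rs_{3!}$ of a triple external tensor product $q_1^{-1}F\otimes q_2^{-1}G\otimes q_3^{-1}H$ over $\Vbb^3\to\Vbb$, $(v_1,v_2,v_3)\mapsto v_1+v_2+v_3$, using Proposition \ref{cartesian_commutative_derived.ver} and Proposition \ref{projection_formula_derived.ver} to push the proper direct images through. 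In fact, as remarked in Section \ref{comparison}, the convolution is a special case of the composition bifunctor (taking $X_1=X_2=X_3=\Vbb$ and the kernels built from the graph of $s$), so associativity of $*$ follows from Proposition \ref{associativity of composition} directly. Commutativity $F*G\simeq G*F$ is immediate from the symmetry of $s$.

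The second isomorphism $K_0*F\simeq F$ is the easy half: $K_0=\Bk_{B_0}=\Bk_{\{0\}}$, and $\Bk_{\{0\}}*F=Rs_!(q_1^{-1}\Bk_{\{0\}}\otimes q_2^{-1}F)$. The support of $q_1^{-1}\Bk_{\{0\}}$ is $\{0\}\times\Vbb$, on which $s$ restricts to the identity isomorphism $\{0\}\times\Vbb\xrightarrow{\sim}\Vbb$; restricting the tensor product to this closed set and using that $s_!$ of a sheaf supported there is computed by this homeomorphism (Corollary \ref{push stalk} / Proposition \ref{proper stalk}), one gets $\Bk_{\{0\}}*F\simeq F$. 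One can also check this stalkwise via Proposition \ref{proper stalk}: $(\Bk_{\{0\}}*F)_v\simeq R\GG_c(s^{-1}(v); (q_1^{-1}\Bk_{\{0\}}\otimes q_2^{-1}F)|_{s^{-1}(v)})$, and the restricted sheaf is $\Bk$ placed at the single point $(0,v)$, giving $F_v$; then conclude by Proposition \ref{stalk_iso}.

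The substantive computation is $K_a*K_b\simeq K_{a+b}$; granting this, the first isomorphism follows from associativity: $K_a*(K_b*F)\simeq(K_a*K_b)*F\simeq K_{a+b}*F$. I would split into cases according to the signs of $a$ and $b$. When $a,b\geq0$, one must show $\Bk_{B_a}*\Bk_{B_b}\simeq\Bk_{B_{a+b}}$: here $B_a*B_b$ as a Minkowski sum is $B_{a+b}$, and the key point is that the fibers $s^{-1}(v)\cap(B_a\times B_b)$ are nonempty contractible (convex) for $\|v\|\le a+b$ and empty otherwise, so by Proposition \ref{proper stalk} the stalk of $\Bk_{B_a}*\Bk_{B_b}$ at $v$ is $\Bk$ for $\|v\|\le a+b$ and $0$ otherwise — matching $\Bk_{B_{a+b}}$ by Proposition \ref{stalk_iso} (one also checks the natural map realizing this isomorphism, built as in the Radon-transform example in Section \ref{radom transform}). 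The mixed-sign cases, where one of the kernels is the shifted open-ball sheaf $\Bk_{\inte B_{-b}}[\dim\Vbb]$, are the delicate ones: the fibers of $s$ over the tensor product are no longer all contractible — they can be open balls, whose compactly supported cohomology $R\GG_c$ is concentrated in degree $\dim\Vbb$ — and the shift $[\dim\Vbb]$ is exactly what accounts for this, while the cancellation when $a=-b$ (so $K_a*K_{-a}\simeq K_0$) requires tracking that $\Bk_{B_a}*\Bk_{\inte B_a}[\dim\Vbb]$ has a single nonvanishing stalk at the origin. I expect this sign-mixed fiberwise cohomology computation — keeping the degrees and the one potentially nonreduced fiber straight — to be the main obstacle; everything else is formal consequences of associativity and the six-operations machinery recalled in Sections \ref{fundamental operations}–\ref{composition}. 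Since the proposition is quoted from \cite[\S 3.1]{KS18}, in the paper it suffices to cite that reference and sketch the reduction to associativity.
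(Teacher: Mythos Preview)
The paper gives no proof of this proposition; it simply refers the reader to \cite[\S 3.1]{KS18} and moves on. Your proposal goes well beyond that: you correctly reduce to associativity of $*$ (which indeed follows by the same triple-pushforward bookkeeping used for $\circ$ and $\tcp$) together with the unit computation $K_0*F\simeq F$ and the kernel identity $K_a*K_b\simeq K_{a+b}$, and you correctly isolate the mixed-sign fiber computation as the only nontrivial step. This is precisely the strategy carried out in \cite{KS18}, so your sketch is faithful to the source the paper cites; you even anticipated in your final sentence that the paper would content itself with the citation.

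One small sharpening for the mixed-sign case you flagged as the obstacle: for $a>0$ and $0<\|v\|<2a$ the fiber $B_a\cap(v-\inte B_a)$ is a convex body whose boundary is partly closed (the piece on $\partial B_a$) and partly open (the piece on $\partial(v-B_a)$), and $R\GG_c$ of the constant sheaf on such a ``half-open'' convex set vanishes. A clean way to see this without a case-by-case fiber analysis is to convolve the distinguished triangle $\Bk_{\inte B_a}\to\Bk_{B_a}\to\Bk_{\partial B_a}\overset{+1}{\to}$ with $\Bk_{B_a}$ (or with $\Bk_{\inte B_a}$) and use the closed--closed case you already handled; this is how \cite{KS18} organizes the computation.
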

		For $a\geq b$ and $F\in D^b(\Vbb)$, there are canonical morphisms
		\begin{align*}
		\label{canonical morphism of K_a}
			\chi_{a, b}^K&:K_a\to K_b, \\
			\chi_{a, b}^K*F&:K_a*F\to K_b*F. 
		\end{align*}
		In particular, one has $\chi_{a, 0}^K*F:K_a*F\to F$ when $a\geq0$. 
		\begin{definition} 
		\label{a-isomorphism}
			\begin{enumerate}[(i)]
				\item Let $F, G\in D^b(\Vbb)$ and let $a\geq0$. The sheaves $F, G$ are {\bf $a$-interleaved} if there are two morphisms $f:K_a*F\to G$ and $g:K_a*G\to F$ such that the diagrams
				\begin{equation}
				\label{a-iso-dia}
					\xymatrix{
						K_{2a}*F\ar[rr]^{\chi_{2a,0}^K*F}\ar[dr]_{K_{a}*f}&&F\\
						&K_a*G, \ar[ur]_{g}\ar@{}[u]|{\circlearrowright}&
					}\quad 
					\xymatrix{
						&K_a*F\ar[dr]^{f}\ar@{}[d]|{\circlearrowright}&\\
						K_{2a}*G\ar[rr]_{\chi_{2a,0}^K*G}\ar[ur]^{K_{a}*g}&&G
						}
				\end{equation}
				are commutative. The pair of morphisms $(f, g)$ is called an {\bf $a$-interleaving}. 
				\item One sets 
				\begin{equation}
				%\label{convolution distance}
					 d_C(F, G):=\inf(\{a\geq0; F\ {\rm and}\ G\ {\rm are}\ a{\text -}{\rm interleaved}\}\cup\{\infty\}). 
				\end{equation}
				It is called the {\bf convolution distance} on $D^b(\Vbb)$. 
			\end{enumerate}
		\end{definition}
		Indeed, the convolution distance is a pseudo-extended distance. Namely, for $F, G, H\in D^b(\Vbb)$, $d_C:D^b(\Vbb)\times D^b(\Vbb)\to \Rbb_{\geq0}\cup\{\infty\}$ is a non-negative extended real-value function which satisfies
		\begin{enumerate}[(i)]
			\item $d_C(F, F)=0$, 
			\item $d_C(F, G)=d_C(G, F)$, 
			\item $d_C(F, H)\leq d_C(F, G)+d_C(G, H)$. 
		\end{enumerate}
		Berkouk and Ginot \cite[Proposition 6.9]{BG22} showed that there exists a pair of sheaves $F, G$ on $\Rbb$ such that $d_C(F, G)=0$ and $F\not\simeq G$. 
		
		The convolution functor $K_a*(-)$ can be represented  by a composition functor. 
		\begin{proposition}\cite[Proposition 2.1.10]{PS23}
		\label{conv and comp on V}
			For $a\geq0$, one sets $\GD_a=\{(v_1, v_2)\in\Vbb\times\Vbb;\|v_1-v_2\|\leq a\}$. Then there is a canonical  isomorphism of functor
			\begin{equation}
				\Bk_{\GD_a}\circ (-)\simeq K_a*(-). 
			\end{equation}
		\end{proposition}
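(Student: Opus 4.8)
The plan is to deduce the isomorphism from a single linear change of coordinates on $\Vbb\times\Vbb$ that intertwines the two ``composition diagrams''. First I would write out both sides in the same shape. By Example \ref{composition functor} (the case of a point target), $\Bk_{\GD_a}\circ F=Rp_{1!}(\Bk_{\GD_a}\otimes p_2^{-1}F)$, where $p_1,p_2\colon\Vbb\times\Vbb\to\Vbb$ are the two projections; and by Definition \ref{convolution} together with $K_a=\Bk_{B_a}$ for $a\geq0$, $K_a*F=Rs_!(q_1^{-1}\Bk_{B_a}\otimes q_2^{-1}F)$ with $q_1,q_2,s$ as in Definition \ref{convolution}.

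Next I would introduce the diffeomorphism $\phi\colon\Vbb\times\Vbb\to\Vbb\times\Vbb$, $\phi(v_1,v_2)=(v_1+v_2,\,v_2)$, whose inverse is $(u_1,u_2)\mapsto(u_1-u_2,\,u_2)$. A direct check gives $p_1\circ\phi=s$, $p_2\circ\phi=q_2$, and $\phi^{-1}(\GD_a)=q_1^{-1}(B_a)$; the last identity is just $\|(v_1+v_2)-v_2\|\leq a\iff\|v_1\|\leq a$. Hence, using that inverse image carries the constant sheaf on a locally closed subset to the constant sheaf on its preimage (a special case of Proposition \ref{cut-off_pullback}), one obtains $\phi^{-1}\Bk_{\GD_a}\simeq\Bk_{\phi^{-1}(\GD_a)}=\Bk_{q_1^{-1}(B_a)}\simeq q_1^{-1}\Bk_{B_a}$.

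The computation is then immediate. Since $\phi$ is a diffeomorphism, $R\phi_!\circ\phi^{-1}\simeq\id$, and $\phi^{-1}$ commutes with $\otimes$ by Proposition \ref{tensor_pullback}; together with functoriality of proper direct image ($Rp_{1!}\circ R\phi_!\simeq R(p_1\circ\phi)_!$) this yields
\[
Rp_{1!}(\Bk_{\GD_a}\otimes p_2^{-1}F)\simeq R(p_1\circ\phi)_!\big(\phi^{-1}\Bk_{\GD_a}\otimes(p_2\circ\phi)^{-1}F\big)=Rs_!\big(q_1^{-1}\Bk_{B_a}\otimes q_2^{-1}F\big)=K_a*F.
\]
Each step is functorial in $F\in D^b(\Vbb)$, so this is an isomorphism of functors, canonical once $\phi$ is fixed.

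I do not expect a real obstacle here: the substantive content is the verification that $\phi$ intertwines the two families of structure maps and sends $q_1^{-1}(B_a)$ onto $\GD_a$, and the bookkeeping that the resulting isomorphism is natural in $F$ (which follows since $R\phi_!\phi^{-1}\simeq\id$, the commutation of $\phi^{-1}$ with $\otimes$, and the functoriality of $R(-)_!$ are all natural). It may be worth noting in passing that $s$ and the projections are submersions of manifolds, so they fall under the hypotheses for which the six operations and the composition bifunctor were set up in Sections \ref{fundamental operations} and \ref{composition}.
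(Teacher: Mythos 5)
Your argument is correct and is essentially the paper's own proof: the paper establishes the statement by proving the $M\times\Vbb$ analogue (Proposition \ref{conv and comp on MV}) via the change of coordinates $\widetilde u(x,v_1,v_2)=(x,v_1-v_2,v_2)$ and then specializes to $M=\pt$, while you use the inverse map $\phi=(v_1,v_2)\mapsto(v_1+v_2,v_2)$ and run the computation in the opposite direction (from $\Bk_{\GD_a}\circ F$ to $K_a*F$ instead of from the convolution to the composition). The intertwining relations you verify and the invocations of $R\phi_!\phi^{-1}\simeq\id$, commutation of $\phi^{-1}$ with $\otimes$, and the identification $\phi^{-1}\Bk_{\GD_a}\simeq q_1^{-1}\Bk_{B_a}$ are exactly the ingredients of the paper's computation.
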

		%We will give a proof of Proposition \ref{conv and comp on MV} which is a generalized proposition of the above that.  
			%\begin{proof}
			%	Let 
			%\end{proof}
		\begin{remark}
		\label{rewrite dia1}
		We can write the following diagrams instead of \eqnref{a-iso-dia}: 
			\begin{equation}
			\label{cmp-a-iso}
				\xymatrix{
					\Bk_{\GD_{2a}}\circ F\ar[rr]%^{\chi_{2a,0}*F}
					\ar[dr]_{\Bk_{\GD_{a}}\circ f}&&F\\
					&\Bk_{\GD_{a}}\circ G, \ar[ur]_{g}\ar@{}[u]|{\circlearrowright}&
				}\quad 
				\xymatrix{
					&\Bk_{\GD_{a}}\circ F\ar[dr]^{f}\ar@{}[d]|{\circlearrowright}&\\
					\Bk_{\GD_{2a}}\circ G\ar[rr]%^{\chi_{2a,0}*G}
					\ar[ur]^{\Bk_{\GD_{a}}\circ g}&&G. \\
				}
			\end{equation}
		Here the morphisms $\Bk_{\GD_{2a}}\circ F\to F$ and $\Bk_{\GD_{2a}}\circ G\to G$ are induced by canonical morphism $\chi_{2a, 0}^\GD:\Bk_{\GD_{2a}}\to\Bk_{\GD_0}$. 
		\end{remark}	
			
		%tilde convolution	
		Let $M$ be a smooth manifold. In \cite{GS14},  the convolution on $M\times\Vbb$ is similarly defined. 
		\begin{definition}
		\label{tilde convolution}
			Let $\widetilde{q}_1, \widetilde{q}_2, \widetilde{s}:M\times\Vbb\times\Vbb\to M\times\Vbb$ be as follows: 
			\begin{equation}
				\widetilde q_1(x, v_1, v_2)=(x, v_1),\ \widetilde q_2(x, v_1, v_2)=(x, v_2),\ \widetilde s(x, v_1, v_2)=(x, v_1+v_2). 
			\end{equation}
			One defines the tilde convolution bifunctor by setting $F\tcv G:=R\widetilde s_!(\widetilde q_1^{\ -1}F\otimes \widetilde q_2^{\ -1}G)$, where $F, G\in D^b(M\times \Vbb)$. 	
		\end{definition}
		%The following proposition is proved in (\cite{KS18}, \S 3.1). 

		For $a\in\Rbb$, one defines
		\begin{equation}
			L_a=
			\left\{
			\begin{array}{cc}
				\Bk_{M\times B_a}&{\rm if}\ a\geq0, \\
				\Bk_{M\times \inte B_{-a}}[\dim \Vbb]&{\rm if}\ a<0
			\end{array}
			\right. 
		\end{equation}
		in $D^b(M\times\Vbb)$. 

		\begin{proposition}
		\label{property of La}
			Let $a, b\in\Rbb$ and $F\in D^b(M\times \Vbb)$. There are functorial isomorphisms
			\begin{equation}
				L_a\tcv (L_b\tcv F)\simeq L_{a+b}\tcv F,\quad L_0\tcv F\simeq F. 
			\end{equation}
		\end{proposition}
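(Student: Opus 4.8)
The plan is to reduce both isomorphisms to their already-known counterparts on $\Vbb$ (Proposition \ref{property of Ka}) by treating $M$ as an inert parameter. Write $p\colon M\times\Vbb\to\Vbb$ for the projection. The first observation I would record is that $L_a\simeq p^{-1}K_a$ for every $a\in\Rbb$: when $a\geq0$ this is $p^{-1}(\Bk_{B_a})\simeq\Bk_{p^{-1}(B_a)}=\Bk_{M\times B_a}$ by Proposition \ref{cut-off_pullback}, and when $a<0$ it follows since $p^{-1}$ is exact and commutes with the shift. So everything comes down to how $p^{-1}$ interacts with the convolutions $*$ and $\tcv$.

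First I would establish the compatibility $p^{-1}G\tcv p^{-1}G'\simeq p^{-1}(G*G')$ for $G,G'\in D^b(\Vbb)$. This rests on the Cartesian square
\[
\xymatrix{
M\times\Vbb\times\Vbb\ar[r]^-{\rho}\ar[d]_{\widetilde s}&\Vbb\times\Vbb\ar[d]^{s}\\
M\times\Vbb\ar[r]_-{p}\ar@{}[ur]|{\square}&\Vbb,
}
\]
where $\rho(x,v_1,v_2)=(v_1,v_2)$; the key identities are $p\circ\widetilde q_i=q_i\circ\rho$, so that $\widetilde q_i^{\,-1}\circ p^{-1}=\rho^{-1}\circ q_i^{-1}$. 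Merging the two pullbacks with Proposition \ref{tensor_pullback} and commuting $R\widetilde s_!$ past $\rho^{-1}$ by proper base change (Proposition \ref{cartesian_commutative_derived.ver}),
\begin{align*}
p^{-1}G\tcv p^{-1}G'&=R\widetilde s_!\big(\rho^{-1}q_1^{-1}G\otimes\rho^{-1}q_2^{-1}G'\big)\\
&\simeq R\widetilde s_!\rho^{-1}\big(q_1^{-1}G\otimes q_2^{-1}G'\big)\simeq p^{-1}Rs_!\big(q_1^{-1}G\otimes q_2^{-1}G'\big)=p^{-1}(G*G').
\end{align*}
Next I would note that $\tcv$ is associative: this is proved by the same diagram chase that yields Proposition \ref{associativity of tilde composition}, rewriting both $(F\tcv G)\tcv H$ and $F\tcv(G\tcv H)$ as $R(\cdot)_!$ of a triple tensor product on $M\times\Vbb\times\Vbb\times\Vbb$ via Cartesian squares and the projection formula (Propositions \ref{cartesian_commutative_derived.ver}, \ref{tensor_pullback}, \ref{projection_formula_derived.ver}). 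Combining these two facts with Proposition \ref{property of Ka} then gives, for all $a,b\in\Rbb$ and $F\in D^b(M\times\Vbb)$,
\begin{align*}
L_a\tcv(L_b\tcv F)&\simeq(L_a\tcv L_b)\tcv F\simeq\big(p^{-1}K_a\tcv p^{-1}K_b\big)\tcv F\\
&\simeq p^{-1}(K_a*K_b)\tcv F\simeq p^{-1}K_{a+b}\tcv F=L_{a+b}\tcv F,
\end{align*}
and each isomorphism is functorial in $F$.

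Finally I would treat $L_0\tcv F\simeq F$ by hand, since $F$ itself is not a pullback along $p$. Here $L_0=p^{-1}K_0=\Bk_{M\times\{0\}}$ and $\widetilde q_1^{\,-1}(M\times\{0\})=M\times\{0\}\times\Vbb$, so Propositions \ref{cut-off_tensor} and \ref{i!} give $\widetilde q_1^{\,-1}L_0\otimes\widetilde q_2^{\,-1}F\simeq(\widetilde q_2^{\,-1}F)_{M\times\{0\}\times\Vbb}\simeq j_!\,j^{-1}\widetilde q_2^{\,-1}F$ for the closed embedding $j\colon M\times\{0\}\times\Vbb\hookrightarrow M\times\Vbb\times\Vbb$. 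Since $\widetilde s\circ j=\widetilde q_2\circ j$ is the canonical diffeomorphism $\varphi\colon M\times\{0\}\times\Vbb\xrightarrow{\ \sim\ }M\times\Vbb$, it follows that $L_0\tcv F=R\widetilde s_!\,j_!\,j^{-1}\widetilde q_2^{\,-1}F\simeq\varphi_!\varphi^{-1}F\simeq F$.

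There is no serious obstacle along this route; the content is to set up the two auxiliary facts (the $p^{-1}$–compatibility and the associativity of $\tcv$) carefully, and the one thing genuinely worth verifying is that the square displayed above is Cartesian — i.e.\ that $M\times\Vbb\times\Vbb\to(\Vbb\times\Vbb)\times_{\Vbb}(M\times\Vbb)$, $(x,v_1,v_2)\mapsto\big((v_1,v_2),(x,v_1+v_2)\big)$, is an isomorphism — so that Proposition \ref{cartesian_commutative_derived.ver} applies. I would choose this argument precisely because it sidesteps the awkward sign cases in the definition of $L_a$ (open balls together with a shift by $\dim\Vbb$): a direct stalk computation would instead have to analyse the compactly supported cohomology of intersections of open and closed balls case by case, whereas here those cases are inherited wholesale from Proposition \ref{property of Ka}.
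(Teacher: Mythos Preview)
The paper does not actually supply a proof of this proposition: it is stated as the evident analogue of Proposition~\ref{property of Ka} (itself quoted from \cite{KS18}) and then used without further justification. Your argument is correct and fills this gap cleanly.

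A few minor remarks. Your reduction via $L_a\simeq p^{-1}K_a$ together with the base-change identity $p^{-1}G\tcv p^{-1}G'\simeq p^{-1}(G*G')$ is the natural way to inherit the result from Proposition~\ref{property of Ka}, and the Cartesian square you wrote down is indeed Cartesian. Note that the associativity of $\tcv$ you invoke is not literally Proposition~\ref{associativity of tilde composition} (that one concerns $\tcp$), but as you say the same projection-formula/base-change pattern applies; it would be worth stating this as a short separate lemma rather than leaving it as a parenthetical. Finally, in step~4 you use $K_a*K_b\simeq K_{a+b}$, which is not the literal statement of Proposition~\ref{property of Ka} but follows immediately from it by taking $F=K_0$ there; you might make that one-line deduction explicit.
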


		For $a\geq b$ and $F\in D^b(M\times \Vbb)$, there are canonical morphisms
		\begin{align*}
		%\label{canonical morphism of K_a}
			\chi_{a, b}^L&:L_a\to L_b, \\
			\chi_{a, b}^L\tcv F&:L_a\tcv F\to L_b\tcv F. 
		\end{align*}
		In particular, one has $\chi_{a, 0}^L\tcv F:L_a\tcv F\to F$ when $a\geq0$. 
		\begin{definition} 
		\label{a-interleaved}
			\begin{enumerate}[(i)]
				\item Let $F, G\in D^b(M\times \Vbb)$ and let $a\geq0$. The sheaves $F, G$ are {\bf $a$-interleaved} if there are two morphisms $f:L_a\tcv F\to G$ and $g:L_a\tcv G\to F$ such that the diagrams
				\begin{equation}
				\label{a-int-dia}
					\xymatrix{
						L_{2a}\tcv F\ar[rr]^{\chi_{2a,0}^L\tcv F}\ar[dr]_{L_{a}\tcv f}&&F\\
						&L_a\tcv G, \ar[ur]_{g}\ar@{}[u]|{\circlearrowright}&
					}\quad 
					\xymatrix{
						&L_a\tcv F\ar[dr]^{f}\ar@{}[d]|{\circlearrowright}&\\
						L_{2a}\tcv G\ar[rr]_{\chi_{2a,0}^L\tcv G}\ar[ur]^{L_{a}\tcv g}&&G\\
					}
				\end{equation}
				are commutative. The pair of morphisms $(f, g)$ is called an {\bf $a$-interleaving}. 
				\item One sets 
				\begin{equation}
				%\label{interleaving distance}
					 d_I(F, G):=\inf(\{a\geq0; F\ {\rm and}\ G\ {\rm are}\ a{\text -}{\rm interleaved}\}\cup\{\infty\}). 
				\end{equation}
				It is called the {\bf interleaving distance} on $D^b(M\times\Vbb)$. 
			\end{enumerate}
		\end{definition}
		%Indeed, the convolution distance is a pseudo-extended distance. 
		
		The tilde convolution functor $L_a\tcv (-)$ can be represented  by a tilde composition functor. 
		\begin{proposition}
		\label{conv and comp on MV}
			For $a\geq0$, one sets $Z_a=\{(x, v_1, v_2)\in M\times \Vbb\times\Vbb;\|v_1-v_2\|\leq a\}$. Then there is a canonical  isomorphism of functors
			\begin{equation}
				\Bk_{Z_a}\tcp (-)\simeq L_a\tcv (-). 
			\end{equation}
		\end{proposition}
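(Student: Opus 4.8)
The plan is to mimic the proof of Proposition~\ref{conv and comp on V}, keeping the manifold $M$ as a spectator factor. First I would unwind Definition~\ref{tilde composition} in the case $X_1=X_2=\Vbb$, $X_3=\{\pt\}$ that is implicit in the notation $\Bk_{Z_a}\tcp(-)$: there $\widetilde p_{12}$ is the identity, and $\widetilde p_{13}$, $\widetilde p_{23}$ are precisely the maps $\widetilde q_1,\widetilde q_2:M\times\Vbb\times\Vbb\to M\times\Vbb$ of Definition~\ref{tilde convolution}, so that
\[
  \Bk_{Z_a}\tcp F \;=\; R\widetilde q_{1!}\bigl(\Bk_{Z_a}\otimes\widetilde q_2^{\,-1}F\bigr),\qquad F\in D^b(M\times\Vbb),
\]
while by definition $L_a\tcv F=R\widetilde s_!\bigl(\widetilde q_1^{\,-1}L_a\otimes\widetilde q_2^{\,-1}F\bigr)$ with $L_a=\Bk_{M\times B_a}$.

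Next I would introduce the fibrewise change of coordinates
\[
  \phi:M\times\Vbb\times\Vbb\xrightarrow{\ \sim\ }M\times\Vbb\times\Vbb,\qquad \phi(x,v_1,v_2)=(x,\,v_1+v_2,\,v_2),
\]
a diffeomorphism whose inverse is $(x,w_1,w_2)\mapsto(x,w_1-w_2,w_2)$. It intertwines the two sides via three elementary checks: (a) $\widetilde q_1\circ\phi=\widetilde s$; (b) $\widetilde q_2\circ\phi=\widetilde q_2$, hence $\phi^{-1}\widetilde q_2^{\,-1}F\simeq\widetilde q_2^{\,-1}F$ canonically; and (c) $\phi^{-1}(Z_a)=M\times B_a\times\Vbb=\widetilde q_1^{\,-1}(M\times B_a)$, so that $\phi^{-1}\Bk_{Z_a}\simeq\Bk_{\widetilde q_1^{\,-1}(M\times B_a)}\simeq\widetilde q_1^{\,-1}L_a$ by Proposition~\ref{cut-off_pullback} (here $a\geq0$ enters, so that $L_a$ is a constant sheaf on a closed subset rather than a shifted open one).

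Then the isomorphism is obtained by a short chain of canonical isomorphisms, natural in $F$:
\begin{align*}
  L_a\tcv F
  &= R\widetilde s_!\bigl(\widetilde q_1^{\,-1}L_a\otimes\widetilde q_2^{\,-1}F\bigr)\\
  &\simeq R\widetilde s_!\bigl(\phi^{-1}\Bk_{Z_a}\otimes\phi^{-1}\widetilde q_2^{\,-1}F\bigr)\\
  &\simeq R\widetilde s_!\,\phi^{-1}\bigl(\Bk_{Z_a}\otimes\widetilde q_2^{\,-1}F\bigr)\\
  &\simeq R\widetilde q_{1!}\,R\phi_!\,\phi^{-1}\bigl(\Bk_{Z_a}\otimes\widetilde q_2^{\,-1}F\bigr)\\
  &\simeq R\widetilde q_{1!}\bigl(\Bk_{Z_a}\otimes\widetilde q_2^{\,-1}F\bigr)\;=\;\Bk_{Z_a}\tcp F,
\end{align*}
where the second step uses (b) and (c), the third is Proposition~\ref{tensor_pullback}, the fourth uses (a) together with $R(\widetilde q_1\circ\phi)_!\simeq R\widetilde q_{1!}\circ R\phi_!$, and the last uses that $\phi$ is a diffeomorphism, so $R\phi_!\circ\phi^{-1}\simeq\mathrm{id}$. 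Assembling these over all $F$ yields the asserted isomorphism of functors.

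I do not expect a genuine obstacle: the whole content is the change-of-variables identification, i.e. the $M$-parametrized version of the argument underlying Proposition~\ref{conv and comp on V}. The points that want a little care are the correct unwinding of Definition~\ref{tilde composition} with $X_3=\{\pt\}$ (so that the ambient maps really coincide with $\widetilde q_1,\widetilde q_2$), and verifying that the identifications $\phi^{-1}\Bk_{Z_a}\simeq\widetilde q_1^{\,-1}L_a$ and $\phi^{-1}\widetilde q_2^{\,-1}F\simeq\widetilde q_2^{\,-1}F$ are the canonical ones, which is exactly what makes the resulting isomorphism functorial in $F$.
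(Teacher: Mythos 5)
Your proof is correct and is essentially the paper's proof: the paper uses the change of variables $\widetilde u(x,v_1,v_2)=(x,v_1-v_2,v_2)$, which is exactly the inverse $\phi^{-1}$ of your map, and then the identical chain of identities ($\widetilde s\circ\widetilde u=\widetilde q_1$, $\widetilde q_2\circ\widetilde u=\widetilde q_2$, $\widetilde u^{-1}(M\times B_a\times\Vbb)=Z_a$, tensor commutes with pullback, $R\widetilde u_!\widetilde u^{-1}\simeq\id$). Aside from reading the homeomorphism in the opposite direction, the argument is the same.
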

		\begin{proof}
			Consider a morphism
			\begin{equation}
				\widetilde u:M\times\Vbb\times\Vbb\to M\times\Vbb\times\Vbb, (x, v_1, v_2)\mapsto (x, v_1-v_2, v_2). 
			\end{equation}
			This satisfies that 
			\begin{equation}
				\widetilde s\circ \widetilde u=\widetilde q_{1},\ \widetilde q_2\circ \widetilde u=\widetilde q_2, 
			\end{equation}
			\begin{equation}
				\widetilde u^{\ -1}\widetilde q_1^{\ -1}(M\times B_a)=\widetilde u^{\ -1}(M\times B_a\times \Vbb)=Z_a. 
			\end{equation}
			Moreover, there is a canonical isomorphism of functors $\widetilde u_!\widetilde u^{-1}\simeq\id$ since $\widetilde u$ is a homeomorphism. For $F\in D^b(M\times\Vbb)$, we have
			\begin{align*}
				L_a\tcv F&= R\widetilde s_!(\widetilde q_1^{\ -1}L_a\otimes \widetilde q_2^{\ -1}F)\\
				&\simeq R\widetilde s_!R\widetilde u_!\widetilde u^{\ -1}(\widetilde q_1^{\ -1}L_a\otimes \widetilde q_2^{\ -1}F)\\
				&\simeq R\widetilde s_!R\widetilde u_!(\widetilde u^{\ -1}\widetilde q_1^{\ -1}L_a\otimes \widetilde u^{\ -1}\widetilde q_2^{\ -1}F)\\
				&\simeq R\widetilde q_{1!}(\Bk_{Z_a}\otimes \widetilde q_2^{\ -1}F)\\
				&= \Bk_{Z_a}\tcp F. 
			\end{align*} 
		\end{proof}
		We can obtain the proof of Proposition \ref{conv and comp on V} when $M=\pt$. 
		\begin{remark}
		We can write the following diagrams instead of \eqnref{a-int-dia} as well as Remark \ref{rewrite dia1}:
			\begin{equation}
			\label{tcp-a-int}
				\xymatrix{
					\Bk_{Z_{2a}}\tcp F\ar[rr]%^{\chi_{2a,0}*F}
					\ar[dr]_{\Bk_{Z_{a}}\tcp f}&&F\\
					&\Bk_{Z_{a}}\tcp G, \ar[ur]_{g}\ar@{}[u]|{\circlearrowright}&
				}\quad 
				\xymatrix{
					&\Bk_{Z_{a}}\tcp F\ar[dr]^{f}\ar@{}[d]|{\circlearrowright}&\\
					\Bk_{Z_{2a}}\tcp G\ar[rr]%^{\chi_{2a,0}*G}
					\ar[ur]^{\Bk_{Z_{a}}\tcp g}&&G. \\
				}
			\end{equation}
		Here the morphisms $\Bk_{Z_{2a}}\tcp F\to F$ and $\Bk_{Z_{2a}}\tcp G\to G$ are induced by canonical morphism $\chi_{2a, 0}^Z:\Bk_{Z_{2a}}\to\Bk_{Z_0}$. 
		\end{remark}
		
	\subsection{Distance on localized category}
	\label{dlc}\phantom{a}
		The tilde convolution bifunctor is extended to the localized category $D^b(M\times\Vbb, T^{*, \Gg}(M\times\Vbb))$. Therefore, the interleaving distance $d_I$ induces a pseudo-extended distance on $D^b(M\times\Vbb, T^{*, \Gg}(M\times\Vbb))$. In particular, the convolution distance $d_C$ induces a pseudo-extended distance on the localized category $D^b(\Vbb, \dot T^*\Vbb)$. 
		\begin{proposition}\cite[Corollary 4.14.]{GS14}
		\label{microsupport and convolution}
			Let $F, G\in D^b(M\times\Vbb)$. Assume that there exist closed cones $A, B\subset\Vbb^*$ such that $\SS(F)\subset T^*M\times \Vbb\times A$ and $\SS(G)\subset T^*M\times \Vbb\times B$. Then 
			\begin{equation}
				\SS(F\tcv G)\subset T^*M\times \Vbb\times (A\cap B). 
			\end{equation}
		\end{proposition}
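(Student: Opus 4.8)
The plan is to read off the conclusion from the standard functorial estimates for the microsupport under the three operations that define $\tcv$: inverse image along a submersion, derived tensor product, and (proper) direct image (\cite[\S5.4]{KS90}). Using the linear structure I would identify $T^*(M\times\Vbb)\simeq T^*M\times\Vbb\times\Vbb^*$, a covector being written $(x,v;\xi,\mu)$, and similarly $T^*(M\times\Vbb\times\Vbb)\simeq T^*M\times(\Vbb\times\Vbb^*)^2$, a covector being written $(x,v_1,v_2;\xi,\mu_1,\mu_2)$; the hypotheses then say that the $\Vbb^*$-component of $\SS(F)$ lies in $A$ and that of $\SS(G)$ in $B$.

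The core of the argument is the microsupport bookkeeping. As $\widetilde q_1,\widetilde q_2$ are submersions, the inverse-image estimate is an equality, and since the codifferential of $\widetilde q_1$ sends $(\xi,\mu_1)$ to $(\xi,\mu_1,0)$ one gets $\SS(\widetilde q_1^{\,-1}F)\subset\{\mu_1\in A,\ \mu_2=0\}$ and, symmetrically, $\SS(\widetilde q_2^{\,-1}G)\subset\{\mu_1=0,\ \mu_2\in B\}$. Because $\Bk$ is a field, $\otimes$ is exact and the microsupport is subadditive under it, so $\SS(\widetilde q_1^{\,-1}F\otimes\widetilde q_2^{\,-1}G)$ is contained in the closure of the fibrewise sum of the two sets above; as those sets occupy complementary factors of the $\Vbb^*\times\Vbb^*$-direction, that sum is the already closed set $\{(x,v_1,v_2;\xi,\mu_1,\mu_2)\colon\mu_1\in A,\ \mu_2\in B\}$. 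Finally the codifferential of $\widetilde s\colon(x,v_1,v_2)\mapsto(x,v_1+v_2)$ sends $(\xi,\mu)$ to $(\xi,\mu,\mu)$, so the direct-image estimate for $R\widetilde s_!$ gives
\[
	\SS(F\tcv G)\subset\bigl\{(x,v;\xi,\mu)\colon (x,v_1,v_2;\xi,\mu,\mu)\in\SS(\widetilde q_1^{\,-1}F\otimes\widetilde q_2^{\,-1}G)\ \text{for some}\ v_1+v_2=v\bigr\};
\]
the two equal last components are then forced to lie in $A$ and in $B$, i.e. $\mu\in A\cap B$, which is the asserted inclusion.

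The step I expect to be the real obstacle is the last one: $\widetilde s$ is not proper on $\supp(\widetilde q_1^{\,-1}F\otimes\widetilde q_2^{\,-1}G)$ in general, its fibres being whole copies of $\Vbb$, so the $R\widetilde s_!$ microsupport estimate does not apply to $\widetilde s$ verbatim. I would circumvent this in two moves. First, under the extra assumption that one of $F,G$ --- say $G$ --- has support proper over $M$, the map $\widetilde s$ is proper on $\widetilde q_1^{\,-1}(\supp F)\cap\widetilde q_2^{\,-1}(\supp G)$, so the computation above applies unchanged and gives $\SS(F\tcv G)\subset T^*M\times\Vbb\times A$ (taking $B=\Vbb^*$ there); this already covers the case $G=L_a$ relevant to Definition \ref{a-interleaved}. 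Second, for arbitrary $G$ I would write $G\simeq\varinjlim_n G_{M\times\inte B_n}$ (using Proposition \ref{cut-off_tensor} and the triangles of Proposition \ref{distinguish triangle}), each $G_{M\times\inte B_n}$ having support proper over $M$; since $F\tcv(-)$ commutes with filtered colimits, $F\tcv G\simeq\varinjlim_n(F\tcv G_{M\times\inte B_n})$, and the microsupport of a filtered colimit is contained in the closure of the union of the microsupports, so $\SS(F\tcv G)\subset T^*M\times\Vbb\times A$. Running the same exhaustion on $F$ yields $\SS(F\tcv G)\subset T^*M\times\Vbb\times B$, and intersecting the two gives the claim. (This is the content of \cite[Corollary 4.14]{GS14}.)
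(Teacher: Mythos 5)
The paper itself does not prove this proposition; it simply cites \cite[Corollary 4.14]{GS14} and moves on. So there is no ``paper's own proof'' to compare against, and I will instead assess your reconstruction on its own terms.

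Your bookkeeping for the two inverse images (submersions, so the estimate is sharp) and for the codifferential of $\widetilde s$ is correct, and your conclusion for the tensor step is also right: strictly speaking \cite[Cor.~6.4.5]{KS90} gives $\SS(\widetilde q_1^{-1}F\otimes\widetilde q_2^{-1}G)\subset\SS(\widetilde q_1^{-1}F)\,\widehat{+}\,\SS(\widetilde q_2^{-1}G)$ rather than the naive fibrewise sum, but because the two factors sit in $A\times\{0\}$ and $\{0\}\times B$ the refined sum still lands in the closed set $T^*M\times\Vbb^2\times A\times B$, so the conclusion survives. You also correctly identify the real obstruction, namely that $\widetilde s$ is not proper on the support of the tensor, so \cite[Prop.~5.4.4]{KS90} does not apply to $R\widetilde s_!$ directly; your first move (cutting $G$ down to something with $\Vbb$-support proper over $M$, so that $\widetilde s$ becomes proper on the relevant set) is correct, and you are right that truncation destroys the $B$-constraint and that one therefore has to run the argument twice and intersect.

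The genuine gap is the second move. You invoke, without proof or reference, the statement that ``the microsupport of a filtered colimit is contained in the closure of the union of the microsupports.'' This is not a blackbox fact one can simply quote: first, the colimit $\varinjlim_n G_{M\times\inte B_n}$ does not exist in $D^b(M\times\Vbb)$ --- it must be taken as a homotopy colimit in the unbounded derived category, which is precisely the technical setting \cite{GS14} sets up for this purpose --- and second, the microsupport estimate for such colimits is a real lemma whose proof requires showing that the vanishing $R\GG_{\{\varphi\geq\varphi(x_1)\}}(-)_{x_1}\simeq 0$ passes to the limit, which is not automatic since $R\GG(V;-)$ over an open $V$ does not in general commute with filtered homotopy colimits. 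In other words, the step you treat as a small supporting fact is roughly where all the remaining difficulty of the proposition lives. To make the argument complete you would need to either prove this colimit lemma (including the passage to the unbounded category and back), or cite the exact place in \cite{GS14} where it is established, rather than citing \cite[Cor.~4.14]{GS14} --- which is the statement you are trying to prove.

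Two smaller remarks: Propositions \ref{cut-off_tensor} and \ref{distinguish triangle} do not actually produce the directed system $(G_{M\times\inte B_n})_n$ with its transition maps; those come from the canonical morphisms $F_W\to F_{W'}$ for open $W\subset W'$, which is a different (if elementary) construction. And when you say that in the proper case one gets $\SS(F\tcv G)\subset T^*M\times\Vbb\times A$ ``taking $B=\Vbb^*$ there,'' this is fine for the subsequent colimit argument, but note that in the genuinely proper case you already get the full conclusion $T^*M\times\Vbb\times(A\cap B)$ directly; it would be cleaner to phrase the first move as giving the full estimate under a properness hypothesis, and then to explain that the truncation used in the second move forces you to drop one of the two constraints.
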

		This proposition implies that the convolution$(-)\tcv(-):D^b(M\times\Vbb)\times D^b(M\times\Vbb)\to D^b(M\times\Vbb)$ induces 
		\begin{equation}
			(-)\tcv(-):D^b(M\times\Vbb, T^{*, \Gg}(M\times\Vbb))\times D^b(M\times\Vbb, T^{*, \Gg}(M\times\Vbb))\to D^b(M\times\Vbb, T^{*, \Gg}(M\times\Vbb)). 
		\end{equation}
		When $M=\pt$ and $\Gg=\Vbb$, especially, we can obtain
		\begin{equation}
			(-)*(-):D^b(\Vbb, \dot T^*\Vbb)\times D^b(\Vbb, \dot T^*\Vbb)\to D^b(\Vbb, \dot T^*\Vbb). 
		\end{equation}
		\begin{definition}
		\label{dLC and dLI}
			\begin{enumerate}[(i)] 
				\item Let $F, G\in D^b(\Vbb, \dot T^*\Vbb)$ and let $a\geq0$. The sheaves $F, G$ are {\bf locally $a$-interleaved} if there are two morphisms $f:K_a*F\to G$ and $g:K_a*G\to F$ such that the diagrams \eqnref{a-iso-dia} are commutative. The pair of morphisms $(f, g)$ is called a {\bf local $a$-interleaving}. 
				\item One sets 
				\begin{equation}
				%\label{convolution distance}
					 d_{LC}(F, G):=\inf(\{a\geq0; F\ {\rm and}\ G\ {\rm are\ locally}\ a{\text -}{\rm interleaved}\}\cup\{\infty\}). 
				\end{equation}
				It is called the {\bf localized convolution distance} on $D^b(\Vbb, \dot T^*\Vbb)$. 
				\item Let $F, G\in D^b(M\times\Vbb, T^{*, \Gg}(M\times\Vbb))$ and let $a\geq0$. The sheaves $F, G$ are {\bf locally $a$-interleaved} if thre are two morphisms $f:L_a\tcv F\to G$ and $g:L_a\tcv G\to F$ such that the diagrams \eqnref{a-int-dia} are commutative. The pair of morphisms $(f, g)$ is called a {\bf local $a$-interleaving}. 
				\item One sets 
				\begin{equation}
				%\label{interleaving distance}
					 d_{LI}(F, G):=\inf(\{a\geq0; F\ {\rm and}\ G\ {\rm are\ locally}\ a{\text -}{\rm interleaved}\}\cup\{\infty\}). 
				\end{equation}
				It is called the {\bf localized interleaving distance} on $D^b(M\times\Vbb, T^{*, \Gg}(M\times\Vbb))$. 
			\end{enumerate}
		\end{definition}
		\begin{proposition}
		\label{inequality of distance}
			\begin{enumerate}[(i)]
				\item Let $F, G\in D^b(\Vbb)$. Then
				\begin{equation}
					d_{LC}(F, G)\leq d_C(F, G). 
				\end{equation}
				\item Let $F, G\in D^b(M\times \Vbb)$. Then
				\begin{equation}
					d_{LI}(F, G)\leq d_I(F, G). 
				\end{equation}
			\end{enumerate}
		\end{proposition}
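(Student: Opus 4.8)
The plan is to observe that an $a$-interleaving in the unlocalized category descends to a local $a$-interleaving, so that the infimum defining $d_{LC}$ (resp.\ $d_{LI}$) is taken over a set which \emph{contains} the one defining $d_C$ (resp.\ $d_I$); the inequality is then immediate from the elementary fact that enlarging the set over which an infimum is taken can only decrease its value. Write $Q\colon D^b(\Vbb)\to D^b(\Vbb,\dot T^*\Vbb)$ for the localization functor. By Proposition \ref{microsupport and convolution} (applied with $M=\pt$ and $\Gg=\Vbb$, so that $T^{*,\Vbb}(\Vbb)=\dot T^*\Vbb$), the convolution bifunctor is compatible with $Q$ in each variable, so for every $a\in\Rbb$ there is a natural isomorphism $Q(K_a*F)\simeq K_a*Q(F)$; moreover, $Q$ being a functor, it carries the canonical morphisms $\chi^K_{a,b}*(-)$ on $D^b(\Vbb)$ to the canonical morphisms used in the diagrams of Definition \ref{dLC and dLI}.

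For (i), suppose $F,G\in D^b(\Vbb)$ are $a$-interleaved for some $a\geq0$, with an $a$-interleaving $(f,g)$ as in Definition \ref{a-isomorphism}, so the two diagrams \eqnref{a-iso-dia} commute in $D^b(\Vbb)$. Applying $Q$ and using the identifications above, we obtain morphisms $Q(f)\colon K_a*Q(F)\to Q(G)$ and $Q(g)\colon K_a*Q(G)\to Q(F)$ in $D^b(\Vbb,\dot T^*\Vbb)$, and the images under $Q$ of the diagrams \eqnref{a-iso-dia} remain commutative. Hence $Q(F)$ and $Q(G)$ are locally $a$-interleaved, and therefore
\[
\{a\geq0;\ F\text{ and }G\text{ are }a\text{-interleaved}\}\subseteq\{a\geq0;\ Q(F)\text{ and }Q(G)\text{ are locally }a\text{-interleaved}\}.
\]
Adjoining $\{\infty\}$ to both sides and passing to infima gives $d_{LC}(F,G)\leq d_C(F,G)$.

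Part (ii) follows by the same argument with $\Vbb$ replaced by $M\times\Vbb$, the convolution $*$ by $\tcv$, the kernels $K_a$ by $L_a$, and $Q$ the localization functor $D^b(M\times\Vbb)\to D^b(M\times\Vbb,T^{*,\Gg}(M\times\Vbb))$; here Proposition \ref{microsupport and convolution} is used in its stated form to ensure that $\tcv$ descends to the localized category and that $Q(L_a\tcv F)\simeq L_a\tcv Q(F)$ compatibly with the canonical morphisms in \eqnref{a-int-dia}. The whole argument is formal; the only point that needs care is precisely this compatibility of the convolution with the localization functor, and since that is exactly what Proposition \ref{microsupport and convolution} provides, I do not anticipate a genuine obstacle.
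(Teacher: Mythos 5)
Your proof is correct and takes essentially the same route as the paper: apply the localization functor to the commuting interleaving diagrams, observe they remain commutative in the localized category (with the convolution descending compatibly by Proposition~\ref{microsupport and convolution}), and conclude by comparing the two infima. The paper states this in one line; your writeup merely spells out the same argument more explicitly.
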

		\begin{proof}
		(i) Assume that $F$ and $G$ are $a$-interleaved. Applying the localization functor, we get the diagram \eqnref{a-iso-dia} in $D^b(\Vbb, \dot T^*\Vbb)$. 
		This proves that $d_{LC}(F, G)\leq d_C(F, G)$. \\
		(ii) It can be proved in the similar way. 
		\end{proof}
		
\section{Isometry theorem}
\label{IT}
\phantom{a}
	This section presents the proof of the isometry theorem by the Radon transform. We can define the localized convolution (resp.\,interleaving) distance on $D^b(\Rbb^n, \dot{T}^*\Rbb^n)$ (resp.\,$D^b(S^{n-1}\times \Rbb, T^{*, +}(S^{n-1}\times\Rbb))$) by using Definition \ref{dLC and dLI}. At the beginning, the convolution (resp.\,interleaving) distance is defined by convolution (resp.\,tilde convolution) functor $K_a*(-)$ (resp.\,$L_a\tcv (-)$). These functors can be represented  by composition functors: $\Bk_{\GD_a}\circ(-)$ and $\Bk_{Z_a}\tcp(-)$. In order to prove the isometry theorem, we prove the commutativity between these functors and the Radon transform. 
	%$\Vbb=\Rbb^n, M\times\Vbb=S^{n-1}\times\Rbb$
	\subsection{Radon isometry theorem}
	\label{radon isometry}%\phantom{a}
	%compatibility between Radon transform and kernel
		\begin{lemma}
		\label{commutative between radon and cmp}
			For any $a\geq0$, there are canonical isomorphisms
			\begin{equation}
				\Bk_A\circ \Bk_{\GD_a}\simeq \Bk_{A_a}\simeq \Bk_{Z_a}\tcp \Bk_A\quad {\rm in}\quad D^b(S^{n-1}\times\Rbb\times\Rbb^n), 
			\end{equation} 
			where $A_a=\{(y, t, x)\in S^{n-1}\times\Rbb\times\Rbb^n; x\cdot y\leq t+a\}$. 
		\end{lemma}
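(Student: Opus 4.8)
The plan is to compute both compositions by hand. After unwinding the definitions with the cut-off formulas, each of $\Bk_A\circ\Bk_{\GD_a}$ and $\Bk_{Z_a}\tcp\Bk_A$ becomes a proper pushforward of a constant sheaf along a projection whose fibres are compact convex sets, and this pushforward is computed stalkwise to be $\Bk_{A_a}$. Concretely, write $p_{12},p_{23},p_{13}$ for the projections from $(S^{n-1}\times\Rbb)\times\Rbb^n\times\Rbb^n$ onto $X_{12},X_{23},X_{13}$ in the composition $\circ=\circ_{\Rbb^n}$. By Propositions \ref{cut-off_pullback}, \ref{cut-off_tensor} and \ref{cut-off_commutative},
\[
p_{12}^{-1}\Bk_A\otimes p_{23}^{-1}\Bk_{\GD_a}\simeq\Bk_W,\qquad W:=\{(y,t,x,x')\,;\,x\cdot y\le t,\ \|x-x'\|\le a\},
\]
so that, by Proposition \ref{i!}, $\Bk_A\circ\Bk_{\GD_a}\simeq Rf_!\,\Bk_W$, where $f\colon W\to S^{n-1}\times\Rbb\times\Rbb^n$ is the restriction of $p_{13}$ and $\Bk_W$ now denotes the constant sheaf on the space $W$. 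The map $f$ is proper — a point of a fibre of $f$ satisfies $\|x-x'\|\le a$, so the preimage of a compact set is closed and bounded — hence $Rf_!\simeq Rf_*$. The identical computation for the tilde composition, taking $M=S^{n-1}$, $X_1=X_2=\Rbb$, $X_3=\Rbb^n$, and the projections $\widetilde p_{12},\widetilde p_{23},\widetilde p_{13}$ from $S^{n-1}\times\Rbb\times\Rbb\times\Rbb^n$, gives $\Bk_{Z_a}\tcp\Bk_A\simeq Rg_*\,\Bk_{W'}$ with $g:=\widetilde p_{13}|_{W'}$ proper and $W':=\{(y,t_1,t_2,x)\,;\,|t_1-t_2|\le a,\ x\cdot y\le t_2\}$.

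Next I would produce canonical comparison morphisms. The crucial observation is that $A_a$ pulls back to the whole source of $f$ and of $g$: if $x\cdot y\le t$ and $\|x-x'\|\le a$ then $x'\cdot y=x\cdot y+(x'-x)\cdot y\le t+a$, so $f^{-1}(A_a)=W$; and if $|t_1-t_2|\le a$ and $x\cdot y\le t_2$ then $x\cdot y\le t_2\le t_1+a$, so $g^{-1}(A_a)=W'$. Hence $f^{-1}\Bk_{A_a}\simeq\Bk_W$ and $g^{-1}\Bk_{A_a}\simeq\Bk_{W'}$ by Proposition \ref{cut-off_pullback}, and by adjunction the identity morphisms of $\Bk_W$ and $\Bk_{W'}$ correspond to canonical morphisms
\[
\Ga\colon\Bk_{A_a}\longrightarrow\Bk_A\circ\Bk_{\GD_a},\qquad \Gb\colon\Bk_{A_a}\longrightarrow\Bk_{Z_a}\tcp\Bk_A.
\]

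Finally I would check that $\Ga$ and $\Gb$ are isomorphisms, which by Proposition \ref{stalk_iso} together with the exactness of $(-)_x$ (Proposition \ref{stalk_exact}) may be verified on stalks. By properness and base change (Proposition \ref{cartesian_commutative_derived.ver}), the stalk of $\Bk_A\circ\Bk_{\GD_a}$ at a point $(y_0,t_0,x_0')$ is $R\GG$ of the constant sheaf on the fibre
\[
C=\{x\in\Rbb^n\,;\,x\cdot y_0\le t_0,\ \|x-x_0'\|\le a\},
\]
which is empty precisely when $x_0'\cdot y_0>t_0+a$ (the distance from $x_0'$ to the half-space $\{x\cdot y_0\le t_0\}$ then exceeds $a$) and is otherwise a nonempty compact convex, hence contractible, set; so this stalk is $0$ off $A_a$ and $\Bk$ concentrated in degree $0$ on $A_a$, matching $\Bk_{A_a}$, and one checks directly that $\Ga$ induces the canonical identification there. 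For $\Gb$ the fibre is the compact interval $\{t_2\,;\,|t_1^{0}-t_2|\le a,\ t_2\ge x_0\cdot y_0\}$, and the argument is the same. Therefore $\Ga$ and $\Gb$ are isomorphisms.

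The main obstacle is the bookkeeping in the first step: correctly identifying the tensor product of the two pulled-back constant sheaves with $\Bk_W$ (resp.\ $\Bk_{W'}$) via the cut-off lemmas, and verifying that the relevant projection is proper so that $Rf_!$ may be replaced by $Rf_*$ (which is what makes the stalk computation available). The geometric content of the stalk computation is only the elementary fact that a closed Euclidean ball meets a closed half-space in an empty or a convex set — the same "distance to a hyperplane" input already used for the Radon transform of a ball.
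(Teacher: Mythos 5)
Your proof is correct and follows essentially the same route as the paper: you identify $p_{12}^{-1}\Bk_A\otimes p_{23}^{-1}\Bk_{\GD_a}$ with the constant sheaf on the set $W$ (the paper's $A'_a$), observe $p_{13}$ is proper on it, build the comparison map $\Bk_{A_a}\to\Bk_A\circ\Bk_{\GD_a}$ by adjunction from $f^{-1}\Bk_{A_a}\simeq\Bk_W$ (this is precisely the paper's "Another proof" construction, and equivalent to the unit-of-adjunction chain in its first version), and verify it on stalks via the elementary convexity fact about a ball meeting a half-space. The only cosmetic difference is that you spell out the $\tcp$ side explicitly where the paper says "the same way."
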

		\begin{proof}
			First, we denote by $p_{ij}$ the following projection: 
			\begin{equation}
				\xymatrix{
					&(S^{n-1}\times\Rbb)\times\Rbb^n_{x_2}\times\Rbb^n_{x_3}\ar[dl]_{p_{12}}\ar[d]_{p_{13}}\ar[dr]^{p_{23}}&\\
					(S^{n-1}\times\Rbb)\times\Rbb^n_{x_2}&(S^{n-1}\times\Rbb)\times\Rbb^n_{x_3}&\Rbb^n_{x_2}\times\Rbb^n_{x_3}. 
				}
			\end{equation}
			We set $A^\prime_a:=\{(y, t, x_2, x_3)\in S^{n-1}\times\Rbb\times\Rbb^n\times\Rbb^n; \|x_2-x_3\|\leq a, x_2\cdot y\leq t\}$. Then, we get $p_{12}^{-1}\Bk_A\otimes p_{23}^{-1}\Bk_{\GD_a}\simeq \Bk_{A_a^\prime}$ and $p_{13}^{-1}(A_a)\supset A_a^\prime$. Let $i: A^\prime_a\to S^{n-1}\times\Rbb\times\Rbb^n\times\Rbb^n$ be an inclusion map. Then $p_{13}|_{A^\prime_a}:A^\prime_a\to S^{n-1}\times\Rbb\times\Rbb^n$ is proper. Therefore we obtain the following morphism
			\begin{align*}
				\Bk_A\circ \Bk_{\GD_a}&=Rp_{13!}(p_{12}^{-1}\Bk_A\otimes p_{23}^{-1}\Bk_{\GD_a})\\
				&\simeq Rp_{13!}\Bk_{A^\prime_a}\\
				&\simeq Rp_{13*}\Bk_{A^\prime_a}\\
				&\simeq Rp_{13*}(\Bk_{S^{n-1}\times\Rbb\times\Rbb^n\times\Rbb^n})_{A^\prime_a}\\
				&\simeq Rp_{13*}((\Bk_{S^{n-1}\times\Rbb\times\Rbb^n\times\Rbb^n})_{p_{13}^{-1}(A_a)})_{A^\prime_a}\\
				&\simeq Rp_{13*}Ri_*i^{-1}p_{13}^{-1}\Bk_{A_a}\\
				&\simeq R(p_{13}\circ i)_*(p_{13}\circ i)^{-1}\Bk_{A_a}\\
				&\leftarrow \Bk_{A_a}. 
			\end{align*}
			Moreover $A^\prime_a\cap {p_{13}}^{-1}(y, t, x)=A^\prime_a\cap \{(y, t, x_2, x); x_2\in\Rbb^n\}$ is contractible if $(y, t, x)\in A_a$ and $A^\prime_a\cap {p_{13}}^{-1}(y, t, x)=\emptyset$ if $(y, t, x)\notin A_a$. We can prove that it induces the following isomorphism on stalks at any $(y, t, x)\in S^{n-1}\times\Rbb\times\Rbb^n$: 
			\begin{align*}
				(\Bk_A\circ \Bk_{\GD_a})_{(y, t, x)}&\simeq (Rp_{13*}\Bk_{A^\prime_a})_{(y, t, x)}\\
				&\simeq R\GG(\{(y, t, x_2, x); x_2\in\Rbb^n\};\Bk_{A^\prime_a}|_{\{(y, t, x_2, x); x_2\in\Rbb^n\}})\\
				&\simeq (\Bk_{A_a})_{(y, t, x)}. 
			\end{align*}
		
			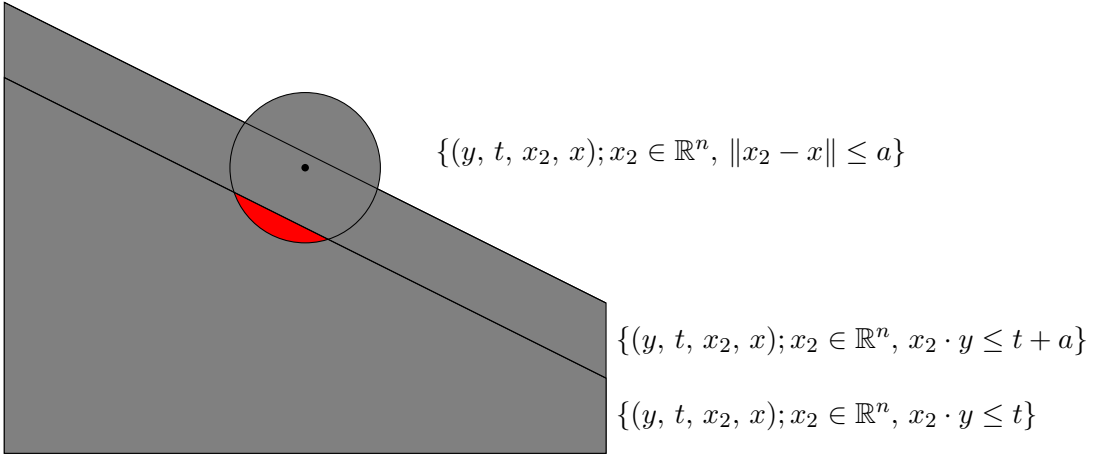
\begin{figure}[htbp]
				\centering
				\begin{tikzpicture}
				\coordinate[label=left:$x$](x)at(0, 1.8);
				\coordinate[label=right:$\{(y{\rm ,\ }t{\rm ,\ }x_2{\rm ,\ }x); x_2\in\Rbb^n{\rm ,\ } x_2\cdot y\leq t\}$](t)at(4, -1.5);
				\coordinate[label=right:$\{(y{\rm ,\ }t{\rm ,\ }x_2{\rm ,\ }x); x_2\in\Rbb^n{\rm ,\ } x_2\cdot y\leq t+a\}$](t+a)at(4, -0.5);
				\coordinate[label=right:$\{(y{\rm ,\ }t{\rm ,\ }x_2{\rm ,\ }x); x_2\in\Rbb^n{\rm ,\ } \|x_2-x\|\leq a\}$](ball)at(1.6, 2);
				\filldraw[fill=gray, opacity=0.1](-4, 4)--(-4,-2)--(4, -2)--(4, 0)--cycle;
				\filldraw[fill=gray, opacity=0.1](-4, 3)--(-4,-2)--(4, -2)--(4, -1)--cycle;
				\fill[gray, opacity=0.1](x) circle (10mm);
				\fill[black, opacity=0.4](x) circle (0.5mm);
				\begin{scope}
					\clip (x) circle (10mm);
					\filldraw[fill=red, opacity=0.4](-4, 3)--(-4,-2)--(4, -2)--(4, -1)--cycle;
				\end{scope}
				\draw(x) circle (10mm);
				\draw (-4, 3)--(4, -1);
				\draw (-4, 4)--(4, 0);
	 			\end{tikzpicture}			
				\caption{It is a figure on $p_{13}^{-1}(y, t, x)\simeq \Rbb^n_{x_2}$. If $(y, t, x)\in A_a$, $A^\prime_a\cap {p_{13}}^{-1}(y, t, x)$ is contractible. }
			\end{figure}
		
			We get the second isomorphism in the same way. 
		
			\hspace{-1.5em}{\bf Another proof. }
			We have 
			\begin{align*}
				\Hom_{D^b(S^{n-1}\times\Rbb\times\Rbb^n)}(\Bk_{A_a}, \Bk_A\circ \Bk_{\GD_a})
				&\simeq \Hom_{D^b(S^{n-1}\times\Rbb\times\Rbb^n)}(\Bk_{A_a}, Rp_{13!}(p_{12}^{-1}\Bk_A\otimes p_{23}^{-1}\Bk_{\GD_a}))\\
				&\simeq \Hom_{D^b(S^{n-1}\times\Rbb\times\Rbb^n\times\Rbb^n)}(p_{13}^{-1}\Bk_{A_a}, p_{12}^{-1}\Bk_A\otimes p_{23}^{-1}\Bk_{\GD_a})\\
				&\simeq \Hom_{D^b(S^{n-1}\times\Rbb\times\Rbb^n\times\Rbb^n)}(p_{13}^{-1}\Bk_{A_a}, \Bk_{A_a^\prime})\\
				&\simeq \Hom_{D^b(S^{n-1}\times\Rbb\times\Rbb^n\times\Rbb^n)}(p_{13}^{-1}\Bk_{A_a}, Ri_*i^{-1}\Bk_{S^{n-1}\times\Rbb\times\Rbb^n\times\Rbb^n})\\
				&\simeq \Hom_{D^b(A_a^\prime)}(i^{-1}p_{13}^{-1}\Bk_{A_a}, i^{-1}\Bk_{S^{n-1}\times\Rbb\times\Rbb^n\times\Rbb^n})\\
				&\simeq \Hom_{D^b(A_a^\prime)}(\Bk_{A_a^\prime}, \Bk_{A_a^\prime}). \\
			\end{align*}
			We can obtain the morphism $\Bk_{A_a}\to \Bk_A\circ\Bk_{\GD_a}$ considering the image of $\id_{\Bk_{A^\prime}}$ in $\RHom_{D^b(S^{n-1}\times\Rbb\times\Rbb^n)}(\Bk_{A_a}, \Bk_A\circ \Bk_{\GD_a})$. The rest of this proof can be done in the same way as the previous one. 
		\end{proof}
		\begin{lemma}
		\label{commutative between radon and chi}
			For $a\geq b\geq0$, there is a canonical isomorphism
			\begin{equation}
				\Bk_A\circ\chi^\GD_{a, b}\simeq \chi^Z_{a, b}\tcp\Bk_A. 
			\end{equation}
			%notationどうすんねん
		\end{lemma}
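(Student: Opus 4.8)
The plan is to factor both sides through the canonical restriction morphism $\chi^A_{a,b}\colon\Bk_{A_a}\to\Bk_{A_b}$ attached to the closed inclusion $A_b\subset A_a$ (valid since $a\geq b$), and then to invoke the isomorphisms of Lemma \ref{commutative between radon and cmp}. The first point to record is that $\chi^\GD_{a,b}$ and $\chi^Z_{a,b}$ are themselves just the canonical morphisms $\Bk_{\GD_a}\to(\Bk_{\GD_a})_{\GD_b}=\Bk_{\GD_b}$ and $\Bk_{Z_a}\to(\Bk_{Z_a})_{Z_b}=\Bk_{Z_b}$ coming from the closed inclusions $\GD_b\subset\GD_a$ and $Z_b\subset Z_a$. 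So what has to be proved is that, under the canonical isomorphisms $\Bk_A\circ\Bk_{\GD_c}\simeq\Bk_{A_c}\simeq\Bk_{Z_c}\tcp\Bk_A$ of Lemma \ref{commutative between radon and cmp} (for $c=a$ and $c=b$), the morphism $\Bk_A\circ\chi^\GD_{a,b}$ corresponds to $\chi^A_{a,b}$ and likewise $\chi^Z_{a,b}\tcp\Bk_A$ corresponds to $\chi^A_{a,b}$; pasting the two commutative squares thus obtained along their common edge $\chi^A_{a,b}$ then yields the asserted isomorphism of morphisms.

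To prove the first of these compatibilities I would reinspect the construction of the isomorphism $\Bk_{A_c}\xrightarrow{\ \sim\ }\Bk_A\circ\Bk_{\GD_c}$ in the proof of Lemma \ref{commutative between radon and cmp} and check that it is natural in the parameter $c$. That isomorphism is a composite of three arrows: the identification $p_{12}^{-1}\Bk_A\otimes p_{23}^{-1}\Bk_{\GD_c}\simeq\Bk_{A'_c}$ (built from Propositions \ref{cut-off_pullback}, \ref{cut-off_tensor}, \ref{cut-off_commutative}), the properness identification $Rp_{13!}\Bk_{A'_c}\simeq Rp_{13*}\Bk_{A'_c}$, and the morphism $\Bk_{A_c}\to Rp_{13*}\Bk_{A'_c}$ adjoint to the restriction $p_{13}^{-1}\Bk_{A_c}=\Bk_{p_{13}^{-1}(A_c)}\to\Bk_{A'_c}$ (legitimate since $A'_c\subset p_{13}^{-1}(A_c)$). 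The inclusions $\GD_b\subset\GD_a$, $A'_b\subset A'_a$ and $A_b\subset A_a$ induce canonical restriction morphisms of constant sheaves, and these are preserved by $p_{ij}^{-1}$ and by $\otimes$ (Propositions \ref{cut-off_pullback}, \ref{cut-off_tensor}), are carried along by the functors $Rp_{13!}$ and $Rp_{13*}$, and are compatible with the canonical transformation $Rp_{13!}\to Rp_{13*}$; so each of the three arrows fits into a commutative square relating $c=a$ and $c=b$. Stacking these squares gives
\[
\xymatrix{
\Bk_{A_a}\ar[r]^{\chi^A_{a,b}}\ar[d]_{\wr}&\Bk_{A_b}\ar[d]^{\wr}\\
\Bk_A\circ\Bk_{\GD_a}\ar[r]_{\Bk_A\circ\chi^\GD_{a,b}}&\Bk_A\circ\Bk_{\GD_b}\,,
}
\]
and the verbatim argument with $Z_c$ and $\tcp$ in place of $\GD_c$ and $\circ$ gives the companion square featuring $\chi^Z_{a,b}\tcp\Bk_A$. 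Pasting the two squares along $\chi^A_{a,b}$ proves the lemma.

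The one step that is not purely formal is the arrow $\Bk_{A_c}\to Rp_{13*}\Bk_{A'_c}$: Lemma \ref{commutative between radon and cmp} shows it is an isomorphism only by a stalkwise computation exploiting that $A'_c\cap p_{13}^{-1}(y,t,x)$ is contractible, so its naturality in $c$ does not drop out of a universal property and must be checked by hand. This is harmless, since the two composites $\Bk_{A_a}\to Rp_{13*}\Bk_{A'_a}\to Rp_{13*}\Bk_{A'_b}$ and $\Bk_{A_a}\to\Bk_{A_b}\to Rp_{13*}\Bk_{A'_b}$ are adjoint to one and the same restriction morphism $p_{13}^{-1}\Bk_{A_a}\to\Bk_{A'_b}$, by transitivity of restrictions of constant sheaves on closed subsets together with Proposition \ref{cut-off_pullback}; alternatively one compares them on stalks, where everything is the evident identity of one-dimensional vector spaces. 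A shortcut that avoids tracking the construction step by step is to compute $\RHom_{D^b}(\Bk_{A_a},\Bk_{A_b})\simeq R\GG(A_b;\Bk_{A_b})$, whose degree-zero part is $\Bk$ since $A_b$ is connected; hence $\Hom_{D^b}(\Bk_{A_a},\Bk_{A_b})$ is one-dimensional, both $\Bk_A\circ\chi^\GD_{a,b}$ and $\chi^Z_{a,b}\tcp\Bk_A$ are scalar multiples of $\chi^A_{a,b}$, and evaluating on a stalk over an interior point of $A_b$ forces each scalar to be $1$. In either approach the substance of the proof is just the remark that every morphism in play is a canonical restriction morphism of a constant sheaf, and such morphisms are compatible with $f^{-1}$, $\otimes$, $Rf_!$ and $Rf_*$.
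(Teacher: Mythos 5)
Your main argument — checking that each step of the isomorphism constructed in Lemma \ref{commutative between radon and cmp} is compatible with the canonical restriction morphisms induced by the closed inclusions $\GD_b\subset\GD_a$, $A'_b\subset A'_a$, $A_b\subset A_a$ and $Z_b\subset Z_a$, by adjunction and transitivity of restrictions — is precisely what the paper's one-sentence proof (``by constructions of canonical morphisms \dots immediately'') is gesturing at, so you have given a correct and fully explicit account of the same approach. Your dimension-count shortcut, identifying $\Hom(\Bk_{A_a},\Bk_{A_b})\simeq H^0(A_b;\Bk)\simeq\Bk$ and normalizing on a stalk, is a valid alternative the paper does not use.
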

		\begin{proof}
			By constructions of canonical morphisms $\chi^\GD_{a, b}$ and $\chi^Z_{a, b}$, we can prove the statement immediately. 
		\end{proof}
		
		%d_I\leq d_C
		
		The following theorem is the same as Theorem \ref{main theorem}. 
		\begin{theorem}
		\label{re:main theorem}
			The Radon transform induces an isometric functor between $d_{LC}$ and $d_{LI}$:
			\begin{equation}
				\GF_{\Bk_A}:(D^b(\Rbb^n, \dot{T}^*\Rbb^n), d_{LC})\to (D^b(S^{n-1}\times\Rbb, T^{*, +}(S^{n-1}\times\Rbb)), d_{LI}). 
			\end{equation}
			Namely, for any $F, G\in D^b(\Rbb^n, \dot{T}^*\Rbb^n)$, $d_{LC}(F, G)=d_{LI}(\GF_{\Bk_A}(F), \GF_{\Bk_A}(G))$. 
		\end{theorem}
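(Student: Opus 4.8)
The strategy is to show that the Radon transform intertwines the convolution functor $K_a*(-)$ on $D^b(\Rbb^n,\dot T^*\Rbb^n)$ with the tilde convolution functor $L_a\tcv(-)$ on $D^b(S^{n-1}\times\Rbb,T^{*,+}(S^{n-1}\times\Rbb))$, together with their structure morphisms, and then to transport $a$-interleavings back and forth using that $\GF_{\Bk_A}$ is an equivalence of these localized categories (\cite[Theorem 3.3]{Gao17}).

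First I would establish, for each $a\geq0$, a natural isomorphism of functors $\GF_{\Bk_A}\circ(K_a*(-))\simeq(L_a\tcv(-))\circ\GF_{\Bk_A}$ between the two localized categories. For $F\in D^b(\Rbb^n)$ it is given by the chain
\begin{align*}
\GF_{\Bk_A}(K_a*F)
&\simeq\Bk_A\circ(\Bk_{\GD_a}\circ F)\simeq(\Bk_A\circ\Bk_{\GD_a})\circ F\simeq\Bk_{A_a}\circ F\\
&\simeq(\Bk_{Z_a}\tcp\Bk_A)\circ F\simeq\Bk_{Z_a}\tcp(\Bk_A\circ F)\simeq L_a\tcv\GF_{\Bk_A}(F),
\end{align*}
where the first and last isomorphisms rewrite $K_a*(-)=\Bk_{\GD_a}\circ(-)$ and $L_a\tcv(-)=\Bk_{Z_a}\tcp(-)$ (Propositions \ref{conv and comp on V} and \ref{conv and comp on MV}), the second is the associativity of $\circ$ (Proposition \ref{associativity of composition}), the third and fourth are Lemma \ref{commutative between radon and cmp}, and the fifth is the associativity of $\tcp$ (Proposition \ref{associativity of tilde composition}, applied with $M=S^{n-1}$, $X_1=X_2=\Rbb$, $X_3=\Rbb^n$, $X_4=\{\pt\}$ and $K_{12}=\Bk_{Z_a}$, $K_{23}=\Bk_A$, $K_{34}=F$). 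All the isomorphisms involved are natural in $F$ (those coming from Lemma \ref{commutative between radon and cmp} do not involve $F$ at all), so the composite is natural; and since it is built from isomorphisms in $D^b$ it passes to the localized categories, on which $K_a*(-)$ and $L_a\tcv(-)$ are already defined (cf. Proposition \ref{microsupport and convolution}).

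Next I would check that this natural isomorphism is compatible with the transition morphisms. Under the identifications $K_a*(-)\simeq\Bk_{\GD_a}\circ(-)$ and $L_a\tcv(-)\simeq\Bk_{Z_a}\tcp(-)$, the morphism $\chi_{a,b}^K*F$ becomes $\chi_{a,b}^\GD\circ F$ and $\chi_{a,b}^L\tcv\GF_{\Bk_A}(F)$ becomes $\chi_{a,b}^Z\tcp(\Bk_A\circ F)$; applying $\GF_{\Bk_A}=\Bk_A\circ(-)$ and using the associativity isomorphisms, the required commutativity is exactly Lemma \ref{commutative between radon and chi} (both sides being carried to the canonical morphism $\Bk_{A_a}\to\Bk_{A_b}$ induced by the inclusion $A_b\subset A_a$, cf. Proposition \ref{short exact}). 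In the same spirit, using Propositions \ref{property of Ka} and \ref{property of La} the isomorphism of the previous step is compatible with the identifications $K_a*(K_a*F)\simeq K_{2a}*F$ and $L_a\tcv(L_a\tcv G)\simeq L_{2a}\tcv G$. Consequently $\GF_{\Bk_A}$ carries the interleaving diagrams \eqnref{a-iso-dia} for a pair $(F,G)$ precisely onto the diagrams \eqnref{a-int-dia} for $(\GF_{\Bk_A}(F),\GF_{\Bk_A}(G))$, and this correspondence is reversible.

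Finally I would conclude as follows. If $F,G\in D^b(\Rbb^n,\dot T^*\Rbb^n)$ are locally $a$-interleaved via $(f,g)$, then $(\GF_{\Bk_A}(f),\GF_{\Bk_A}(g))$, transported through the isomorphisms of the first two steps, is a local $a$-interleaving of $\GF_{\Bk_A}(F)$ and $\GF_{\Bk_A}(G)$, whence $d_{LI}(\GF_{\Bk_A}(F),\GF_{\Bk_A}(G))\leq d_{LC}(F,G)$. Conversely, a local $a$-interleaving $(\tilde f,\tilde g)$ of $\GF_{\Bk_A}(F)$ and $\GF_{\Bk_A}(G)$ yields, via the same isomorphisms, morphisms $\GF_{\Bk_A}(K_a*F)\to\GF_{\Bk_A}(G)$ and $\GF_{\Bk_A}(K_a*G)\to\GF_{\Bk_A}(F)$; since $\GF_{\Bk_A}$ is an equivalence of the localized categories, hence fully faithful, these lift uniquely to $f:K_a*F\to G$ and $g:K_a*G\to F$, and faithfulness forces the lifted diagrams to commute, so $F$ and $G$ are locally $a$-interleaved and $d_{LC}(F,G)\leq d_{LI}(\GF_{\Bk_A}(F),\GF_{\Bk_A}(G))$. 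Taking infima over $a$ gives $d_{LC}(F,G)=d_{LI}(\GF_{\Bk_A}(F),\GF_{\Bk_A}(G))$. The main obstacle is the bookkeeping in the second step: one must verify that the chain of isomorphisms built in the first step is genuinely compatible with the structure morphisms $\chi_{a,b}^K$, $\chi_{a,b}^L$ and with the iterated-convolution identifications, so that commutative triangles go to commutative triangles; this is not deep, but it requires chasing the isomorphisms of Lemmas \ref{commutative between radon and cmp} and \ref{commutative between radon and chi} through the associativity constraints for $\circ$ and $\tcp$ and checking that they fit together coherently.
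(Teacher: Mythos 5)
Your proof is correct and takes essentially the same approach as the paper: establish $\GF_{\Bk_A}(K_a*F)\simeq L_a\tcv\GF_{\Bk_A}(F)$ via Lemma~\ref{commutative between radon and cmp} together with the associativity of $\circ$ and $\tcp$, verify compatibility with the transition morphisms via Lemma~\ref{commutative between radon and chi}, and transport interleavings in both directions using that $\GF_{\Bk_A}$ is an equivalence of the localized categories. Your more explicit handling of the converse direction, lifting the interleaving morphisms and their commuting triangles back through full faithfulness, spells out what the paper compresses into the single remark that $\GF_{\Bk_A}$ is an equivalence.
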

		\begin{proof}
			Assume that $F$ and $G$ are locally $a$-interleaved. We show that $\GF_{\Bk_A}(F)$ and $\GF_{\Bk_A}(G)$ are locally $a$-interleaved. Applying the Radon transform $\GF_{\Bk_A}=\Bk_A\circ(-)$ to the diagram \eqnref{cmp-a-iso},  we have the following diagrams
			\begin{equation*}
			%\label{cmp-a-iso}
				\xymatrix{
					\Bk_A\circ (\Bk_{\GD_{2a}}\circ F)\ar[rr]%^{\chi_{2a,0}*F}
					\ar[dr]_{\Bk_A\circ (\Bk_{\GD_{a}}\circ f)\quad}&&\Bk_A\circ F\\
					&\Bk_A\circ (\Bk_{\GD_{a}}\circ G), \ar[ur]_{\Bk_A\circ g}\ar@{}[u]|{\circlearrowright}&	
				}
			\end{equation*}
			\begin{equation*}
				\xymatrix{
					&\Bk_A\circ (\Bk_{\GD_{a}}\circ F)\ar[dr]^{\Bk_A\circ f}\ar@{}[d]|{\circlearrowright}&\\
					\Bk_A\circ (\Bk_{\GD_{2a}}\circ G)\ar[rr]%^{\chi_{2a,0}*G
					\ar[ur]^{\Bk_A\circ (\Bk_{\GD_{a}}\circ g)\quad}&&\Bk_A\circ G. 
				}
			\end{equation*}
			By Lemma \ref{commutative between radon and cmp}, Proposition \ref{associativity of composition} and Proposition \ref{associativity of tilde composition}, we have
			\[
				\Bk_A\circ(\Bk_{\GD_a}\circ F)\simeq (\Bk_A\circ \Bk_{\GD_a})\circ F\simeq (\Bk_{Z_a}\tcp \Bk_A)\circ F\simeq \Bk_{Z_a}\tcp(\Bk_A\circ F). 
			\]
			By Lemma \ref{commutative between radon and chi}, we obtain locally $a$-interleaved between $\GF_{\Bk_A}(F)$ and $\GF_{\Bk_A}(G)$. Therefore an inequality $d_{LI}(\GF_{\Bk_A}(F), \GF_{\Bk_A}(G))\leq d_{LC}(F, G)$ holds. 
		
			Conversely, since $\GF_{\Bk_A}$ is an equivalence of categories, $F$ and $G$ are locally $a$-interleaved if $\GF_{\Bk_A}(F)$ and $\GF_{\Bk_A}(G)$ are locally $a$-interleaved. Therefore, $d_{LI}(\GF_{\Bk_A}(F), \GF_{\Bk_A}(G))\geq d_{LC}(F, G)$ holds. 
		\end{proof}
		
	\subsection{Example and conjecture about distances}
	\label{conjecture}\phantom{a}
		This section deals with the closedness of distances $d_C$ and $d_I$ for constructible sheaves (see \cite[Definition 8.4.3.]{KS90}). For a manifold $X$, we revise $D^b(X)$ as the bounded derived category of sheaves of modules whose coefficients are rings. 
		\begin{conjecture}
		\label{conj conv dis}
			For constructible sheaves $F, G$ in $D^b(\Rbb^n)$, whose coefficients are not necessarily fields, $F\simeq G$ if $d_C(F, G)=0$. 
		\end{conjecture}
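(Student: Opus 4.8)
The plan is to run the hypothesis $d_C(F,G)=0$ through the Radon isometry theorem, solve the resulting closedness problem on $S^{n-1}\times\Rbb$ — where the relevant distance is one–directional — and then lift the answer back. First, $d_C(F,G)=0$ forces $d_{LC}(F,G)=0$ by Proposition \ref{inequality of distance}, hence $d_{LI}\big(\GF_{\Bk_A}(F),\GF_{\Bk_A}(G)\big)=0$ by Theorem \ref{re:main theorem}. Next I would record that $\GF_{\Bk_A}$ preserves constructibility: $\Bk_A$ is $\Rbb$–constructible, so $\Bk_A\otimes\pi_2^{-1}F$ is constructible, and the supports being subanalytic, the constructibility theorem of \cite{KS90} makes $R\pi_{1!}(\Bk_A\otimes\pi_2^{-1}F)=\GF_{\Bk_A}(F)$ constructible (and likewise for $G$). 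It therefore suffices to establish: (a) for constructible $F',G'$ on $S^{n-1}\times\Rbb$, $d_{LI}(F',G')=0$ implies $F'\simeq G'$ in $D^b(S^{n-1}\times\Rbb,T^{*,+}(S^{n-1}\times\Rbb))$; and (b) an isomorphism $F\simeq G$ in $D^b(\Rbb^n,\dot T^*\Rbb^n)$ between constructible sheaves with $d_C(F,G)<\infty$ lifts to an isomorphism in $D^b(\Rbb^n)$. Granting (a) and (b): apply (a) with $F'=\GF_{\Bk_A}(F)$, $G'=\GF_{\Bk_A}(G)$, transport the resulting localized isomorphism back along the equivalence $\GF_{\Bk_A}$ of \cite[Theorem 3.3]{Gao17}, and conclude with (b).

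For (a), I would exploit the decisive feature stressed throughout the paper: $L_a\tcv(-)$, equivalently $\Bk_{Z_a}\tcp(-)$, translates only in the $\Rbb$–factor, fibrewise over $S^{n-1}$. Concretely, I would pass via the $\gamma$–sheaf correspondence of \cite{KS18} and \cite{BP21} (applied to sheaves microsupported in $T^{*,+}(S^{n-1}\times\Rbb)$) to a \emph{tame} one–parameter persistence object over $\Rbb$ valued in the bounded derived category of constructible sheaves on $S^{n-1}$; under this correspondence $d_{LI}$ becomes the ordinary interleaving distance of such objects. Constructibility in the $\Rbb$–direction yields finitely many critical values, and the category of values is Krull--Schmidt over any Noetherian ring; these two finiteness inputs should let one replay the closedness argument of \cite[Theorem 6.3]{BG22}, but using the discreteness of the combinatorial data — the stratification of $\Rbb$ together with the finitely many value objects up to isomorphism — in place of an interval decomposition over a field. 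An $a$–interleaving for every $a>0$ then forces the two data to coincide, giving $F'\simeq G'$; this coefficient–free replacement of the barcode machinery is exactly what the non–field coefficients of the conjecture demand.

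For (b), an isomorphism in $D^b(\Rbb^n,\dot T^*\Rbb^n)$ is realized by a roof $F\xleftarrow{s}H\xrightarrow{f}G$ whose cones $C_s,C_f$ have microsupport in $0_{\Rbb^n}$, i.e. are locally constant; since $F$ and $G$ are constructible one checks $C_s,C_f$ have finitely generated cohomology, hence, on the contractible space $\Rbb^n$, are finite direct sums of shifts of constant sheaves $M_{\Rbb^n}$ ($M$ a finitely generated module). Thus $F$ and $G$ become isomorphic after iterated extensions by such shifted constant sheaves. A direct computation gives $K_a*M_{\Rbb^n}\simeq M_{\Rbb^n}$ for all $a\geq0$, with $\chi_{2a,0}^K*M_{\Rbb^n}$ an isomorphism, whence $d_C(M_{\Rbb^n}[j],0)=\infty$ for $M\neq0$. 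Since $d_C(F,G)<\infty$, these constant contributions cannot actually occur — made precise by inducting on the amplitude of $C_s,C_f$ and peeling off one constant stratum at a time, each time using finiteness of $d_C$ to kill it — so the roof is an honest isomorphism $F\simeq G$ in $D^b(\Rbb^n)$.

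The main obstacle will be (a) in the generality of arbitrary (e.g.\ $\Zbb$–) coefficients: a coefficient–free closedness theorem for tame one–parameter persistence objects valued in a Krull--Schmidt category does not seem to be available off the shelf, and one must control the interplay of the two finiteness directions — in particular, arranging that the $a$–interleaving morphisms produced as $a\to0$ can be chosen compatibly enough to stabilize into a genuine isomorphism. A secondary difficulty is the bookkeeping in (b) when the constant pieces sit in non–split extensions rather than as summands: the amplitude induction must be carried out while tracking microsupports, so that splitting off a constant stratum never reintroduces a $\dot T^*\Rbb^n$–component.
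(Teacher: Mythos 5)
This statement is labeled a \emph{conjecture} in the paper, and the paper does not prove it: the text immediately following it says Guillermou--Viterbo \cite{GV24} settled the field-coefficient case, and that the isometry Theorem~\ref{re:main theorem} relates the present conjecture to Conjecture~\ref{conj int dis} on $S^{n-1}\times\Rbb$, which the author also leaves open. So there is no ``paper's own proof'' to compare against; what you have written is a research sketch, and it should be judged as such.

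Your reduction steps (1)--(2) are correct as far as they go: $d_C=0\Rightarrow d_{LC}=0$ by Proposition~\ref{inequality of distance}, and the isometry theorem transports this to $d_{LI}\big(\GF_{\Bk_A}(F),\GF_{\Bk_A}(G)\big)=0$. This is exactly the reduction the paper itself advocates. But the argument does not close, and the gaps are not cosmetic. First, your step (a) is a localized-category statement about $d_{LI}$, whereas the paper's Conjecture~\ref{conj int dis} is about $d_I$ and isomorphism in $D^b(S^{n-1}\times\Rbb)$; neither implies the other without further work, because $d_{LI}\le d_I$ and passage between $D^b$ and its localization loses information. So (a) is not a citation of the paper's conjecture; it is a new, independent unproved claim, and you acknowledge yourself that the key ingredient (a coefficient-free closedness theorem for tame one-parameter objects valued in a Krull--Schmidt category) is not available. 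Over $\Zbb$ or a general Noetherian ring, the bounded derived category of constructible sheaves on $S^{n-1}$ is not Krull--Schmidt, so even the premise of your proposed replacement for the barcode machinery fails. Second, the ``$\gamma$-sheaf correspondence'' of \cite{KS18} and \cite{BP21} is stated for sheaves on a vector space $\Vbb$ with a cone $\Gg$, not for $S^{n-1}\times\Rbb$; the analogue you invoke (reducing $d_{LI}$ on $T^{*,+}(S^{n-1}\times\Rbb)$-microsupported sheaves to an interleaving distance of $\Rbb$-persistence objects valued in $D^b_c(S^{n-1})$) is plausible but would itself need to be proved, and you give no argument. Third, your step (b) needs $d_C(F,G)<\infty$, but the hypothesis of the conjecture only supplies $d_C(F,G)=0$, which does of course imply finiteness; the subtler issue is that the localized isomorphism gives a roof whose cones are constant, and your peeling-off induction must rule out nontrivial extensions by shifted constant sheaves. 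You flag this as a ``secondary difficulty,'' but the simple finiteness argument ($d_C(M_{\Rbb^n}[j],0)=\infty$) only controls direct summands, not pieces sitting in non-split triangles; you would need a genuine devissage lemma here, not just finiteness of the distance. In short: your outline follows the direction the paper suggests, but as a proof it is missing precisely the content of the open problem, plus two technical bridges (the $S^{n-1}\times\Rbb$ version of the $\gamma$-sheaf correspondence, and the localized-to-global lifting) that need to be established.
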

		Guillermou and Viterbo \cite{GV24} proved this conjecture for sheaves of $\Bk$-vector spaces. Since the Radon transform changes multi-directional movements to one-directional movements and keeps localized extended pseudo-distances, we expect that the property of the convolution distance $d_C$ on $D^b(\Rbb^n)$ is described by $d_I$ on $D^b(S^{n-1}\times\Rbb)$. In order to see behavior of $K_a*(-)$ and the Radon transform, let us consider an example below. 
		
		\begin{example}
		We recall the example in Section.\ref{rwmr}. 
		We use $S, I, J$ and $T$ to denote the subsets
			\begin{align*}
				S&:=\{(x_1, x_2)\in\Rbb^2; -1\leq x_1\leq 1, -1\leq x_2\leq 1\}, \\
				I&:=\{(x_1, 1)\in\Rbb^2; -1\leq x_1\leq 1\}, \\
				J&:=\{(x_1, -1)\in\Rbb^2; -1\leq x_1\leq 1\}, \\
				T&:=I\sqcup J
			\end{align*}
			of $\Rbb^2$ respectively. We set $Z=S\bs T$. By Proposition \ref{distinguish triangle}, we have
			\begin{equation}
			\label{dt_example}
				\Bk_{Z}\to \Bk_{S}\to \Bk_I\oplus\Bk_J\overset{+1}{\to}
			\end{equation}
			in $D^b(\Rbb^2)$. We note that $\Bk_T\simeq \Bk_I\oplus\Bk_J$. For each $a\geq0$, we set $S_a:=S+B_a=\{x+x^\prime \in\Rbb^2; x\in S, x^\prime\in B_a\}$. We remind that $B_a$ is a closed ball in $\Rbb^2$. 
			Similarly, we set $I_a:=I+B_a$ and $J_a:=J+B_a$. Let us define the subsets
			\begin{align*}
				T_a:= I_a\cup J_a, \quad
				Z^\prime_a:= S_a\bs T_a, \quad
				Z^{\prime\prime}_a:= I_a\cap J_a
			\end{align*}
			of $\Rbb^2$ respectively. Applying the functor $K_a*(-)$ to (\ref{dt_example}), we obtain the distinguish triangle in Figure 4. 

		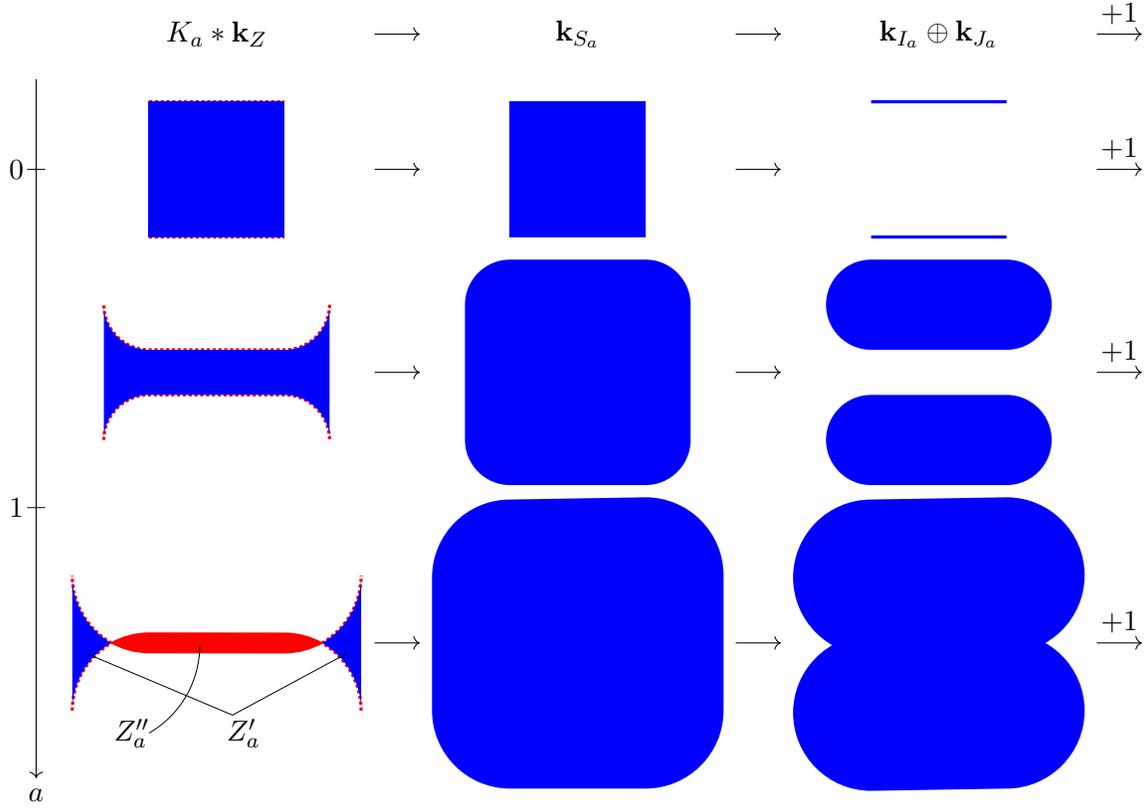
\begin{figure}[htbp]
		%\label{movement_example}
			\centering
			\begin{tikzpicture}[scale=0.3]
				\coordinate[label=left:$0$](0)at(-8.1, 0);
				\coordinate[label=left:$1$](1)at(-8.1, -15);
				\coordinate[label=below:$a$](a)at(-8, -27);
				\coordinate[label=center:$K_a*\Bk_Z$](KF)at(0, 6);
				\coordinate[label=center:$\Bk_{S_a}$](Sa)at(16, 6);
				\coordinate[label=center:$\Bk_{I_a}\oplus\Bk_{J_a}$](IaJa)at(32, 6);
				\coordinate[label=above:$+1$](+1)at(40, 6);
				\coordinate[label=above:$+1$](+10)at(40, 0);
				\coordinate[label=above:$+1$](+11)at(40, -9);
				\coordinate[label=above:$+1$](+12)at(40, -21);
				%\coordinate[label=left:$\GF_{\Bk_A}(K_a*\Bk_Z)$](RKF)at(-4, -8);
				\coordinate[label=right:$Z^{\prime\prime}_a$](Z’’a)at(-5,-25);
				\coordinate[label=right:$Z^\prime_a$](Z’a)at(0,-25);
				%矢印
				\draw[->] (7, 6) -- (9, 6);
				\draw[->] (23, 6) -- (25, 6);
				\draw[->] (39, 6) -- (41, 6);
				\draw[->] (7, 0) -- (9, 0);
				\draw[->] (23, 0) -- (25, 0);
				\draw[->] (39, 0) -- (41, 0);
				\draw[->] (7, -9) -- (9, -9);
				\draw[->] (23, -9) -- (25, -9);
				\draw[->] (39, -9) -- (41, -9);
				\draw[->] (7, -21) -- (9, -21);
				\draw[->] (23, -21) -- (25, -21);
				\draw[->] (39, -21) -- (41, -21);
				\draw[->] (-8, 4)--(-8, -27); 
				\draw[very thin] (-8.4, 0) -- (-7.6, 0);
				\draw[very thin] (-8.4, -15) -- (-7.6, -15);
				%Z
				\draw[red, very thick, densely  dotted] (-3, 3) -- (3, 3);
				\draw[red, very thick, densely  dotted] (-3, -3) -- (3, -3);
				\draw[red, very thick, densely  dotted] (-5, -12)arc(180:90:2)--(3, -10)arc(90:0:2);
				\draw[red, very thick, densely  dotted] (5, -6)arc(0:-90:2)--(-3, -8)arc(270:180:2);
				\draw[red, very thick, densely  dotted] (-6.4, -24)arc(180:120:3.46)arc(240:180:3.46);
				\draw[red, very thick, densely  dotted] (6.4, -24)arc(0:60:3.46)arc(-60:0:3.46);
				%S
				\fill[blue, opacity=0.4] (-3, 3)--(-3, -3)--(3, -3)--(3, 3)--cycle;
				\fill[blue, opacity=0.4] (-3, -8)arc(270:180:2)--(-5, -12)arc(180:90:2)--(3, -10)arc(90:0:2)--(5, -6)arc(0:-90:2)--cycle;
				\fill[blue, opacity=0.4] (-6.4, -24)arc(180:120:3.46)arc(240:180:3.46)--cycle;
				\fill[blue, opacity=0.4] (6.4, -24)arc(0:60:3.46)arc(-60:0:3.46)--cycle;
				\fill[red, opacity=0.4] (-3, -21.46)arc(270:240:3.46)arc(120:90:3.46)--(3, -20.54)arc(90:60:3.46)arc(-60:-90:3.46)--cycle;
				\fill[blue, opacity=0.4] (13, 3)--(13, -3)--(19, -3)--(19, 3)--cycle;
				\fill[blue, opacity=0.4] (13, -4)arc(90:180:2)--(11, -12)arc(180:270:2)--(19, -14)arc(270:360:2)--(21, -6)arc(0:90:2)--cycle;
				\fill[blue, opacity=0.4] (13, -14.64)arc(90:180:3.46)--(9.54, -24)arc(180:270:3.46)--(19, -27.46)arc(270:360:3.46)--(22.46, -18)arc(0:90:3.46)--cycle;
				%T
				\draw[blue, very thick] (29, 3) -- (35, 3);
				\draw[blue, very thick] (29, -3) -- (35, -3);
				\fill[blue, opacity=0.4] (29, -4)arc(90:270:2)--(35, -8)arc(270:450:2)--cycle;
				\fill[blue, opacity=0.4] (29, -10)arc(90:270:2)--(35, -14)arc(270:450:2)--cycle;
				\fill[blue, opacity=0.4] (29, -14.64)arc(90:270:3.46)--(35, -21.46)arc(270:450:3.46)--cycle;
				\fill[blue, opacity=0.4] (29, -20.64)arc(90:270:3.46)--(35, -27.46)arc(270:450:3.46)--cycle;
				\draw[very thin] (-3, -25) arc (-60:0:4.5); 
				\draw[very thin] (-5.7, -21.5)--(0.7, -24.2);
				\draw[very thin] (5.7, -21.5)--(0.7, -24.2);
			\end{tikzpicture}
			\caption{Applying $K_a*(-)$, we obtain the distinguish triangle. }
		\end{figure}
			Then we have
			\[
				H^0(K_a*\Bk_Z)\simeq \Bk_{Z^\prime_a},\quad 
				H^1(K_a*\Bk_Z)\simeq
				\begin{cases}
					0&\text{if }0\leq a<1, \\
					\Bk_{Z^{\prime\prime}_a}&\text{if }1\leq a.
				\end{cases}
			\]

			Applying the Radon transform, $\GF_{\Bk_A}(K_a*\Bk_Z)$ is simpler than $K_a*\Bk_Z$. For any $a\geq0$, there is a continuous function $\Gvf:S^1\to\Rbb$ such that $\GF_{\Bk_A}(K_a*\Bk_Z)\simeq\Bk_{\{(y, t)\in S^1\times\Rbb; \Gvf(y)\leq t+a\}}[-1]$(see Figure 5). 
		
		\begin{figure}[htbp]
		%\label{movement_example}
			\centering
			\begin{tikzpicture}[scale=0.3]
				\coordinate[label=below:$0$](0)at(0, -16.2);
				\coordinate[label=below:$1$](1)at(18, -16.2);
				\coordinate[label=right:$a$](a)at(31, -16);
				\coordinate[label=left:$K_a*\Bk_Z$](KF)at(-4, 0);
				\coordinate[label=left:$\GF_{\Bk_A}(K_a*\Bk_Z)$](RKF)at(-4, -8);
				\coordinate[label=right:$Z^{\prime\prime}_a$](Z’a)at(19,4);
				\coordinate[label=right:$Z^\prime_a$](Z’’a)at(24.9,4);
				\draw[very thin] (0, -15.6) -- (0, -16.4);
				\draw[very thin] (18, -15.6) -- (18, -16.4);
				\draw[->] (3.5, 0) -- (4.5, 0);
				\draw[->] (15.5, 0) -- (16.5, 0);
				\draw[red, very thick, densely  dotted] (-3, 3) -- (3, 3);
				\draw[red, very thick, densely  dotted] (-3, -3) -- (3, -3);
				\draw[red, very thick, densely  dotted] (5, -3)arc(180:90:2)--(13, -1)arc(90:0:2);
				\draw[red, very thick, densely  dotted] (15, 3)arc(0:-90:2)--(7, 1)arc(270:180:2);
				\draw[red, very thick, densely  dotted] (17.6, -3)arc(180:120:3.46)arc(240:180:3.46);
				\draw[red, very thick, densely  dotted] (30.4, -3)arc(0:60:3.46)arc(-60:0:3.46);
				\draw[->] (4, -9) -- (6, -9);
				\draw[->] (16, -9) -- (18, -9);
				\draw[->] (-4, -16) -- (31, -16);
				\draw[very thin] (21, 4) arc (60:0:4.5); 
				\draw[very thin] (18, 0)--(25, 4);
				\draw[very thin] (30, 0)--(27.2, 4);
				\fill[blue, opacity=0.4] (-3, 3)--(-3, -3)--(3, -3)--(3, 3)--cycle;
				\fill[blue, opacity=0.4] (7, 1)arc(270:180:2)--(5, -3)arc(180:90:2)--(13, -1)arc(90:0:2)--(15, 3)arc(0:-90:2)--cycle;
				\fill[blue, opacity=0.4] (17.6, -3)arc(180:120:3.46)arc(240:180:3.46)--cycle;
				\fill[blue, opacity=0.4] (30.4, -3)arc(0:60:3.46)arc(-60:0:3.46)--cycle;
				\fill[red, opacity=0.4] (21, -0.46)arc(270:240:3.46)arc(120:90:3.46)--(27, 0.46)arc(90:60:3.46)arc(-60:-90:3.46)--cycle;
				%\fill[blue, opacity=0.4] (-3, 2)arc(135:225:2.8284)--(-2, -3)arc(225:315:2.8284)--(3, -2)arc(-45:45:2.8284)--(2, 3)arc(45:135:2.8284)--cycle;
				%\fill[gray, opacity=0.4] (14, -7) circle (3mm);
				%\fill[blue, opacity=0.4] (8, -7) circle (14.142mm);
				%\fill[blue, opacity=0.4] (0, -7) circle (28.284mm);
				\foreach\a in {-3, 7, 21}{
				\coordinate[label=below:$\Rbb$](Rbb1)at(\a, -4);
				\coordinate[label=below:$\Rbb$](Rbb1)at(\a+6, -4);
				\coordinate[label=right:$S^1$](S1)at(\a+6.5, -10);
				\draw[very thin] (\a, -14) -- (\a, -6);
				\draw[very thin] (\a +6, -14) -- (\a +6, -6);
				\draw[thin] (\a, -10) -- (\a+6, -10);
				\fill[red, opacity=0.4]
					plot[domain=0:90,smooth]({\a+(\x)/60},{2.82*sin((\x)-45)-10-(\a>6)-(\a>20)})
					--plot[domain=90:180,smooth]({\a+(\x)/60},{2.82*sin((\x)+45)-10-(\a>6)-(\a>20)}) 
					--plot[domain=180:270,smooth]({\a+(\x)/60},{2.82*sin((\x)+135)-10-(\a>6)-(\a>20)}) 
					--plot[domain=270:360,smooth]({\a+(\x)/60},{2.82*sin((\x)+225)-10-(\a>6)-(\a>20)})
					--(\a+6, -6.4)--(\a, -6.4)--cycle; 
				}
			\end{tikzpicture}

				\caption{This figure is the same as Figure 1 in Section.\ref{rwmr} . For each $a\geq0$, the upper shape is $Z^\prime_a$ or $Z^\prime_a \cup Z^{\prime\prime}_a$ and the lower shape is $\{(y, t)\in S^1\times\Rbb; \Gvf(y)\leq t+a\}$. Since $S^1$ is a quotient space of the closed interval $[0, 1]$ of $\Rbb$ by identifying its endpoints $\{0, 1\}$, we show $\{(y, t)\in S^1\times\Rbb; \Gvf(y)\leq t+a\}$ as a subset of $[0, 1]\times\Rbb$ on this figure. }
			\end{figure}
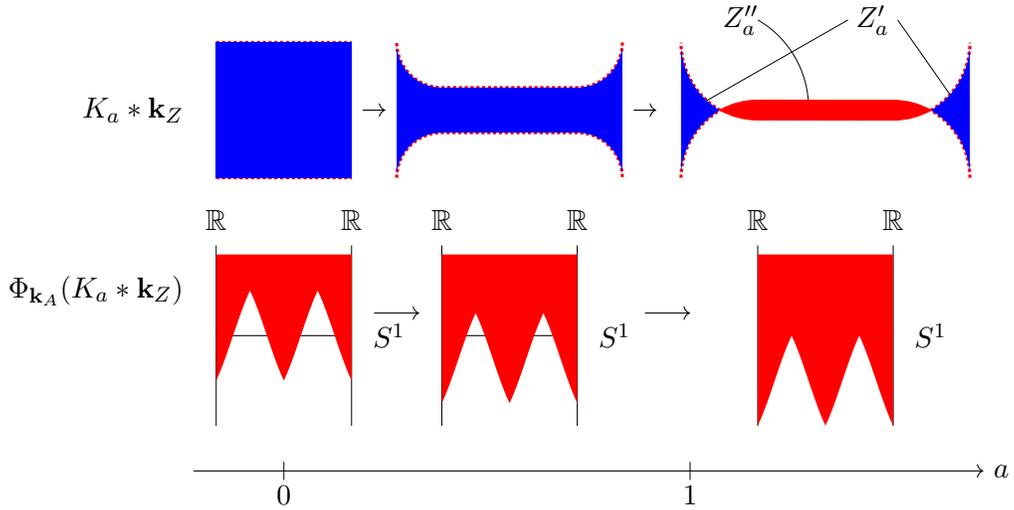

			Increasing $a\geq0$, $Z^\prime_a$ is separated into two connected components and $H^1(K_a*\Bk_{Z})$ appears. On the other hand, increasing $a\geq0$, $\{(y, t)\in S^1\times\Rbb; \Gvf(y)\leq t+a\}$ just moves in the direction of $\Rbb$. Then by Lemma \ref{commutative between radon and cmp}, the movements given by the functor $K_a*(-)$ can be replaced with the tractable movements by the functor $L_a\tcv(-)$. 
		\end{example}

%		When $n=1$, this problem is already solved by \cite{BG22}. %If the following lemma is proved, the closedness of $d_C$ on constructible sheaves on $\Rbb^n$. 
%		\begin{proposition}\cite[Exercise VIII.7.]{KS90}
%		\label{qct-constructible}
%			Suppose $K\in D^b_{\Rbb_c}(X\times Y)$ is a quantized contact transform from $(X, \GO_X)$ to $(Y, \GO_Y)$. Then
%			\begin{equation}
%			 	\GF_K:D^b_{\Rbb_c}(X; \GO_X)\to D^b_{\Rbb_c}(Y; \GO_Y)
%			\end{equation}
%			is an equivalence of categories. 
%		\end{proposition}

		Our main theorem \ref{re:main theorem} gives the following conjecture from Conjecture \ref{conj conv dis}. 
		\begin{conjecture}
		\label{conj int dis}
			For constructible sheaves $F, G$ in $D^b(S^{n-1}\times\Rbb)$, whose coefficients are not necessarily fields, $F\simeq G$ if $d_I(F, G)=0$. 
		\end{conjecture}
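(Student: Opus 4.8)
The plan is to deduce Conjecture \ref{conj int dis} from Conjecture \ref{conj conv dis} — equivalently, from the Guillermou--Viterbo closedness theorem on $\Rbb^n$ — by transporting the hypothesis through the Radon transform and comparing the two distances via the isometry of Theorem \ref{re:main theorem}. Let $F, G \in D^b(S^{n-1}\times\Rbb)$ be constructible with $d_I(F, G) = 0$. Applying Proposition \ref{inequality of distance}(ii) with $M = S^{n-1}$, $\Vbb = \Rbb$ and $\Gg = [0, \infty)$ gives $d_{LI}(F, G) = 0$ in the localized category $D^b(S^{n-1}\times\Rbb, T^{*, +}(S^{n-1}\times\Rbb))$.

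Next I would move the pair $(F, G)$ to $\Rbb^n$. Since $\GF_{\Bk_A}$ is an equivalence of localized categories, there are $\tilde F, \tilde G \in D^b(\Rbb^n)$ with $\GF_{\Bk_A}(\tilde F) \simeq F$ and $\GF_{\Bk_A}(\tilde G) \simeq G$ in $D^b(S^{n-1}\times\Rbb, T^{*, +}(S^{n-1}\times\Rbb))$; moreover $\tilde F$ and $\tilde G$ may be taken constructible, because the quasi-inverse of $\GF_{\Bk_A}$ is again a composition functor with a cohomologically constructible kernel (see Proposition \ref{replacement}), and composition with such a kernel preserves constructibility. By the isometry of Theorem \ref{re:main theorem},
\[
	d_{LC}(\tilde F, \tilde G) = d_{LI}\!\left(\GF_{\Bk_A}(\tilde F), \GF_{\Bk_A}(\tilde G)\right) = d_{LI}(F, G) = 0 .
\]
The problem is thereby reduced to showing that $d_{LC}(\tilde F, \tilde G) = 0$ for constructible $\tilde F, \tilde G$ forces $\tilde F \simeq \tilde G$ in $D^b(\Rbb^n, \dot{T}^*\Rbb^n)$; pushing that isomorphism back through the equivalence $\GF_{\Bk_A}$ would yield $F \simeq G$ in $D^b(S^{n-1}\times\Rbb, T^{*, +}(S^{n-1}\times\Rbb))$, after which one must upgrade it to a genuine isomorphism in $D^b(S^{n-1}\times\Rbb)$.

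The hard part is this last reduction, and the obstacle is the constant-sheaf ambiguity inherent in the Radon equivalence. Passing to $D^b(\Rbb^n, \dot{T}^*\Rbb^n)$ kills the constant complexes, and $d_{LC}$ cannot detect them either — for instance $d_{LC}(\Bk_{\Rbb^n}, 0) = 0$ while $d_C(\Bk_{\Rbb^n}, 0) = \infty$ — so Conjecture \ref{conj conv dis} as stated does not literally give closedness of $d_{LC}$; and the same phenomenon on the $S^{n-1}\times\Rbb$ side means that an isomorphism in $D^b(S^{n-1}\times\Rbb, T^{*, +}(S^{n-1}\times\Rbb))$ is strictly weaker than one in $D^b(S^{n-1}\times\Rbb)$, since a half-space summand $\Bk_{S^{n-1}\times[a,\infty)}$ has microsupport disjoint from $T^{*, +}(S^{n-1}\times\Rbb)$ and so becomes invisible after localization. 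Closing the argument seems to require either (a) a refinement of Conjecture \ref{conj conv dis} — that for constructible $\tilde F, \tilde G$ on $\Rbb^n$ with $d_{LC}(\tilde F, \tilde G) = 0$ one can find constant complexes $L, L'$ with $d_C(\tilde F \oplus L, \tilde G \oplus L') = 0$, so that Guillermou--Viterbo applies and gives $\tilde F \simeq \tilde G$ in the localized category — or (b) a direct argument that the genuine hypothesis $d_I(F, G) = 0$ already excludes a half-space discrepancy between $F$ and $G$, thereby promoting the localized isomorphism to an honest one.

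I expect route (b) to be the more promising one. One would analyse the genuine $a$-interleavings $L_a \tcv F \to G$ and $L_a \tcv G \to F$ provided by $d_I(F, G) = 0$ and use the constructibility of $F$ and $G$ — in particular the finiteness of a subanalytic stratification adapted to the $\Rbb$-direction, along which constructible sheaves decompose into intervals as on $\Rbb$ — to rigidify these interleavings in the limit $a \to 0$ into an actual isomorphism, in the spirit of the closedness argument of Berkouk--Ginot. Carrying this out uniformly over $S^{n-1}$, where no global decomposition of constructible sheaves into indecomposables is available, is the principal technical hurdle.
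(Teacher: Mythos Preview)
The statement you are trying to prove is labelled a \emph{Conjecture} in the paper, and the paper offers no proof of it. The surrounding discussion says only that the main theorem relates this conjecture to Conjecture~\ref{conj conv dis}, and in fact the paper's suggested direction is the reverse of yours: since $L_a\tcv(-)$ gives simpler movements than $K_a*(-)$, the authors expect Conjecture~\ref{conj int dis} to be the \emph{easier} of the two and a potential tool toward Conjecture~\ref{conj conv dis}, not a consequence of it.

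Your proposal is therefore not a comparison against an existing proof but a sketch of a possible attack, and you have already diagnosed its essential gap yourself. The isometry of Theorem~\ref{re:main theorem} is between the \emph{localized} distances $d_{LC}$ and $d_{LI}$, not between $d_C$ and $d_I$. Passing through $D^b(\Rbb^n,\dot T^*\Rbb^n)$ and $D^b(S^{n-1}\times\Rbb, T^{*,+}(S^{n-1}\times\Rbb))$ discards exactly the information needed to conclude: objects with microsupport in the zero section on the $\Rbb^n$ side, and objects such as $\Bk_{S^{n-1}\times[a,\infty)}$ on the $S^{n-1}\times\Rbb$ side, become invisible after localization, yet they are certainly distinguished by $d_I$. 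So from $d_I(F,G)=0$ you obtain $d_{LI}(F,G)=0$, hence $d_{LC}(\tilde F,\tilde G)=0$, but neither Conjecture~\ref{conj conv dis} nor the Guillermou--Viterbo theorem controls $d_{LC}$; they speak about $d_C$. Your routes (a) and (b) are precisely the missing ingredients, and neither is supplied. In short, the proposal does not constitute a proof, and the paper does not claim one either.
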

		Since $L_a\tcv(-)$ gives simpler movements than $K_a*(-)$, it may be easier to solve this conjecture than Conjecture \ref{conj conv dis}.

	\newpage


\begin{thebibliography}{99}
			%\barxiv{BCB18}{BCB18}{Magnus Bakke Botnan and William Crawley-Boevey}{Decomposition of persistence modules}{https://arxiv.org/abs/1811.08946}
			%\bpaper{AENY23}{Hideto Asashiba, Emerson G. Escolar, Ken Nakashima and MichioYoshiwaki}{On approximation of 2D persistence modules by interval-decomposables}{Journal of Computational Algebra, Volumes 6--7}{100007}{2023}
			%\barxiv{AGL24}{Hideto Asashiba, Etienne Gauthier and Enhao Liu}{Interval Replacements of Persistence Modules}{https://arxiv.org/abs/2403.08308}{arXiv:2403.08308v1[math.RT]}
			%\barxiv{BCB18}{Magnus Bakke Botnan and William Crawley-Boevey}{Decomposition of persistence modules}{https://arxiv.org/abs/1811.08946}{arXiv:1811.08946v3 [math.RT]}
			\bpaper{Ber20}{Nicolas Berkouk}{Persistence and Sheaves: from Theory to Applications}{Ph.D. thesis}{Institut Polytechnique de Paris}{2020}
			\bpaper{BG22}{Nicolas Berkouk and Gr$\acute{\rm e}$gory Ginot}{A derived isometry theorem for sheaves}{Advances in Mathematics}{394, 108033}{2022}
			\bpaper{BP21}{Nicolas Berkouk, Fran\c{c}ois Petit}{Ephemeral persistence modules and distance comparison}{Algebraic \& Geometric topology, volume21}{pages 247--277}{2021}
			\barxiv{BP22}{Nicolas Berkouk, Fran\c{c}ois Petit}{Projected distances for multi-parameter persistence modules}{https://arxiv.org/abs/2206.08818}{arXiv:2206.08818v3 [math.AT]}
			%\bpaper{Cra15}{William Crawley-Boevey}{Decomposition of pointwise finite-dimensional persistence modules}{Journal of Algebra and Its Applications, Volume 14, No. 05}{1550066}{2015}
			\bpaper{Cur14}{Justin Michael Curry}{Sheaves, Cosheaves and applications}{Ph.D. thesis}{University of Pennsylvania}{2014}
			\bpaper{Lan18}{Claudia Landi}{The rank invariant stability via interleavings}{Research in computational topology}{Springer International Publishing}{2018}
			\barxiv{Gao17}{Honghao Gao}{Radon Transform for Sheaves}{https://arxiv.org/abs/1712.06453}{arXiv:1712.06453v2 [math.SG]}
			\barxiv{Gui16}{St$\acute{\rm e}$phane Guillermou}{The three cusps conjecture}{https://arxiv.org/abs/1603.07876}{arXiv:1603.07876v1 [math.SG]}
			\bpaper{GS14}{St$\acute{\rm e}$phane Guillermou, Pierre Schapira}{Microlocal Theory of Sheaves and Tamarkin\!’\!\!s Non Displaceability Theorem}{Lecture Notes of the Unione Matematica Italiana, Springer}{pages 43--85}{2014}
			\bpaper{GV24}{St$\acute{\rm e}$phane Guillermou, Claude Viterbo}{The singular support of sheaves is $\Gg$-isotoropic}{Geometric and functional analysis, Volume34}{pages1052--1113}{2024}
			\bbook{KS90}{Masaki Kashiwara, Pierre Schapira}{Sheaves on manifolds}{Grundlehren der mathematischen Wissenschaften, Volume 292}{Springer-Verlag, Berlin}{1990}
			\bpaper{KS18}{Masaki Kashiwara, Pierre Schapira}{Persistent homology and microlocal sheaf theory}{Journal of Applied and Computational Topology, Volume 2}{pages 83--113}{2018}
			\barxiv{LW15}{Michael Lesnick, Matthew Wright}{Interactive Visualization of 2-D Persistence Modules}{https://arxiv.org/abs/1512.00180}{arXiv:1512.00180v1 [math.AT]}
			%\bpaper{Mil23}{Ezra Miller}{Stratifications of real vector spaces from constructible sheaves with conical}{Journal of Applied and Computational Topology, Volume 7}{pages 473--489}{2023}
			\bpaper{PS23}{Fran\c{c}ois Petit, Pierre Schapira}{Thickening of the diagonal and interleaving distance}{Selecta Mathematica, Volume 29}{No. 70}{2023}
			\bpape{PSW24}{Fran\c{c}ois Petit, Pierre Schapira, Lukas Waas}{A property of the interleaving distance for sheaves}{Bulletin of the London mathematical Society, Volume56}{2024}
			\bpaper{Sch21}{Pierre Schapira}{Microlocal Analysis and Beyond}{New Spaces in Mathematics, Volume1}{No. 03}{2021}
			%\bpaper{BCB18}{Magnus Bakke Botnan and William Crawley-Boevey.}{Decomposition of persistence modules.}{Available at https://arxiv.org/abs/1811.08946.}{}{}{}{}
			%\bibitem[BCB18]Magnus Bakke Botnan and William Crawley-Boevey. Decomposition of persistence modules, . Available at https://arxiv.org/abs/1811.08946.
		\end{thebibliography}
\end{document}